\documentclass[oneside, 11pt,a4paper]{amsart}
               
\usepackage[italian,american]{babel}
\usepackage[dvipsnames,table]{xcolor}
\usepackage[utf8]{inputenc}
\usepackage[T1]{fontenc}
\usepackage{amsmath,amssymb,amsthm}
\usepackage{mathrsfs}
\usepackage[none]{hyphenat}
\usepackage{varioref}
\usepackage{setspace}
\usepackage{pinlabel}
\usepackage{chngpage}
\usepackage{calc}
\usepackage{listings}
\usepackage{float}    
\usepackage{comment}
\usepackage{pgf}
\usepackage{microtype}
\usepackage{tikz-cd}
\usepackage{mathtools}
\usetikzlibrary{shapes.geometric}
\usepackage{graphicx}
\usepackage{subcaption}
\usepackage{multicol}

\usepackage{relsize}
\usepackage{lipsum}
\usepackage{faktor}
\usepackage{enumitem}
\usepackage{hyperref}
\usepackage{stmaryrd}
\newtheorem{prop}{\textsc{Proposition}}[section]
\newtheorem{cor}[prop]{\textsc{Corollary}}
\newtheorem{thm}[prop]{\textsc{Theorem}}
\newtheorem{conj}[prop]{\textsc{Conjecture}}
\newtheorem{quest}[prop]{\textsc{Question}}

\newtheorem{lemma}[prop]{\textsc{Lemma}}
\theoremstyle{definition}
\newtheorem{defn}[prop]{\textsc{Definition}}
\newtheorem{remark}[prop]{\textsc{Remark}}

\newcommand{\F}{\mathbb{F}}
\newcommand{\G}{\mathbb{G}}

\newcommand{\Z}{\mathbb{Z}}
\newcommand{\Q}{\mathbb{Q}}
\newcommand{\R}{\mathbb{R}}

\newcommand{\si}{strongly invertible }
\newcommand{\sil}{strongly invertible Legendrian }

\newcommand{\X}{\mathbb{X}}
\newcommand{\Oo}{\mathbb{O}}
\newcommand{\xx}{\mathbf{x}}
\newcommand{\yy}{\mathbf{y}}
\newcommand{\zz}{\mathbf{z}}
\newcommand{\aalpha}{\textbf{\boldmath$\alpha$}}
\newcommand{\bbeta}{\textbf{\boldmath$\beta$}}
\newcommand{\ggamma}{\textbf{\boldmath$\gamma$}}
\newcommand{\hc}{H \mathcal{C}}
\newcommand{\gc}{GC^-(\mathbb{G})}
\newcommand{\gh}{GH^-(\mathbb{G})}
\newcommand{\gcc}{GC^-(\mathbb{G}')}
\newcommand{\ghh}{GH^-(\mathbb{G}')}
\newcommand{\St}{\mathbf{S}(\G)}
\newcommand{\Stt}{\mathbf{S}(\G')}
\newcommand{\cono}{\text{Cone}_\G(Id+\rho)}
\newcommand{\ccono}{\text{Cone}_{\G'}(Id+\rho)}
\newcommand{\hcono}{\widehat{\text{Cone}}_\G(Id+\rho)}
\newcommand{\ii}{\mathfrak{i}}

\let\origlabelitemi\labelitemi
\renewcommand{\labelitemi}{\scalebox{0.8}{\origlabelitemi}}
\setlist[enumerate,1]{label=\roman*.}
\setlist[enumerate,2]{label=\alph*)}
\setlist[enumerate,3]{label=\arabic*.}
\address{University of Pisa, Department of Mathematics, Largo Bruno Pontecorvo 5, 56127, Pisa, Italy}
\email{\href{mailto:giovanni.framba@phd.unipi.it}{giovanni.framba@phd.unipi.it}}
\title[Equivariant Grid Homology]{Equivariant Grid Homology for Strongly Invertible Knots}
\author[Giovanni Framba]{Giovanni Framba}
\begin{document}
	

\begin{abstract}
We study an equivariant version of grid homology for strongly invertible knots. We consider symmetric grid diagrams encoding the strong inversion, prove that every strongly invertible knot admits such a representation, and establish the corresponding analogue of Cromwell's theorem. A symmetric grid induces an involution on the associated grid complex. We study the mapping cone of the sum of the identity and this involution, proving that its homology is an invariant of strongly invertible knots. Finally, we define equivariant analogues of the tau invariant and of the Legendrian grid invariants, and show that the former provide lower bounds for equivariant unknotting numbers and for the genus of simple equivariant cobordisms.
\end{abstract}
\maketitle
\tableofcontents

\setlength{\parindent}{0pt}
\setlength{\parskip}{5pt}

\section{Introduction and First Definitions}
Symmetry has long been a central theme in knot theory, dating back to the earliest knot tables compiled by Tait \cite{tait}. Over time, the study of symmetries has evolved into a rich and specialized area, with particular attention devoted to knots admitting nontrivial involutions. Among these, \emph{strongly invertible knots} play a prominent role. 

A systematic study of such knots was initiated by Sakuma \cite{sakuma}, who introduced fundamental notions such as the equivariant connected sum and the equivariant concordance group. Since then, strongly invertible knots have attracted renewed attention, with significant developments appearing in recent years \cite{boyle2024equivariant,sano2024involutive,collarilisca,hirasawa2022invariant,boyle2023classification,dai_mallick_stoffregen,hirasawa2022invariant,lipshitz2022khovanov,lamm2022symmetric,lobb2021refinement,snape}, also in the direction of equivariant concordance \cite{alfieri2021strongly,boyle2021equivariant,dai_mallick_stoffregen,dipriframba2,dipriframba1,di2023equivariant,miller2023strongly}.
The present work is situated within this context and aims to develop new tools for the study of \si knots.

Here, we develop an equivariant version of grid homology for strongly invertible knots. The construction is based on a set of grid diagrams, called \emph{symmetric}, encoding the strong inversion, together with an involution on the associated grid complex. The resulting mapping cone complex gives rise to new invariants of strongly invertible knots.
In recent years, homological knot invariants have undergone substantial refinement in the presence of symmetries. 
Equivariant reformulations of Khovanov homology have in fact carved out a place in the literature \cite{sano2024involutive,lipshitz2022khovanov,lobb2021refinement,khovanov2025symmetries}.
The investigation of equivariant knots through knot Floer homology was initiated in \cite{dai_mallick_stoffregen} by Dai, Mallick, and Stoffregen, who introduced new equivariant concordance invariants and lower bounds for the equivariant slice genus, and has more recently been pursued in \cite{xiao2026real,parikh2024localization}. More generally, the interaction between strong inversions and knot invariants has been explored extensively in the recent literature \cite{hendricks2025note,Mallick2022,miller2023strongly,boyle2021equivariant,
 	hirasawa2022invariant,lamm2022symmetric,snape}.

First, we note that \si knots can be represented by grids that reflect their symmetry properties, we call such grids \emph{symmetric grids}. 
Theorem~\ref{thm:cromwellSI} provides a version of Cromwell's theorem for \si knots. More precisely, it provides a list of moves on grids, called \emph{symmetric grid moves}, under which symmetric grids represent equivariantly isotopic \si knots.
We induce an involution $\rho$ on the grid complex associated to a symmetric grid. Theorem~\ref{prop:key} proves that $\rho$ is natural with respect to the symmetric grid moves.

\vspace{3.5mm}
\noindent{\bfseries Theorem \ref{prop:key}}
\textit{Let $\G$ and $\G'$ be two symmetric grids together with the two involutions $\rho:\gc \rightarrow \gc$ and $\rho': GC^-(\G') \rightarrow GC^-(\G')$. Suppose that $\G$ and $\G'$ are connected by a \si grid move $F$ and let $F:\gc \rightarrow \gcc$ be the morphism induced between the complexes.
Then the compositions $F\circ \rho$ and $\rho' \circ F$ are chain-homotopic.}
\vspace{3.5mm}

Motivated by recent equivariant constructions (e.g. \cite{lobb2021refinement,sano2024involutive,dai_mallick_stoffregen}), in Section~\ref{sec:cono} we study the mapping cone $\cono$.  
Theorem~\ref{thm:invariance} is the main result of the section.

\vspace{3.5mm}
\noindent{\bfseries Theorem \ref{thm:invariance}}
\textit{Let $K$ be a \si knot and $\G$ be a grid diagram such that $L(\G) = K$. The homology of $\text{Cone}_{\G}(Id+\rho)$ only depends on the \si type of $K$.}
\vspace{3.5mm}

It is hence possible to set $\hc(K) := H\text{Cone}_{\G}(Id+\rho)$.
We then derive simpler knot invariants from $\hc(K)$, as the equivariant analogous to the Legendrian grid invariants ($d_e^\pm(K)$) and to the Ozsv\'ath-Szab\'o $\tau$ invariant ($\tau_i(K)$, $i=0,1$). These equivariant versions recover specific properties. For example Proposition~\ref{prop:d_U_molt} proves that $d_e^+$ and $d_e^-$ are sensible to equivariant stabilization of \si Legendrian knots, while Theorem~\ref{rmk:unknot_bound} shows that the equivariant Ozsv\'ath-Szab\'o $\tau$ invariant provide lower bounds for the equivariant unknotting numbers defined in \cite{boyle2024equivariant}.

\vspace{3.5mm}
\noindent{\bfseries Proposition \ref{prop:d_U_molt}}
\textit{
	Let $\G$, $\G^+$ and $\G^-$ be grid diagrams whose associated \si Legendrian knots are $K$ and its equivariant stabilization $K^+$ and $K^-$. Assume that both stabilizations are NE- or SW-type along the axis. Then:
	\[ d_e^+(\G^-)=d_e^+(\G) \quad\quad d_e^-(\G^-)=d_e^-(\G)+1, \]
	and
	\[ d_e^+(\G^+)=d_e^+(\G)+1 \quad\quad d_e^-(\G^+)=d_e^-(\G). \]}
\vspace{0mm}

\noindent{\bfseries Theorem \ref{rmk:unknot_bound}} \textit{Let $K$ be a \si knot.
	Assume to unknot $K$ performing only \emph{Type A} crossing changes, then $|\tau_i(K)| \leq 2\widetilde{u}_A(K)$, $i=0,1$. Performing only \emph{Type B} crossing changes, one gets $|\tau_i(K)| \leq \widetilde{u}_B(K)$, $i=0,1$. 
	In general: $|\tau_i(K)| \leq 2\widetilde{u}(K)$, $i=0,1$.}
\vspace{3.5mm}
 
The explicit computation of these invariants is an interesting direction for future investigation. Motivated by this, we propose a few open questions.
 \begin{quest}
	Since $\tau_0(K)\leq\tau(K)\leq\tau_1(K)$ (see Remark~\ref{rmk:tauineq}), find examples of \si knots such that the inequalities are strict.
\end{quest}
\begin{quest}
	Find examples of different \si structure, on the same topological knot, distinguished by $\tau_0$ or $\tau_1$.
\end{quest}
\begin{quest}
	Could $\tau_0$ and $\tau_1$ be used to answer \cite[Problem E]{dioguz} question on the existence of equivariant $\Q$-concordance invariants using Floer homology?
\end{quest}
 \begin{quest}
	Find a symmetric grid $\G$ such that:
	\begin{itemize}
		\item $L(\G)$ as a Legendrian knot is unequivariantly positively (resp. negatively) stabilized.
		\item $d_e^+(\G)$ (resp. $d_e^-(\G)$) is zero.
	\end{itemize}
\end{quest}
 
Finally, we study equivariant cobordisms of \si knots. We establish an equivariant normal form for simple equivariant cobordisms. Using this, Theorem~\ref{thm:genus_bound} shows that the equivariant version of the $\tau$-invariant provides lower bounds for the slice genus of a simple equivariant cobordism.

\vspace{3.5mm}
\noindent{\bfseries Theorem \ref{thm:genus_bound}} \textit{Let $K$ and $K'$ be two \si knots connected by a genus $g$ simple equivariant cobordism $S\subseteq [0,1]\times S^3$. Then the equivariant tau invariants $\tau_0$ and $\tau_1$ both provide a lower bound to the genus of the cobordism:
	\[ \max \left\{|\tau_0(K)-\tau_0(K')|\,,\, |\tau_1(K)-\tau_1(K')| \right\} \leq g. \]}
\vspace{0.5mm}

We conclude by proposing the following conjecture.

\vspace{3.5mm}
\noindent{\bfseries Conjecture \ref{congettura}} \textit{Let $K_1$ and $K_2$ be two \si knots connected by a genus $g$ simple equivariant cobordism.
	Then there exists an $\F[U]$-linear $(-2g,-g)$-homogeneous homomorphism:
	\[ \phi: \hc(K_1) \rightarrow \hc(K_2), \]
	Moreover, $\phi$ is induced by a chain map and its image is not contained in the torsion submodule.}
\vspace{3.5mm}

Our approach to the conjecture consists in first proving the statement for elementary equivariant cobordisms. Proofs for some cases are presented, and the section concludes with a brief discussion of the remaining cases.

\subsection{Strongly invertible knots}
This work focuses on a particular family of knots distinguished by a specific symmetry property.
\begin{defn}
	A \emph{strongly invertible knot} is a pair $(K,\rho)$, where $K:S^1 \hookrightarrow S^3$ is a knot and $\rho:S^3 \rightarrow S^3$ is an orientation-preserving smooth involution such that $\rho(K)=K$ and $\rho$ reverses the orientation on $K$. We will call $\rho$ the \emph{strong involution}.
\end{defn}
Note that, thanks to the resolution of the Smith conjecture \cite{smith}, we can define a strongly invertible knot as a pair $(K,\rho)$ where $K$ is a knot and $\rho:S^3\rightarrow S^3$ is an involution such that $Fix(\rho)$ is an unknotted circle meeting $K$ in exactly two points (in the case of a link, this is valid for each component). Therefore, a strongly invertible knot always admits a \emph{transvergent} diagram, that is, a diagram symmetric via the $\pi$-rotation along an axis lying in the same plane as the diagram. See Figure~\ref{img:Es_SI}.
\begin{figure}[h]
	\centering
	\includegraphics[width=0.4\linewidth]{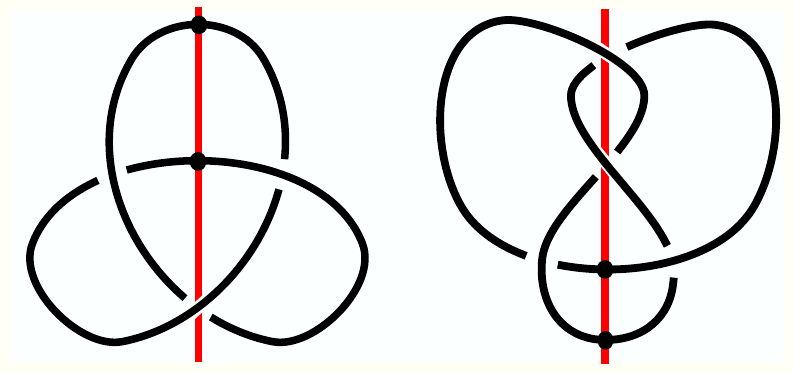}
	\caption{The strongly invertible knots $(3_1,\rho)$ and $(4_1,\rho)$. In both cases, the strong inversion is the $\pi$-rotation along the vertical red axis.}
	\label{img:Es_SI}
\end{figure}

We say that two strongly invertible knots are \emph{equivariantly isotopic}, or equivalently, if and only if they are isotopic via a family of strongly invertible knots. In other words, two strongly invertible knots $(K_0,\rho_0),(K_1,\rho_1) \subset S^3$ are equal if and only if there exists a continuous family $(K_t,\rho_t)$, $t\in[0,1]$ of strongly invertible knots interpolates between $K_0$ and $K_1$.

\section*{Structure}
The paper is organized as follows. Section~\ref{prelim} introduces the basic definitions of \si Legendrian knots and grid homology. This section may be skipped by readers already familiar with the topics. Section~\ref{chp:eq_grid} introduces symmetric grid diagrams for strongly invertible knots, proves that every strongly invertible knot admits such a representation, and establishes the corresponding analogue of Cromwell's theorem. It then constructs the involution on the grid complex and studies its naturality with respect to symmetric grid moves. Section~\ref{sec:cono} introduces the equivariant version of grid homology for strongly invertible knots through the mapping cone of the involution, proves that its homology is an invariant of strongly invertible knots, and derives invariants, including equivariant analogues of the Ozsváth--Szabó $\tau$ invariant and the Legendrian grid invariants. Section~\ref{sec:cobbound} investigates cobordisms of \si knots, establishes an equivariant normal form for simple equivariant cobordisms, derives simple genus bounds from the equivariant $\tau$ invariant, and concludes with conjectural functorial properties of the theory.

\section*{Acknowledgments}
I thank my advisor, Paolo Lisca, for his kindness and generosity with his time. I would like to thank Marco Marengon for his attentive listening.
I am indebted to Kristen Hendricks for her interest in and patience with my work.
I am grateful to Filippo Bianchi, Alice Merz, and Diego Santoro for sharing the perspective that comes with greater experience in the mathematical journey.
Finally, I would like to thank Alessio Di Prisa for his enthusiasm and for his constant willingness to help.

\section{Preliminaries}\label{prelim}
We present the preliminaries needed throughout the paper. 
Section~\ref{app:leg} defines strongly invertible Legendrian knots, following \cite{collarilisca}. 
Section~\ref{chp:grid} recalls the construction of grid homology. The primary reference is \cite{ozsvath2015grid}. 
Readers already familiar with the subjects may skip to Section~\ref{chp:eq_grid}.

\subsection{Strongly Invertible Legendrian Knots}\label{app:leg}
Let $\xi_{st} \subset T\R^3$ be the \emph{standard contact structure} on $\mathbb{R}^3$.
\begin{defn}[\cite{collarilisca}]
	Let $\rho:\R^3\rightarrow \R^3$ be the involution: $\rho(x,y,z) = (x,-y,-z)$. 
	A \emph{strongly invertible Legendrian link} is a link that is simultaneously strongly invertible with respect to $\rho$ and Legendrian with respect to $\xi_{st}$. Two strongly invertible Legendrian links are equivalent if they are connected by a family of strongly invertible Legendrian links.
\end{defn}
\begin{defn}[\cite{collarilisca}]
	A front $\mathcal{F}$ of a \sil link is a \emph{transvergent front} if the reflection $(x,z)\rightarrow (x,-z)$ fixes $\mathcal{F}$ setwise.
\end{defn}
See Figure~\ref{img:sifront} for easy examples of transvergent fronts. In \cite[Theorem 1.3]{collarilisca}, the authors provide the analogue of Reidemeister's theorem in the \sil setting. 

\begin{figure}
	\centering
	\includegraphics[width=0.6\linewidth]{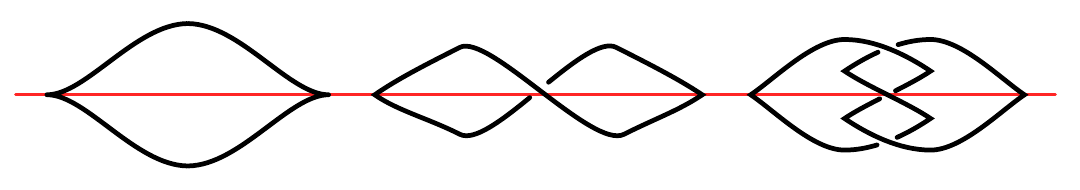}
	\caption{Examples of transvergent fronts. The red axis is the $x$-axis.}
	\label{img:sifront}
\end{figure}

\begin{remark}\label{rem:genericità}
	By the $CX$ and $CR$ moves in \cite[Theorem~1.3]{collarilisca}, we can always assume that the leftmost (resp. rightmost) point of the transvergent front is a left (resp. right) cusp, as in the examples of Figure~\ref{img:sifront}. 
\end{remark}

\begin{figure}[t]
	\centering
	\begin{subfigure}[c]{0.98\textwidth}
		\centering
		\includegraphics[width=0.6\linewidth]{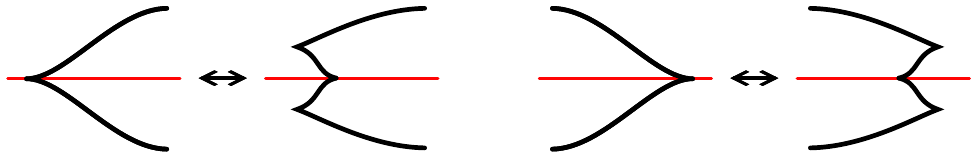}
		\caption{$T$-stabilization on a transvergent front.}
	\end{subfigure}
	\centering
	\begin{subfigure}[c]{0.98\textwidth}
		\centering
		\includegraphics[width=0.6\linewidth]{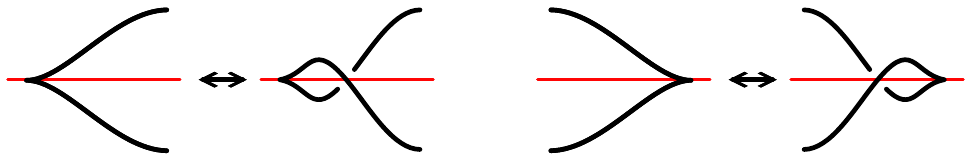}
		\caption{$S$-stabilization on a transvergent front.}
	\end{subfigure}
	\caption{}
	\label{img:TSstab}
\end{figure}
\begin{remark}
	An oriented strongly invertible knot is, in particular, equivalent to the knot with the opposite orientation. For strongly invertible Legendrian knots, this equivalence holds only when the rotation is zero. Inverting the orientation changes the sign of the rotation, thereby altering the Legendrian type. Consequently, the analysis will focus on the unoriented case, where the classical invariants are the Thurston-Bennequin invariant and the absolute value of the rotation.
\end{remark}
In \cite[Definition 4.1]{collarilisca}, the authors define two stabilizations on a \sil knot, namely the \emph{T-stabilization} and the \emph{S-stabilization} (see Figure~\ref{img:TSstab}). The latter is a fundamental notion in establishing the following theorem, which is the \sil analogue of Fuchs and Tabachnikov's result.
\begin{thm}\cite[Theorem 1.4]{collarilisca}\label{thm:SIL_FuchsTabachnikov}
	Let $\mathcal{L}$ and $\mathcal{L}'$ be \sil links which are equivalent as \si links. Then, after sufficiently many $S$-stabilizations, $\mathcal{L}$ and $\mathcal{L}'$ become equivalent \sil links.
\end{thm}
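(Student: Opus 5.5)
This is a cited result (Collari–Lisca, Theorem 1.4), the strongly invertible Legendrian analogue of the Fuchs–Tabachnikov theorem. I would prove it by following the classical Fuchs–Tabachnikov strategy in its equivariant form.

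\textbf{Step 1: fronts and moves.} Represent $\mathcal{L}$ and $\mathcal{L}'$ by transvergent fronts; by Remark~\ref{rem:genericità} we may assume the extreme points are cusps. Take transvergent diagrams of the underlying strongly invertible links. By the equivariant Reidemeister theorem for transvergent diagrams (Sakuma, and the symmetric-diagram move list), the two diagrams are connected by a finite sequence of moves of two kinds. Off the axis, the moves are ordinary Reidemeister moves performed simultaneously in symmetric pairs. On the axis, there is a finite list of special moves.

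\textbf{Step 2: realize each move by Legendrian moves.} For each off-axis pair of moves, I use the standard Fuchs–Tabachnikov argument.
\begin{itemize}
\item A topological Reidemeister move can be realized by Legendrian isotopy after adding zig-zags.
\item Zig-zags can be moved freely along a component, up to Legendrian isotopy.
\end{itemize}
Because of this, all needed zig-zags can be pushed to a chosen place. Doing this symmetrically, off-axis stabilizations come in symmetric pairs. I would check that a symmetric pair of zig-zags can be traded for $S$-stabilizations, possibly combined with $T$-stabilizations, using the moves of \cite[Theorem~1.3]{collarilisca}.

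For moves on the axis, I would check each special move on the list individually. The goal is to show that, after enough $S$-stabilizations near the axis, each one is realized by transvergent Legendrian isotopy (the moves of \cite[Theorem 1.3]{collarilisca}).

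\textbf{Step 3: control the classical invariants.} Stabilizations add rotation in two ways. I would use that a $T$-stabilization can be expressed through $S$-stabilizations up to equivariant Legendrian isotopy, or else that rotation is irrelevant in the unoriented setting. Then I balance the numbers of positive and negative stabilizations on the two sides so that both sequences end at the same transvergent front up to equivariant Legendrian isotopy. The final identification uses the classical fact that zig-zags commute past every piece of the front. Here we need its equivariant version: a symmetric pair of zig-zags, or an $S$-stabilization at the axis, can be slid through the whole front.

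\textbf{Main obstacle.} I expect the hardest step to be the axis moves together with the sliding of stabilizations across the axis fixed points. Here it is not clear that a zig-zag can pass through a fixed point of $\rho$. This is exactly where the $S$-stabilization, rather than symmetric pairs of ordinary stabilizations, must be used. I would try first to show by explicit front pictures that an $S$-stabilization at one fixed point can be moved to the other fixed point, and that it absorbs a symmetric pair of off-axis zig-zags.
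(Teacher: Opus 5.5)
The paper gives no proof of this statement; it is quoted from \cite{collarilisca}, so your outline has to stand on its own. The architecture is the natural one: an equivariant Reidemeister theorem for transvergent diagrams, a Legendrian realization of each move after stabilizing, then composition of the local relations. However, the step that closes your argument relies on a mechanism that does not exist.

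An $S$-stabilization is a symmetric local modification at a fixed point $p$ of $\rho$, i.e.\ at a cusp of the front lying on the axis. It flips the direction $\pm\partial_y$ in which the oriented knot crosses the axis at $p$. Equivalently, it changes the parity of $tb$ and $r$, which no symmetric pair of zig-zags can do. It is therefore pinned to $p$: it cannot be ``slid through the whole front''. Your substitute, moving it from one fixed point to the other, is neither established nor needed.

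What the composition argument actually requires is two facts:
\begin{enumerate}
\item $S$-stabilization at a labelled fixed point, with a given sign, is well defined on equivariant Legendrian isotopy classes. An equivariant isotopy carries the fixed points along, so this needs equivariant isotopy extension plus uniqueness of the local model at a fixed point.
\item These operations commute. This is automatic at different fixed points. At the same fixed point it is a genuine check for the two signs: the crossingless version, which turns a right axis cusp into a left one, and the version that adds a crossing on the axis. Precisely because they cannot be separated by sliding, their commutation must be verified directly.
\end{enumerate}
Once these are in place, the local relations $w_i(\mathcal{L}_{i-1})\simeq w_i'(\mathcal{L}_i)$ compose directly to $w(\mathcal{L})\simeq w'(\mathcal{L}')$. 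No balancing of positive and negative stabilizations is involved, since the statement allows different stabilization words on the two sides.

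Your fallback that ``rotation is irrelevant in the unoriented setting'' is false. $|r|$ remains an invariant of unoriented strongly invertible Legendrian links, and the two signs of $S$-stabilization are different operations, so drop that alternative.

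The trade of symmetric pairs of zig-zags for $S$-stabilizations, by contrast, is an explicit front computation you should simply carry out. Slide the pair along the two half-arcs until it sits next to a fixed point $p$. Two successive crossingless $S$-stabilizations at $p$ (right axis cusp to left, and back) produce exactly one zig-zag on each half-arc adjacent to the axis cusp. The sign is dictated by how the front passes through $p$, so the opposite sign requires the other type. No $T$-stabilizations are needed.

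Finally, the only genuinely equivariant content of the theorem is deferred to ``I would check each special move''. That content is realizing each on-axis move by transvergent Legendrian moves after $S$-stabilizations, namely:
\begin{itemize}
\item a strand passing through the axis;
\item the moves that become $CX$, $CR$, $XX$, $CC$;
\item the choice of a left or right axis cusp when Legendrian-realizing a point where the diagram meets the axis perpendicularly.
\end{itemize}
As written, the proposal therefore covers only the classical, off-axis part of the argument.
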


\subsection{Introduction to Grid Homology}\label{chp:grid}
We briefly review the construction of grid homology, following the exposition in \cite{ozsvath2015grid}.

\begin{defn}
	A \emph{planar grid diagram} $\G$ is an $n\times n$ grid on the plane; that is, a square made of $n$ rows and $n$ columns of small squares. Furthermore, $n$ of these small squares are marked with an $X$, and $n$ of them are marked with an $O$. These markings are distributed in such a way that:
	\begin{itemize}
		\item Each row has a single square marked with an $X$ and a single square marked with an $O$. 
		\item Each column has a single square marked with an $X$ and a single square marked with an $O$. 
		\item No square is marked both with an $X$ and with an $O$.
	\end{itemize}
	The number $n$ is called the \emph{grid number} of $\G$, or \emph{size} of $\G$.
\end{defn}
\begin{remark}	\label{rmk:extended}
	Note that the forthcoming construction of the grid homology does not require the third condition (see \cite[Section 8.4]{ozsvath2015grid}). A square marked with both an $O$-marking and an $X$-marking represents an unknotted component, and a grid admitting such squares is called \emph{extended grid}. 
\end{remark}
By convention, rows are enumerated from bottom to top and columns from left to right.
The set of squares marked with an $X$ is denoted by $\X$. Analogously, $\Oo$ is the set of squares marked with an $O$. 
\begin{figure}
	\centering
	\includegraphics[width=0.95\linewidth]{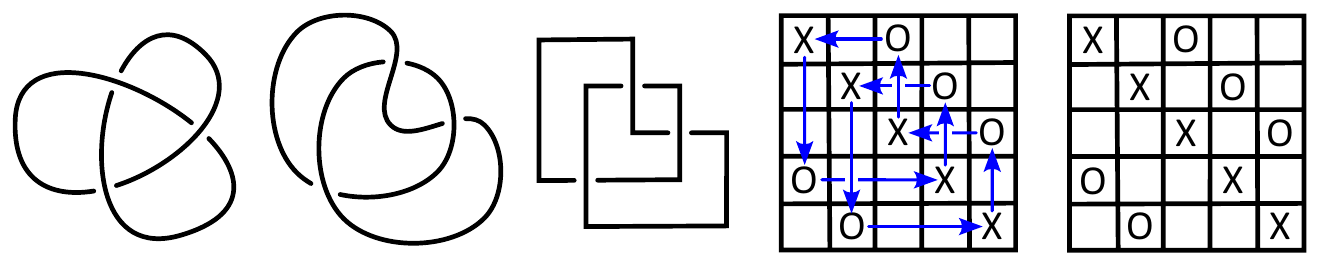}
	\caption{Diagram to grid and vice versa.}
	\label{img:gridtodiag}
\end{figure}
\begin{remark}
	Observe that it is possible to recover a knot (or link) $L(\G)$ from any planar grid diagram $\G$ by drawing oriented segments connecting the $O$ to the $X$ for each row and connecting the $X$ to the $O$ in each column. The convention for the crossings is that the vertical strand is always overcrossing. We say that $L(\G)$ is the \emph{underlying link} of $\G$. See Figure~\ref{img:gridtodiag}.
	Similarly, every link is represented by a planar grid diagram, therefore for every link $L$ there exists a planar grid diagram $\G$ such that $L(\G)$ is equivalent to $L$. In fact, it is sufficient to consider a PL approximation of $L$ such that the projection only admits horizontal and vertical segments and to fix the horizontal over-crossings by planar isotopies. When necessary, place horizontal and vertical segments in general position, namely, so that no collinear segments occur. Then $\G$ is found by marking the turns suitably with $X$'s and $O$'s, as in Figure~\ref{img:gridtodiag}.
\end{remark}

The question arises of when two planar grid diagrams represent the same link. 
To address this question,it is necessary to define two sets of moves on planar grid diagrams.
\begin{defn}
	For each column of a grid diagram, the heights of the $X$ and of the $O$ markings determine a closed interval on $\mathbb{R}$. 
	Fix a planar grid diagram $\G$ and suppose that the intervals associated with two consecutive columns are either disjoint or one is contained in the interior of the other. Then, by switching these two columns, we obtain a new planar grid $\G'$ (see Figure~\ref{img:commut}) and we say that $\G$ and $\G'$ are connected by a column commutation. By the same procedure as rows, it is possible to define row commutations. A column or row commutation is called a \emph{commutation}. 
\end{defn}

\begin{defn}
	Let $\G$ be an $n\times n$ planar grid diagram. A \emph{stabilization} of $\G$ is an $(n+1)\times(n+1)$ planar grid diagram $\G'$ obtained by replacing a row and a column of $\G$ with two adjacent rows and two adjacent columns marked in any of the ways depicted in Figure~\ref{img:stab}. Vice versa, we say that $\G$ is a \emph{destabilization} of $\G'$. 
\end{defn}
\begin{figure}
	\centering
	\includegraphics[width=0.3\linewidth]{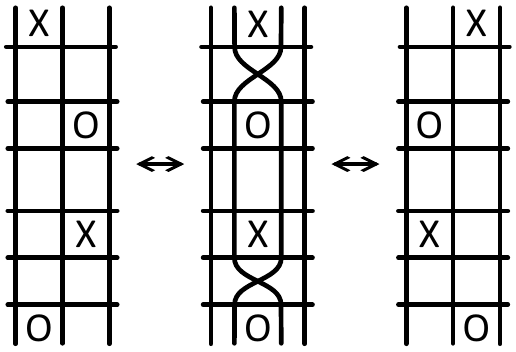}
	\caption{Example of a column commutation, with the intermediate step of a grid containing the vertical lines of both $\G$ and $\G'$.}
	\label{img:commut}
\end{figure}
\begin{figure}
	\centering
	\includegraphics[width=0.9\linewidth]{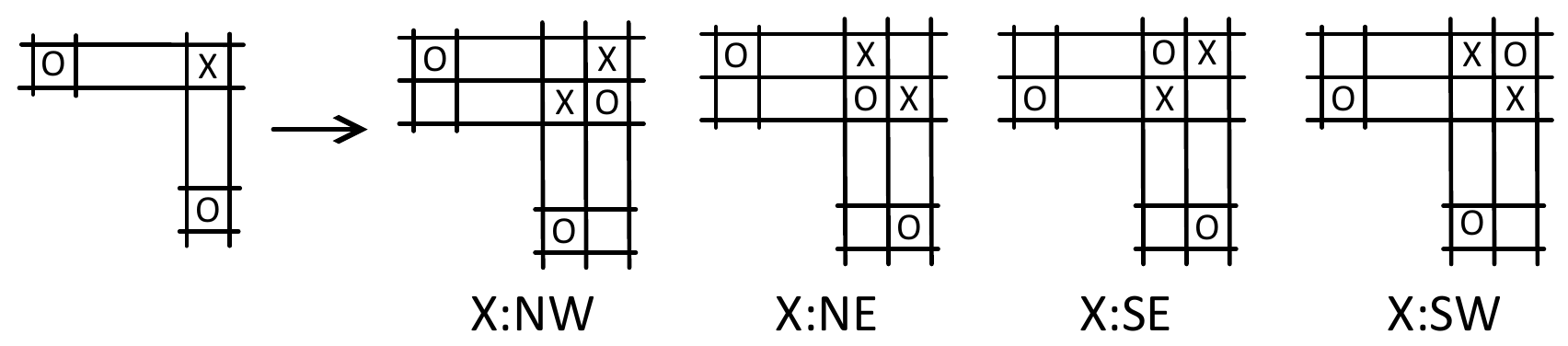}
	\caption{Stabilizations at an $X$-marking. For the stabilizations at an $O$-marking, exchange all the $X$'s and $O$'s markings in the pictures.}
	\label{img:stab}
\end{figure}
We will collectively refer to commutations and (de)stabilizations as \emph{grid moves}.
It is possible to define knot invariants using grid diagrams in light of the following result.
\begin{thm}[Cromwell,\cite{CROMWELL199537}]\label{thm:cromwell}
	Two planar grid diagrams represent equivalent links if and only if there is a finite sequence of grid moves that transforms one into the other.
\end{thm}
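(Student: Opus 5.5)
We prove Cromwell's theorem (Theorem~\ref{thm:cromwell}) by passing through Reidemeister's theorem, so there are two directions.

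\textbf{Easy direction.} We check that each grid move preserves the link type of $L(\G)$.

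\emph{Commutations.} Suppose the intervals of two consecutive columns are disjoint or nested. Then sliding one vertical segment horizontally past the other crosses no horizontal segment that would obstruct the slide, because the vertical strands are always over. This is therefore a planar isotopy combined with Reidemeister moves of type II and III. The case of row commutations is symmetric.

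\emph{Stabilizations.} A stabilization replaces a corner of the diagram by a small kink or a small square detour. This is a Reidemeister I move, or a planar isotopy.

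\textbf{Hard direction.} Given two grids $\G,\G'$ with $L(\G)\simeq L(\G')$, we pass to ``rectilinear'' diagrams: diagrams made of horizontal and vertical segments in general position, with vertical strands always over. By the remark preceding the definition of commutations, every link diagram can be made rectilinear, and rectilinear diagrams correspond to grids up to commutations.

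\begin{enumerate}
\item \emph{Planar isotopies.} Rectilinear diagrams related by a planar isotopy preserving rectilinearity differ only by changes in the relative order of parallel segments that do not interact, namely commutations. Straightening or adding corners corresponds to stabilizations of types that are planar, such as an $O$ moved past its adjacent $X$ by a small square detour.
\item \emph{Cyclic permutations.} The grid lives on a torus, so cyclic permutation of rows or columns must also be realized. We realize it by a sequence of stabilizations and commutations: stabilize near the boundary, commute the long strand across the grid, then destabilize. This is the standard lemma in \cite{ozsvath2015grid}.
\item \emph{Reidemeister moves.} Each Reidemeister move is realized rectilinearly.
\begin{itemize}
\item R-I is a stabilization.
\item R-II and R-III are sequences of commutations, after using stabilizations to make segments short and positioned correctly.
\item A crossing in which the horizontal strand is over must never appear. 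Whenever such a crossing would arise, we replace it by a detour that turns the horizontal strand into a vertical one via two corners. These are again stabilizations.
\end{itemize}
\end{enumerate}

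\textbf{Main obstacle.} The main obstacle is the bookkeeping that every elementary change of a rectilinear diagram is a composition of commutations and (de)stabilizations. This applies especially to the ``switch'' move needed when collinear segments pass each other, and to realizing R-III when the middle strand is horizontal.

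To handle this, we would first prove that commutations where the intervals interleave (a ``generalized commutation'') follow from stabilizations and ordinary commutations. We would then reduce every Reidemeister configuration to one in which all moves are such generalized commutations.
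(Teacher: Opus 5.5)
The paper does not prove this statement; it quotes it from Cromwell. A proof along the lines you intend is in \cite[Appendix B]{ozsvath2015grid}, and the paper transcribes its structure equivariantly in the proof of Theorem~\ref{thm:cromwellmio}. Your architecture (rectilinear diagrams plus Reidemeister moves) is the standard one, and the easy direction is fine.

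The lemma you propose for the main obstacle is false. Swapping two consecutive columns whose intervals interleave does not preserve the link type, so it cannot be a composite of commutations and (de)stabilizations. Take the $4\times 4$ grid with $X$-markings at $(1,1),(2,2),(3,3),(4,4)$ and $O$-markings at $(3,1),(4,2),(1,3),(2,4)$ (column, row). It consists of the rectangles $[1,3]\times[1,3]$ and $[2,4]\times[2,4]$, which cross alternately at $(2,3)$ and $(3,2)$, so it is a Hopf link. Columns $2$ and $3$ carry the interleaved intervals $[2,4]$ and $[1,3]$. Swapping them gives the disjoint rectangles $[1,2]\times[1,3]$ and $[3,4]\times[2,4]$, a split unlink.

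The swap that genuinely arises when collinear segments pass each other in a rectilinear isotopy is the one where the two intervals share an endpoint. That swap does preserve the link type (it is a planar isotopy or an R-I move), and reducing it to grid moves is a legitimate lemma. It is not the statement you wrote.

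Beyond this, the passage from arbitrary diagrams back to grids is missing. You only treat planar isotopies that keep the diagram rectilinear. However, the intermediate diagrams of a Reidemeister sequence from $L(\G)$ to $L(\G')$ are arbitrary. Even an isotopy between two rectilinear diagrams need not stay rectilinear, since rotating a crossing by $\pi/2$ violates the vertical-over convention. Two statements are needed:
\begin{enumerate}
\item The grid obtained from a diagram by rectilinearization is well defined up to grid moves. That is, it is independent of the choices, invariant under arbitrary planar isotopy, and changed only by grid moves under each Reidemeister move. Your Reidemeister step asserts this last point rather than proving it.
\item Every grid is connected by grid moves to the grid obtained by rectilinearizing its own diagram. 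This is proved by first moving the grid into a special form for which that reconstruction visibly returns the same grid.
\end{enumerate}
These are exactly the well-definedness of the map $f$ and the normal-form argument for $f(g(\G))\sim\G$ that the paper reproduces in the proof of Theorem~\ref{thm:cromwellmio}.

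Your cyclic-permutation step is not needed for planar grids with the moves defined here. Also, no three straight segments of a rectilinear diagram pairwise cross, so an R-III configuration always involves bent strands. That is where the real bookkeeping lies, not in interleaved swaps.
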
 

Transfer planar grid diagrams to the torus, by identifying the top boundary segment with the bottom one and the left boundary segment with the right one.
The quotient inherits an orientation from the plane. Furthermore, vertical and horizontal lines become circles. Call $\aalpha=\{\alpha_i\}_{i=1}^n$ the horizontal circles and $\bbeta=\{\beta_i\}_{i=1}^n$ the vertical circles.
The resulting diagram is called a \emph{toroidal grid diagram}.
A size $n$ toroidal grid diagram admits $n^2$ realizations as a planar grid diagram, depending on which $\alpha$ and $\beta$ circles we cut along to obtain a square. The links represented by such realizations are all equivalent (\cite[Section 3.2]{ozsvath2015grid}).

\subsubsection{Grid homology}
The discussion specializes to the case of knots.  
For most definitions and applications, the field of two elements $\F= \Z/2\Z$ is used.
The generators of the grid complex are now defined.

\begin{defn}
	A \emph{grid state} for a grid number $n$ grid diagram $\G$ is an $n$-tuple $\xx=\{x_1,...,x_n\}$ of points in the torus, called \emph{intersection points}, such that each horizontal circle and each vertical circle contains exactly one point of $\xx$. The set of all grid states for $\G$ is denoted $\St$.
\end{defn}

\begin{figure}
	\centering
	\includegraphics[width=0.25\linewidth]{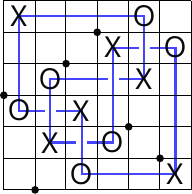}
	\caption{A grid state on a grid representing the figure eight knot.}
	\label{fig:fig8}
\end{figure}

States on a grid diagram are represented using a planar realization of the grid (see Figure~\ref{fig:fig8}).
Due to the identifications of sides, no intersection points are indicated on the rightmost and top edges.

Observe that the $\aalpha$ and $\bbeta$ circles divide the torus into $n^2$ oriented squares $S_1,\dots,S_{n^2}$. A formal linear combination of the closures of these squares, $\psi= \sum a_i S_i$ with integer $a_i$'s, has a boundary $\partial \psi$ that is a formal linear combination of segments in $\aalpha \cup \bbeta$. We call $\partial_\alpha \psi = \partial \psi \cap \aalpha$ and $\partial_\bbeta \psi = \partial \psi \cap \bbeta$. 
\begin{defn}\cite[Definition 4.6.4]{ozsvath2015grid}
	Fix $\xx,\yy \in \St$. A \emph{domain} $\psi$ from $\xx$ to $\yy$ is a formal linear combination of the closure of the squares in $\G\setminus (\aalpha \cup \bbeta)$, with the property that $\partial(\partial_\alpha \psi) = \yy - \xx$ and hence $\partial(\partial_\alpha \psi) = \yy - \xx$. In this equation, the two sides represent a formal linear combination of points. Denote the set of domains from $\xx$ to $\yy$ by $\pi(\xx,\yy)$.
\end{defn}

Fix two states $\xx,\yy \in \St$, and an embedded rectangle $r$ in the torus such that $\partial r \subset \aalpha \cup \bbeta$. Suppose that $\xx$ and $\yy$ intersect in exactly $n-2$ points and that the remaining four points of $\xx\cup\yy$ are the corners of $r$. Call $\partial_\alpha r = \partial r\cap\aalpha$ and similarly for $\partial_\beta r$. Then the rectangle $r$ goes from $\xx$ to $\yy$, hence $r\in \pi(\xx,\yy)$, if:
\[ \partial(\partial_\alpha r) = \yy - \xx \qquad \text{and} \qquad  \partial(\partial_\beta r) = \xx - \yy. \]
\begin{remark}
	Denote the set of rectangles from $\xx$ to $\yy$ by $\text{Rect}_\G(\xx,\yy)$. Note that it is either empty or consists of two distinct elements; in the latter case, the same goes for $\text{Rect}_\G(\yy,\xx)$.
\end{remark}
In particular, we will be interested only in the following rectangles, called \emph{empty rectangles}:
\[ \text{Rect}^\circ_\G(\xx,\yy) = \left\{ r \in \text{Rect}(\xx,\yy) \;|\; \xx\cap \text{Int}(r) = \yy\cap \text{Int}(r) = \emptyset \right\}. \]
In some cases, the reference to the grid will be omitted from the notation.


Given a rectangle $r$, for all $i=1,...,n$ define the \emph{multiplicity} $O_i(r)$ of $r$ at $O_i$, as $1$ or $0$ depending on whether $r$ contains $O_i$ or not. 
Finally, fix formal variables $V_1,...,V_n$ and consider the ring $\mathcal{R}=\F[V_1,...,V_n]$.
\begin{defn}
	The \emph{(unblocked) grid complex} $\gc$ is the free module over $\mathcal{R}$ generated by $\St$, equipped with the $\mathcal{R}$-module endomorphism defined for any $\xx\in \St$ as:
	\[ \partial_\X^-\xx = \sum_{\yy\in\St} 
	\sum_{\left\{r\in \text{Rect}^\circ(\xx,\yy) \;|\; r\cap\X = \emptyset\right\} }
	V_1^{O_1(r)}\cdots V_n^{O_n(r)}\cdot \yy. \]
\end{defn}
There exist two grading functions defined on states (see \cite[Section 4.3]{ozsvath2015grid}), which induce two gradings on the grid complex: the \emph{Maslov grading} and the \emph{Alexander grading}, denoted by $d$ and $s$, respectively. We hence have that $\gc = \bigoplus_{d,s\in\Z}GC_d^-(\G,s)$ is a bigraded $\mathcal{R}$-module and $\partial_\X^-$ is a homogeneous $\mathcal{R}$-module homomorphism of degree $(-1,0)$, i.e. sends $GC_d^-(\G,s)$ to $GC_{d-1}^-(\G,s)$, such that $\partial_\X^-\circ \partial_\X^- = 0$.

The following is a key property.
\begin{lemma}\cite[Lemma 4.6.9]{ozsvath2015grid}\label{eU}
	For any pair of integers $i,j\in\{1,...,n\}$, multiplication by $V_i$ is chain homotopic to multiplication by $V_j$, when thought of as homogeneous maps from $\gc$ to itself of degree $(-2,1)$.
\end{lemma}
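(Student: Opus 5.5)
It suffices to treat the case $j=i+1$ (indices taken cyclically in the toroidal grid); for general $i,j$ we chain these homotopies together. Since we work over $\F=\Z/2\Z$, sums of homotopies are again homotopies.

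\textbf{Construction of the homotopy.} Because $L(\G)$ is a knot, the markings can be ordered along the knot so that $O_i$ and $O_{i+1}$ lie in the same row or column as a common $X$-marking, say $X_i$. Define an $\mathcal{R}$-linear map $\mathcal{H}_{X_i}:\gc\to\gc$ by
\[ \mathcal{H}_{X_i}(\xx)=\sum_{\yy\in\St}\ \sum_{\{r\in \text{Rect}^\circ(\xx,\yy)\,|\, r\cap\X=\{X_i\}\}} V_1^{O_1(r)}\cdots V_n^{O_n(r)}\cdot\yy. \]
This is the same sum as in $\partial^-_\X$, except that we count empty rectangles containing exactly the single $X$-marking $X_i$. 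From the grading formulas for rectangles, each such $r$ changes $(d,s)$ by $(-1,0)$ once the $X_i$ contribution is included. Hence $\mathcal{H}_{X_i}$ has degree $(-1,1)$, which is the right degree for a homotopy between two maps of degree $(-2,1)$.

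\textbf{Verification.} We must show that $\partial^-_\X\mathcal{H}_{X_i}+\mathcal{H}_{X_i}\partial^-_\X=V_i+V_{i+1}$. The left side counts juxtapositions $r_1*r_2$ of two empty rectangles whose union avoids every $X$ except $X_i$, which it contains exactly once. We run the standard decomposition argument used to prove $\partial^-_\X\circ\partial^-_\X=0$, sorting composite domains $\psi\in\pi(\xx,\zz)$ into three cases.
\begin{enumerate}
\item If $\xx\neq\zz$ and the two rectangles share at most one or two corners, then $\psi$ admits exactly two decompositions. The two terms cancel modulo $2$.
\item If $\xx=\zz$, then $\psi$ is a thin annulus: a full row or a full column of width one. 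Such an annulus contains exactly one $X$-marking and one $O$-marking. Only the annuli through $X_i$ contribute, and these are the row and the column of $X_i$. Each admits a unique decomposition into two empty rectangles, and they contribute $V_{i}\xx$ and $V_{i+1}\xx$ respectively, for the two $O$'s adjacent to $X_i$ along the knot.
\end{enumerate}
Summing over $\xx$ gives exactly multiplication by $V_i+V_{i+1}$. Since multiplication by the $V$'s commutes with $\partial^-_\X$, these are indeed chain maps, and the homotopy is $\mathcal{R}$-linear.

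\textbf{Main obstacle.} The delicate step is the thin-annulus case. We must check that each annulus has a unique decomposition, in the sense that its count contributes an odd number (namely one) for each starting state. We must also check that annuli containing other $X$'s are correctly excluded. This requires a careful count of which width-one annuli appear when the support is constrained to contain $X_i$ exactly once. It is also where the knot hypothesis enters: it guarantees that the row and column through $X_i$ carry two distinct $O$-markings that are consecutive along the knot. We would follow the analysis in \cite[Lemma 4.6.9]{ozsvath2015grid}.
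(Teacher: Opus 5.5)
Your proposal is the standard argument and matches what the paper relies on. The paper cites \cite[Lemma 4.6.9]{ozsvath2015grid} and later recalls the same elementary homotopy $h_{ij}$, which counts empty rectangles whose interior meets $\X$ only in $X_i$. For non-consecutive markings it chains these homotopies along a path of consecutive $O$'s, and that is where the knot hypothesis enters. Your domain-decomposition check is the right one: the row and column annuli through $X_i$ contribute $V_i+V_j$.

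One concrete correction concerns the grading. For an empty rectangle $r$ from $\xx$ to $\yy$ one has $M(\xx)-M(\yy)=1-2\#(r\cap\Oo)$ and $A(\xx)-A(\yy)=\#(r\cap\X)-\#(r\cap\Oo)$. So the term $V_1^{O_1(r)}\cdots V_n^{O_n(r)}\yy$ lies in bigrading $(M(\xx)-1,\,A(\xx)-\#(r\cap\X))$.
\begin{itemize}
\item Hence $\mathcal{H}_{X_i}$ has degree $(-1,-1)$, not $(-1,1)$. Your own sentence (each term shifts $(d,s)$ by $(-1,0)$, ``hence $(-1,1)$'') does not follow even internally.
\item Multiplication by $V_i$ has degree $(-2,-1)$. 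The $(-2,1)$ in the displayed statement is a typo: the paper itself later uses $(-2,-1)$ for $U$ and for the $\mathbf{V}_i$.
\item With this fixed, $\partial^-_\X\mathcal{H}_{X_i}+\mathcal{H}_{X_i}\partial^-_\X=V_i+V_j$ is a homogeneous identity of degree $(-2,-1)$, as it should be.
\end{itemize}

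Some smaller points:
\begin{itemize}
\item Your case list announces three cases but lists two. The $\xx\neq\zz$ case should be split into $|\xx\setminus\zz|=4$ and $|\xx\setminus\zz|=3$.
\item In both of those cases you should note that the alternative decomposition is also of the counted type. Multiplicities add, so $X_i$ lies in exactly one of the two pieces and no other $X$ lies in either.
\item The row and column of $X_i$ carry distinct $O$'s in any non-extended grid. The knot hypothesis is needed only to connect arbitrary $O_i,O_j$ by a chain of consecutive pairs, not for this distinctness.
\end{itemize}
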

Note that the proof of Lemma~\ref{eU} uses the fact that we specialized to the knot case. In the general link case, the thesis holds only for multiplications associated with $O$-markings that lie in the same connected component. 

The grid homology module that we introduced is a knot invariant and can be used to extract other invariants, in the following sense.
\begin{thm}\cite[Theorem 4.6.19]{ozsvath2015grid}\label{thm:gh}
	The homology $\gh$ (thought of as a bigraded $\F[U]$-module) depend on the grid $\G$ only through its underlying (unoriented) knot. 
\end{thm}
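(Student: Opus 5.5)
The statement is Theorem~\ref{thm:gh}, the invariance of $\gh$ as a bigraded $\F[U]$-module. By Cromwell's theorem (Theorem~\ref{thm:cromwell}), two grids representing the same knot are connected by a finite sequence of commutations and (de)stabilizations. The plan is therefore to do three things:
\begin{itemize}
\item make sense of the $\F[U]$-module structure;
\item show that each grid move induces a bigraded quasi-isomorphism of $\F[U]$-modules;
\item deal with orientation.
\end{itemize}

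For the module structure, Lemma~\ref{eU} shows that all the $V_i$ act identically on homology. We let $U$ act as any $V_i$, so $\gh$ is a well-defined bigraded $\F[U]$-module.

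For a commutation, I would draw both grids on a common torus with the two vertical curves $\beta$ and $\gamma$, which meet in two points. I would then define the pentagon map $P:\gc\to\gcc$ counting empty pentagons that avoid $\X$, weighted by the $V_i$'s. The reverse map $P'$ is defined the same way, and a homotopy $H$ counts empty hexagons. Checking that $P$ is a chain map, and that $P'\circ P$ is homotopic to the identity via $H$, is done by decomposing juxtapositions of polygons: each composite domain has exactly two decompositions, except for the thin annuli, which contribute the identity term. Grading preservation follows from the grading formulas, using that the intervals in the commutation are nested or disjoint.

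For a stabilization, I would follow the standard route. Let $\G'$ be the stabilized grid of size $n+1$, with new markings $O_1$ and $X_1$ in adjacent squares, and let $c$ be the corner point between them. Split the generators of $GC^-(\G')$ into those containing $c$ (the set $\mathbf I$) and those not containing it (the set $\mathbf N$). This exhibits $GC^-(\G')$ as a mapping cone of a map $\partial^{\mathbf I}_{\mathbf N}$ between complexes built from $\mathbf I$ and $\mathbf N$, with an extra variable $V_1$. Next, identify $\mathbf I$ with $\gc[V_1]$ up to grading shift, and identify $\partial^{\mathbf I}_{\mathbf N}$ up to homotopy with multiplication by $V_1-V_2$. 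This uses domain-counting maps $\mathcal H^{X}$ that count domains with a corner at $c$. Finally, the cone of $V_1-V_2$ on $\gc[V_1]$ is quasi-isomorphic to $\gc$, because $V_1-V_2$ is injective, so its cokernel is $\gc$. The remaining stabilization types reduce to this one by commutations and the symmetry of the torus.

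For orientation, reversing the knot amounts to exchanging the roles of rows and columns, that is, reflecting the grid. This gives an isomorphic complex, so $\gh$ depends only on the unoriented knot.

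The main obstacle I expect is the stabilization step, in particular constructing the homotopy that identifies the $\mathbf I\to\mathbf N$ component with $V_1-V_2$ while keeping track of Alexander gradings. There I would work first in the filtered, fully blocked model and then lift to the $V_i$-weighted version.
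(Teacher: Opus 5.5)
Your outline is correct and follows the standard argument behind \cite[Theorem 4.6.19]{ozsvath2015grid}, which the paper quotes without proof. That argument consists of Cromwell's theorem, pentagon maps with hexagon homotopies for commutations, and the $\mathbf I/\mathbf N$ mapping-cone decomposition reducing a stabilization to $\mathrm{Cone}(V_1-V_2)$ on $\gc[V_1]$; these are the same ingredients the paper recalls in Lemmas~\ref{lemma:commut_F_h} and~\ref{lemma:stab_F_h}, and your diagonal reflection for orientation reversal is the isomorphism $\gc\to GC^-(\rho\G)$ of Proposition~\ref{prop:rhochain}. The one substantive input you leave implicit in the stabilization step is that the corner-counting map $\mathcal H_{X_2}\colon\mathbf N\to\mathbf I$ is a quasi-isomorphism, which is what lets the five lemma identify $GC^-(\G')$ with $\mathrm{Cone}(V_1-V_2)$.
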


\subsubsection{Grid representations for Legendrian knots}\label{sec:grid4leg}
The procedure illustrated in Figure~\ref{img:gridtoleg} demonstrate that any grid diagram $\G$ represents an oriented Legendrian knot (or link) $L(\G)$, where the $\pi/2$ rotation is used to avoid vertical tangencies. The $NE$ and $SW$ corners on the grid become cusps.
The crossing changes arise from the Legendrian condition imposed on crossings.

\begin{figure}[ht]
	\centering
	\begin{tikzpicture}
		\draw (0,0) node[above right]{\includegraphics[width=0.97\linewidth]{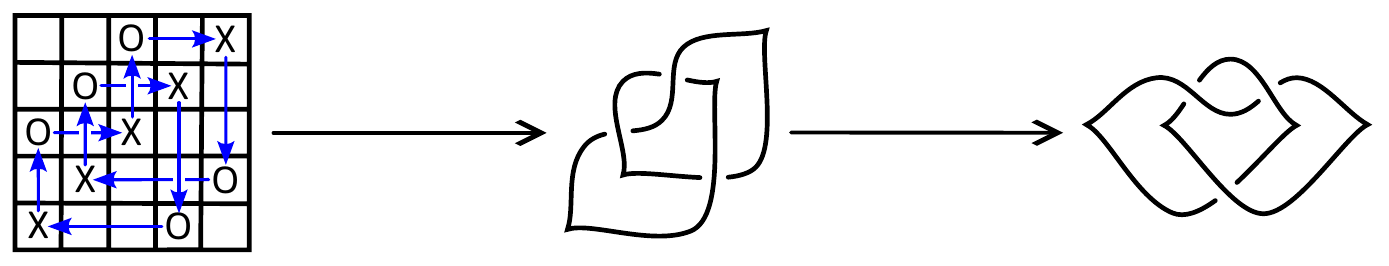}};
		\draw (3.8,1.7) node (a) [scale=1] {smooth NW,SE};
		\draw (3.8,0.9) node (a) [scale=1] {cuspify SW,NE};
		\draw (8.35,1.7) node (a) [scale=1] {45° $\curvearrowright$};
		\draw (8.45,0.9) node (a) [scale=1] {crossings change};
	\end{tikzpicture}
	\caption{Associating a Legendrian knot with a planar grid.}
	\label{img:gridtoleg}
\end{figure}

Let $\overline{\G}$ denote a planar realization of a toroidal grid $\G$, and let $L(\G)$ be the Legendrian knot type determined by the above procedure. The knot $L(\G)$ is referred to as the \emph{Legendrian knot associated to} $\G$. This knot type is well-defined, as it does not depend on the choice of $\overline{\G}$.

With additional effort compared to the classical setting (see \cite{ozsvath2015grid}), it can be shown that for any Legendrian knot type $\mathcal{L}$, there exists a grid diagram $\G$ such that $\mathcal{L}$ is the Legendrian knot associated to $\G$. Therefore, it is necessary to determine when two grids represent the same knot. The following result is the analog of Theorem~\ref{thm:cromwell} in the Legendrian case.
\begin{thm}{\cite[Theorem 12.2.6]{ozsvath2015grid}}\label{thm:cromwellleg}
	Two toroidal grid diagrams represent the same Legendrian knot type if and only if they can be connected by a finite sequence of commutations and (de)stabilizations of type X:NW, O:NW, X:SE, and O:SE on the torus. 
\end{thm}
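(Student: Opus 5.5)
The statement to prove is Theorem~\ref{thm:cromwellleg}: two toroidal grids represent the same Legendrian knot type if and only if they are related by commutations and (de)stabilizations of type X:NW, O:NW, X:SE, O:SE on the torus. I would prove it by comparing grid moves with the Legendrian Reidemeister moves on fronts. By Figure~\ref{img:gridtoleg}, a planar grid gives a front: NE and SW corners become cusps, NW and SE corners are smoothed, and the whole picture is rotated by $45^\circ$. Two fronts represent the same Legendrian knot if and only if they are related by planar isotopies that create no vertical tangencies and by the three Legendrian Reidemeister moves.

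\textbf{Soundness of the moves.} First I show that each allowed move preserves the Legendrian type.
\begin{itemize}
\item A commutation, read on the rotated front, is a front isotopy. Possibly it is also a Legendrian Reidemeister II or III move, since the intervals of the two switched columns are either nested or disjoint.
\item A stabilization of type NW or SE inserts a small kink whose new corners are a NW/SE pair plus one NE/SW pair. In the front this is a Legendrian Reidemeister I move, or a cusp isotopy, and it adds no zigzag.
\item Stabilizations of type NE or SW are excluded because they would add two new cusps, forming a zigzag. That is exactly a Legendrian stabilization.
\end{itemize}
I also check that changing the planar realization of the toroidal grid preserves the Legendrian type. Cyclically moving the top row to the bottom changes the front by a Legendrian isotopy. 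The standard argument is that this equals a sequence of commutations and NW/SE (de)stabilizations, or one checks directly that the front changes by pushing a strand "around the back". This is where the torus formulation is needed.

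\textbf{Completeness.} Conversely, suppose two grids give Legendrian-isotopic fronts. I take a generic sequence of Legendrian Reidemeister moves between the two fronts. I approximate every intermediate front by a grid-like front, that is, a rotated rectilinear front. Each step then needs to be realized by allowed grid moves:
\begin{itemize}
\item A planar isotopy between grid-like fronts with the same combinatorics is realized by commutations, together with NW/SE stabilizations used to add extra corners where needed.
\item A Legendrian Reidemeister I move is realized by one X:NW, O:NW, X:SE or O:SE stabilization.
\item Reidemeister II and III moves are realized by sequences of commutations, possibly after stabilizing to create room.
\end{itemize}

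\textbf{Main obstacle.} The hard part is proving that two grid representatives of the same front are connected using only the allowed stabilizations. Here I would first normalize both grids so that every cusp corresponds to a single NE or SW corner. Then I would compare rectilinear approximations by refining to a common subdivision, adding corners only through NW/SE stabilizations. I would also need to check that commutations never secretly create a zigzag.
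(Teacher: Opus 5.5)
Your outline follows the standard argument behind this result. The paper does not prove it but quotes it from \cite{ozsvath2015grid}, and adapts the same scheme in Theorem~\ref{thm:cromwellmio}: grid moves are matched with Legendrian Reidemeister moves, the Legendrian type is checked to be independent of the planar realization, and the step you flag as the main obstacle is handled there by bringing any grid, through allowed moves, to a normal form that is recovered from its own front \cite[Lemma~B.4.18]{ozsvath2015grid}. Your corner bookkeeping should be corrected, although your conclusions are right and nothing depends on it: depending on type and corner, an NE/SW-type stabilization gives either a zigzag or a kink with a single cusp and a crossing (both lower $tb$ by one), while an NW/SE-type one gives either two extra smooth corners or two extra cusps together with a crossing, i.e.\ a Legendrian Reidemeister I move.
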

The (de)stabilizations listed in Theorem~\ref{thm:cromwellleg} are referred to as \emph{Legendrian (de)stabilizations}.


\section{Equivariant Grid Homology}\label{chp:eq_grid}
This section shows how a strong inversion can be represented on
a grid diagram and how it induces an involution on the grid chain
complex, leading to a symmetry-refined version of grid homology.
Following \cite{lobb2021refinement,dai_mallick_stoffregen,sano2024involutive}, in Section~\ref{sec:cono} we study the mapping cone complex of the morphism given by the sum of the identity map and of the involution, showing in Theorem\ref{thm:invariance} that its homology is a knot invariant. 
Building on this new invariant, Section~\ref{sec:altri_inv} extracts simpler invariants, providing an equivariant analogue of the tau invariant \cite{KFfour-ball} and of the canonical cycles \cite{legendriangridinvaria}. 

Given a strongly invertible knot $(K,\rho)$, it is straightforward to observe that not all the grid diagrams $\G$ representing $K$ are suitable for the formulation of an equivariant grid theory.
Only grids which reflect the symmetry property of $(K,\rho)$ are considered in this framework. 
The theory is first developed for \sil knots (\cite{collarilisca}) and subsequently generalized to \si knots.

 \subsection{Equivariant grid representations}\label{skiphere}
 Initially, it is necessary to determine which grid diagrams represent \sil knots.  
 \begin{defn}\label{def:sym_grid}
 	We call \emph{symmetric grid diagram} any grid diagram $\G$ which satisfies the following two conditions:
 	\begin{itemize}
 		\item $\G$ is symmetric with respect to the SW to NE axis. 
 		\item Among the squares lying along the SW to NE axis, exactly two are marked, either with an $O$-marking or with an $X$-marking.
 	\end{itemize}
 \end{defn}
 Later on, we will work with grids representing symmetric links and not only \si knots. In that context, the second condition will be dropped.
 Given a symmetric grid diagram, the procedure described in Section~\ref{sec:grid4leg} and illustrated in Figure~\ref{img:gridtoleg}, yields a \sil knot. The resulting front is fixed setwise by the reflection $(x,z) \rightarrow (x,-z)$ and intersects the symmetry axis in exactly two points, hence is a transvergent front. 
 
 Notice that, these grids were first used to represent transvergent diagrams of \si knots by Yonghan Xiao in \cite{xiao2026real}. For an analogous discussion in the case of intravergent diagrams, see \cite{parikh2024localization}.
 
 Conversely, the following result demonstrates that every \sil knot admits a representation as a symmetric grid diagram.

 \begin{prop}\label{prop:frontetogrid}
 	Let $K\in\R$ be a \sil knot. There exists a symmetric grid diagram $\G$ such that $K$ is the \sil knot associated to $\G$.
 	\begin{proof}
 		Consider a transvergent front $\mathcal{F}$ for $K$. The rough idea is to modify this front by equivariant planar isotopies to obtain a PL-diagram $D$ in general position (that is, with no colinear segments). As we see in Figure~\ref{img:gridtodiag} steps, we can obtain a grid $\G$ from such a diagram $D$. We want $\G$ to be symmetric.
 		Since we only performed equivariant planar isotopies, we will get that $L(\G)$ is equivalent to $K$ as a \sil knot.
 		
 		Fix a transvergent front $\mathcal{F}$ for $K$, with the assumptions of Remark~\ref{rem:genericità}, apply a $\pi/4$ counterclockwise rotation on it and switch every crossing. Recall the Legendrian knot $L(\G)$ represented by $\G$ is the mirror of the topological knot associated with $\G$.
 		
 		Consider a square $S$ such that $\mathcal{F}\subseteq S$, and $\mathcal{F}$ is symmetric with respect to $d$, the bottom-left to top-right diagonal of $S$. Up to equivariant perturbations, namely equivariant planar isotopies, we can suppose that there exists a subdivision of $S$ in \emph{frames} $\{S_i\}_{i=1}^m$, see Figure~\ref{img:frame}, such that:
 		\begin{itemize}
 			\item $S_1$ contains a $SW$ cusp and $S_m$ contains a $NE$ cusp;
 			\item for each $i=2,\dots,n-1$, $S_i$ either contains at most one cusp or crossing on $d$, or it contains at most one couple of symmetric cusps or symmetric crossings.
 		\end{itemize}
 		 \begin{figure}
 			\centering
 			\begin{subfigure}[c]{0.2\textwidth}
 				\centering
 				\includegraphics[width=\textwidth]{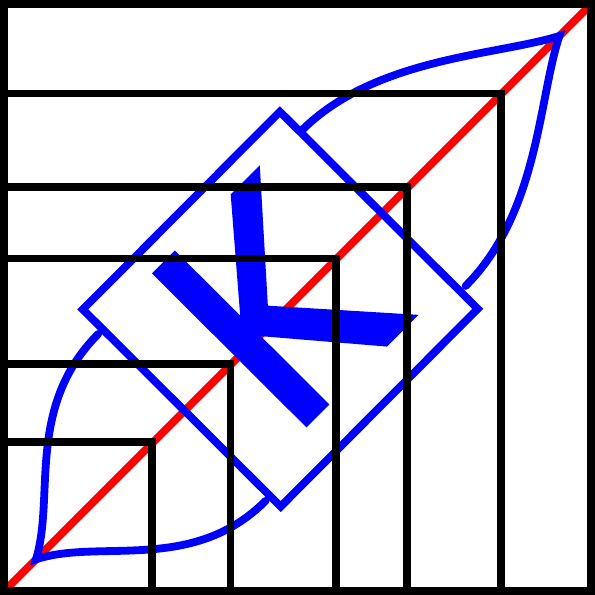}
 				\caption{Frames on a square.}
 				\label{img:frame}
 			\end{subfigure}
 			\hfill
 			\begin{subfigure}[c]{0.7\textwidth}
 				\centering
 				\includegraphics[width=\textwidth]{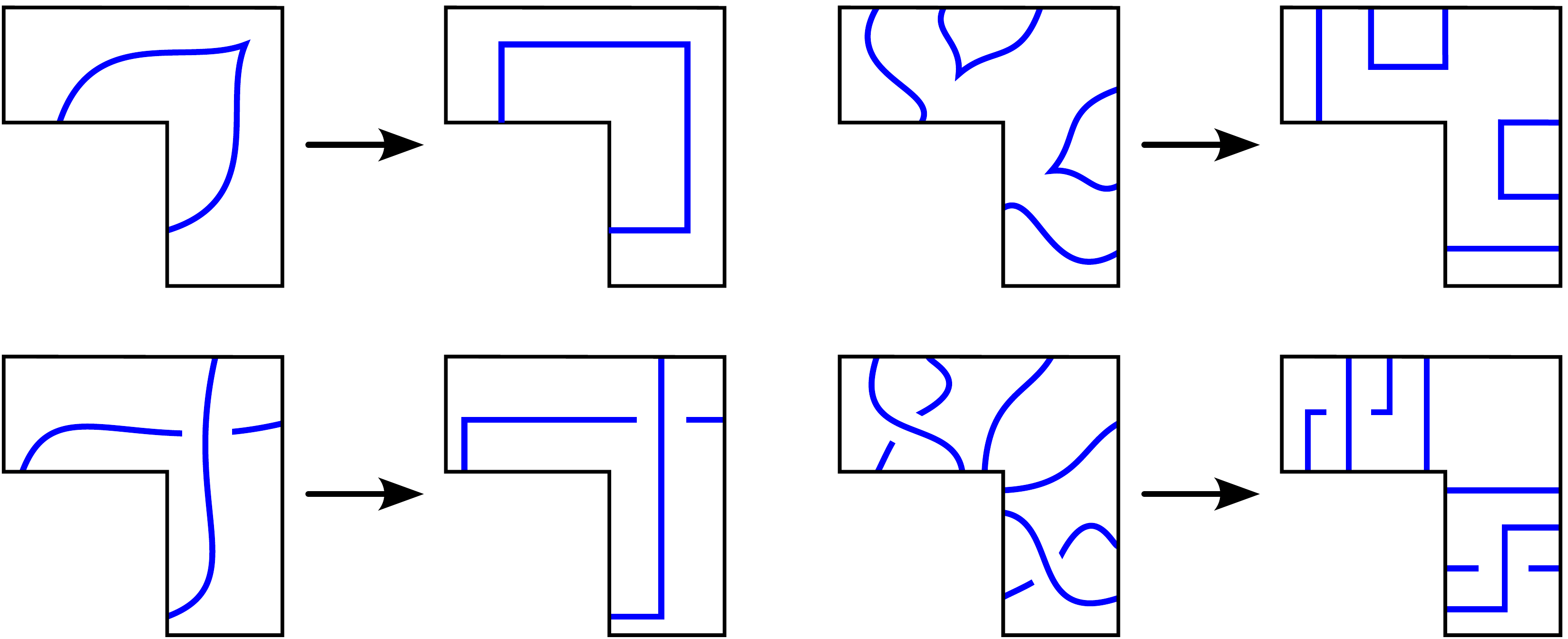}
 				\caption{Approximating by a grid diagram.}
 				\label{img:approx}
 			\end{subfigure}
 			\hfill
 			\caption{}
 		\end{figure}
 		Now we work on each frame separately. If $S_i$ does not contain cusps or crossings, replace the strands in $S_i$ above $d$ by vertical segments and the one below $d$ by horizontal ones, symmetrically. Otherwise, replace the strands with an equivariant planar isotopic picture that above (resp. below) $d$ contains only one horizontal (resp. vertical) arc and all other vertical (resp. horizontal) segments. See Figure~\ref{img:approx}.
 		
 		Now, starting from $i=2$, fit the pieces together, stretching $S_i$ so that the endpoints of the strands that lie on $\partial S_i \cap \partial S_{i-1}$ match with the ones coming from $S_{i-1}$. Up to small perturbations, the resulting PL diagram will be in general position. Of course, these perturbations are equivariant planar isotopies and do not change the \sil type of the diagram.
 		
 		At last, assume the bottom-left corner of $S\subseteq \mathbb{R}^2$ is the origin. Let $n$ be the number of horizontal strands and enumerate them from bottom to top. Since the diagram is symmetric, we also have $n$ vertical strands, enumerated from left to right. For each $i=1,\dots,n$, simultaneously move, by equivariant planar isotopies, the $i$-th vertical strand to one whose $x$-coordinate is $i-1/2$ and the $i$-th horizontal strand to one whose $y$-coordinate is $i-1/2$.
 		
 		Performing the steps on the last three pictures in Figure~\ref{img:gridtodiag}, we obtain the desired symmetric grid $\G$.
 	\end{proof}
 \end{prop}
 
The first essential result is a \sil version of Cromwell's theorem (Theroem~\ref{thm:cromwell}). The appropriate grid moves are defined below.
 \begin{defn}\label{orbo}
 	An \emph{equivariant commutation} (resp. \emph{(de)stabilization}) is a pair of commutations (resp. (de)stabilizations) symmetric with respect to the SW to NE axis. We call a \emph{\sil grid move}, or \emph{equivariant Legendrian grid move}, any composition of grid moves from the set of equivariant commutations, equivariant Legendrian (de)stabilizations, moves in Figure~\ref{img:silgm} (described below), their $\pi$-rotated, and the same moves with switched $O$ and $X$-markings.
 	
 	The move $CX$ is obtained via two stabilizations and a sequence of commutations. The first step is a $X:NW$ stabilization. An $X:SE$ stabilizations on the top-right $X$-marking follows. This pair of moves create a non-symmetric untwisted kink. Via a suitable sequence of commutation moves, one makes the kink symmetric and make it move across the crossing.
 	The move $XX$ is obtained via a sequence of $3$ vertical commutations (bringing the rightmost column to the leftmost position) and a sequence of $3$ horizontal commutations (bringing the top row to the bottom).
 	The move $CC$ is obtained via one pair of vertical commutations (bringing the leftmost column to the rightmost position) and one pair of horizontal commutations (bringing the bottom row to the top). 
 	The move $CR$ is obtained through two stabilizations and a commutation. First, perform an $X:NE$ stabilization. Then an $X:SE$ stabilization on the bottom-left $X$-marking, hence creating a kink. Finally, a row commutation to bring the upper horizontal strand of the kink to the top.
    Notice that some of these moves, for example $XX$ and $CC$, alter the position of some markings outside the depicted square. Such markings move accordingly under the performed commutation moves.
    
 \end{defn}
   The list of moves from Definition~\ref{orbo} is not minimal, it is fixed in analogy with \cite{collarilisca}. 
 \begin{figure}[ht]
 	\centering
 	\begin{tikzpicture}
 		\draw (0,0) node[above right]{\includegraphics[width=0.95\linewidth]{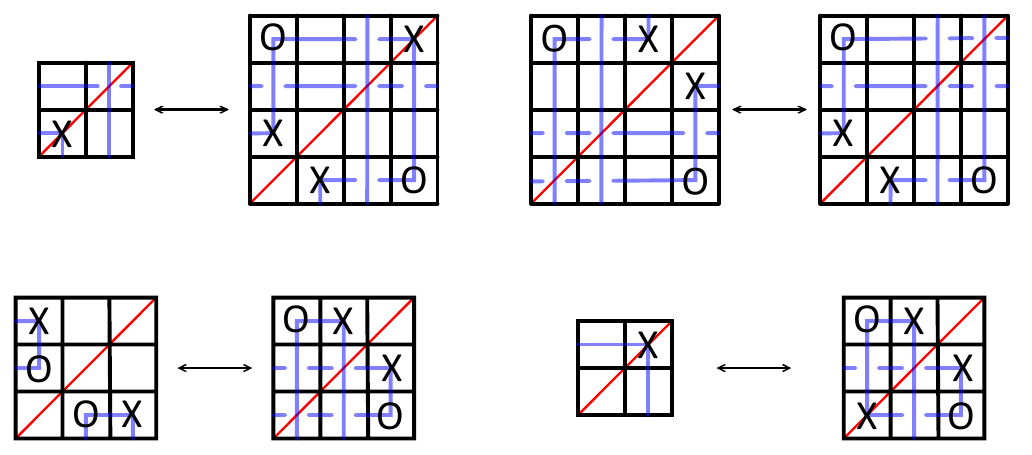}};
 		\draw (2.33,4.5) node (a) [scale=1] {CX};
 		\draw (2.63,1.48) node (a) [scale=1] {CC};
 		\draw (9.11,4.5) node (a) [scale=1] {XX};
 		\draw (8.9,1.48) node (a) [scale=1] {CR};
 	\end{tikzpicture}
 	\caption{Strongly invertible Legendrian grid moves. In red, the SW to NE axis, in translucent blue, the knot portion.}
 	\label{img:silgm}
 \end{figure}
 
 \begin{thm}\label{thm:cromwellmio}
 	Let $\G$ and $\G'$ be two symmetric grids, and call $K=L(\G)$ and $K'=L(\G')$.
 	Then $K$ and $K'$ are \sil isotopic if and only if $\G$ and $\G'$ can be connected by a finite sequence of \sil grid moves.
 	\begin{proof}
 		The strategy from the proof of \cite[Theorem~B.4.15]{ozsvath2015grid} is adapted, specifically by defining two maps:
 		\[
 		\begin{array}{c}
 			\faktor{\{ \text{\sil  knots}\}}{\text{\footnotesize \sil isotopy}} \\[8pt]
 			f\Big\downarrow \;\;\Big\uparrow g \\[8pt]
 			\faktor{\{\text{symmetric grid diagrams}\}}{\text{\footnotesize  \sil  grid moves}}
 		\end{array}
 		\]
 		
 		such that $f\circ g $ and $g\circ f$ are both the identity maps of the respective quotient spaces.
 		We show that the procedures we described to associate a \sil knot to a symmetric grid and vice versa induce two such maps.
 		
 		The construction of the map $g$ is established first. 
 		Call $\mathcal{F}$ the transvergent front obtained from $\G$. To show that $g$ is well defined on the quotient spaces, it suffices to show that when we apply any \sil grid move to $\G$, the associated front $\mathcal{F}$ undergoes a move from the list in \cite[Theorem~1.3]{collarilisca}, that is, the \sil version of the Reidemeister theorem.
 		
 		If $\G$ and $\G'$ are connected by an equivariant commutation, then $\mathcal{F}$ and $\mathcal{F}'$ are connected by an equivariant planar isotopy, an equivariant Legendrian Reidemeister 2 move, or an equivariant Reidemeister 3 move. 
 		In the case of an equivariant Legendrian (de)stabilization, $\mathcal{F}$ undergoes either an equivariant planar isotopy or an equivariant Legendrian Reidemeister 1 move.
 		The moves from Figure~\ref{img:silgm} correspond exactly to the ones in \cite[Figure~2]{collarilisca}.
 		
 		The observations are analogues for $f$. By \cite[Lemma~B.4.16]{ozsvath2015grid} we know that, if $\mathcal{F}$ and $\mathcal{F'}$ are two transvergent fronts that differ by planar isotopies, then the associated grids $\G$ and $\G'$ differ by a sequence of commutations. If the isotopy is equivariant, then so is the sequence of commutations. It follows that we can examine the remaining cases up to equivariant planar isotopy. As we apply the moves from \cite[Figure~2]{collarilisca} to the fronts, the grids undergo the corresponding moves shown in Figure~\ref{img:silgm}. Equivariant Reidemeister 2 and 3 moves can be realized by equivariant commutation moves, and equivariant Legendrian (de)-stabilizations can realize equivariant Reidemeister 1 moves.
 		
 		It is straightforward to observe that the composition $g\circ f$ is the identity. From the proof of Proposition~\ref{prop:frontetogrid}, it follows that for any \sil knot $K$, the \sil type of the knot represented by $f(K)$ coincides with that of $K$. It remains to show that, for any symmetric grid $\G$, there exists a finite sequence of \sil grid moves connecting $\G$ to $f(g(\G))$. 
 		
 		Call $d$ the SW to NE axis and suppose that $\G$ is such that:
 		\begin{itemize}
 			\item a horizontal segment above $d$ with a SW corner is a local minimum, and one with a NE corner is a local maximum;
 			\item each horizontal segment above $d$ contains at most one crossing;
 			\item a horizontal segment above $d$ is a local maximum or a local minimum if and only if it contains no crossings.
 		\end{itemize}
 		By symmetry, all the vertical segments below $d$ share the same properties. This is a \sil analogue to \cite[Definition~B.4.17]{ozsvath2015grid}.
 		Note that, in this case, it is obvious that $f(g(\G))$ is equivalent to $\G$. If we apply the construction from Proposition~\ref{prop:frontetogrid} to $g(\G)$, we are simply performing the steps that associate a transvergent front to a symmetric grid in the opposite order.    
 		
 		Finally, every symmetric grid $\G$ is connected by \sil grid moves to a symmetric grid that satisfies the properties listed above. This is achieved by applying the steps in the proof of \cite[Lemma~B.4.18]{ozsvath2015grid} to the portion of $\G$ above $d$, while performing symmetric modifications on the portion below $d$.
 	\end{proof}
 \end{thm}
 
 Let $\G$ be a symmetric grid diagram. Since the moves from Figure~\ref{img:silgm} can be expressed as a finite sequence of grid moves, $\gh$ is a \sil invariant of the \sil knot associated to $\G$. More generally, if $K$ is a strongly invertible knot, \cite[Proposition 2.1]{collarilisca} ensures that it can be assumed to be in Legendrian position. Consequently, Proposition~\ref{prop:frontetogrid} allows symmetric grids to represent strongly invertible knots. Theorem~\ref{thm:cromwellmio} can be adapted to this setting by incorporating and additional grid move called \text{S}-stabilization, as defined in \cite[Definition 4.1]{collarilisca}. 
 
 \begin{figure}
 	\centering
 	\includegraphics[width=0.25\linewidth]{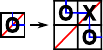}
 	\caption{An instance of an S-stabilization.}
 	\label{fig:s-stab}
 \end{figure}
 
 \begin{defn}
 	We call \emph{\text{S}-stabilization} a grid stabilization on the SW to NE axis as in Figure~\ref{fig:s-stab}, its $\pi$-rotated, or the same moves with switched $O$ and $X$-markings. 
 \end{defn}
 Note that \text{S}-stabilizations are $O:SW$, $X:SW$, $O:NE$, and $X:NE$ stabilization along the SW to NE axis.
 
 \begin{thm}\label{thm:cromwellSI}
 	Let $K$ and $K'$ be two strongly invertible knots that we assume to be in Legendrian position. Let $\G$ and $\G'$ be two symmetric grids such that $K=L(\G)$ and $K'=L(\G')$.
 	Then $K$ and $K'$ are equivariantly isotopic if and only if $\G$ and $\G'$ can be connected by a finite sequence of \sil grid moves and \text{S}-stabilizations.
 	\begin{proof}
 		The listed moves preserve the strongly invertible type of the represented knots. Vice versa, assume that $K$ and $K'$ are equivariantly isotopic. If the \sil type of $K$ and $K'$ is the same, Theorem~\ref{thm:cromwellmio} assures us that we find a sequence of \sil grid moves connecting $\G$ and $\G'$. Otherwise, \cite[Theorem 1.4]{collarilisca} states that after sufficiently many S-stabilizations, the \sil types of $K$ and $K'$ coincide. While the fronts of $K$ and $K'$ undergo these stabilizations, the grids $\G$ and $\G'$ undergo the corresponding grid stabilizations of the non-Legendrian type. 
 		After stabilizing, we end up in the previous case.
 	\end{proof} 
 \end{thm}
 The set of \emph{\si grid moves}, or \emph{equivariant grid moves}, is defined as the union of: \sil grid moves, S-(de)stabilizations, and equivariant (de)stabilizations of the non-Legendrian types (that is: X:SW, O:SW, X:NE, O:NE).
 To show that the latter moves preserve the \si type of the associated knot, one proceeds as in the Legendrian case.
 
 Notice that a version of Theorem~\ref{thm:cromwellSI} with a different set of moves was first stated in \cite[Proposition 3.3]{xiao2026real}.
 
 \subsection{Construction of the involution}
 Let $(K,\rho)$ be a \si knot, assumed to be in Legendrian position. There exists a symmetric grid $\G_K$ representing $K$. The goal is to define a bigraded chain map $\rho_\G:GC^-(\G_K) \rightarrow GC^-(\G_K)$ that, in a suitable sense, is induced by the strong involution $\rho:S^3\rightarrow S^3$.
 In this construction, the fixed point circle of the strong involution $\rho$ corresponds to the SW to NE axis on the grid.
 
 Let $\G$ be any planar grid diagram (therefore, not necessarily symmetric). Let $\rho \G$ be the grid obtained from $\G$ by reflection along the SW to NE axis. We define $\rho:\St \rightarrow \mathbf{S}(\rho \G)$ as the map that sends a state $\xx\in\St$ to its reflection along the SW to NE axis $\rho(\xx)\in\mathbf{S}(\rho \G)$. It is straightforward to observe that the map $\rho$ is $(0,0)$-homogeneous, namely it preserves the bigrading. 
 
 \begin{figure}
 	\centering
 	\begin{tikzpicture}
 		\draw (0,0) node[above right]{\includegraphics[width=0.42\linewidth]{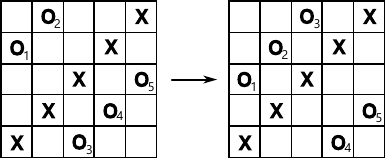}};
 		\draw (2.9,1.48) node (a) [scale=1] {$\rho$};
 		\draw (7.2,2.6) node (a) [scale=1] {$\rho(1) = 4$};
 		\draw (7.2,2) node (a) [scale=1] {$ \rho(2) = 5$};
 		\draw (7.2,1.4) node (a) [scale=1] {$ \rho(3) = 1$};
 		\draw (7.2,0.8) node (a) [scale=1] {$\rho(4) = 2 $};
 		\draw (7.2,0.2) node (a) [scale=1] {$\rho(5) = 3$};
 	\end{tikzpicture}
 	\caption{The action of $\rho$ on a non symmetric grid and description of the permutation $\rho$.}
 	\label{img:ex_permro}
 \end{figure}
 
 Care must be taken when extending $\rho$ to the entire complex $\gc$. Extending it as a $\F[V_1,\dots,V_n]$-module map does not yield a chain map. Specifically, let $\{O_i\}_{i=1}^n$ and $\{O'_i\}_{i=1}^n$ denote the $\Oo$-sets of $\G$ and $\rho\G$, respectively. By convention, both $\Oo$-sets are enumerated left-to-right, so $\rho$ induces a permutation of $n$ elements (see Figure~\ref{img:ex_permro}), namely: \[O'_{\rho(i)} = \rho(O_i).\] There is a correspondence between empty rectangles $r$ from $\xx$ to $\yy$ and empty rectangles $\rho(r)$ from $\rho(\xx)$ to $\rho(\yy)$, but whenever $O_i(r)=1$, only $O'_{\rho(i)}(\rho(r))=1$ is guaranteed, and the value of $O'_i(\rho(r))$ is not determined. 
 
 What we do to fix this issue is to consider $\gc$ as an infinite-dimensional $\F$-vector space generated by the set \[\{ V_1^{k_1}\cdots V_n^{k_n}\xx \;|\; \xx\in\St,\; k_i\in\Z\;\forall i=1,\dots,n \}.\] For any $i=1,\dots,n$, we set $\rho(V_i)=V_{\rho(i)}$. We know how $\rho$ behaves on the elements of the base we fixed:
 \[ \rho(V_1^{k_1}\cdots V_n^{k_n}\xx) = V_{\rho(n)}^{k_1}\cdots V_{\rho(n)}^{k_n}\rho(\xx), \]
 so we can extend it linearly to obtain an $\F$-linear map $\rho:\gc\rightarrow GC^-(\rho\G)$. It is straightforward to observe that $\rho$ is an isomorphism of $\F$-vector spaces.  
 \begin{prop}\label{prop:rhochain}
 	The above $\F$-linear map $\rho$ is a chain map.
 	\begin{proof}
 		We want to show that $\rho\circ\partial_\X^- = \partial_\X^-\circ\rho$, where we consider the morphisms as linear maps between $\F$-vector spaces. We are using the same symbol for the differential of both complexes $\gc$ and $GC^-(\rho\G)$ to avoid an unnecessarily heavy notation. 
 		
 		Let $\xx \in \St$. Then:
 		\[ \rho\circ\partial_\X^-(\xx) = \sum_{y\in\St}\sum_{\left\{r\in \text{Rect}^\circ_\G(\xx,\yy) \;|\; r\cap\X = \emptyset\right\}} V_{\rho(1)}^{O_1(r)}\cdots V_{\rho(n)}^{O_n(r)}\cdot \rho(\yy).  \]
 		While:
 		\begin{align*}
 			\partial_\X^-\circ\rho(\xx) &=  \sum_{y\in\mathbf{S}(\rho \G)}\sum_{\left\{s\in \text{Rect}^\circ_{\rho\G}(\rho(\xx),\yy) \;|\; r\cap\X = \emptyset\right\}} V_1^{O_1(s)}\cdots V_n^{O_n(s)}\cdot \yy \\
 			= \sum_{y\in\St}&\sum_{\left\{\rho(r)\in \text{Rect}^\circ_{\rho\G}(\rho(\xx),\rho(\yy)) \;|\; r\cap\X = \emptyset\right\}} V_1^{O_1(\rho(r))}\cdots V_n^{O_n(\rho(r))}\cdot \rho(\yy) \\  
 			&= \sum_{y\in\St}\sum_{\left\{r\in \text{Rect}^\circ_{\G}(\xx,\yy) \;|\; r\cap\X = \emptyset\right\}} V_1^{O_{\rho(1)}(r)}\cdots V_n^{O_{\rho(n)}(r)}\cdot \rho(\yy) \\
 			&= \sum_{y\in\St}\sum_{\left\{r\in \text{Rect}^\circ_\G(\xx,\yy) \;|\; r\cap\X = \emptyset\right\}} V_{\rho(1)}^{O_1(r)}\cdots V_{\rho(n)}^{O_n(r)}\cdot \rho(\yy).
 		\end{align*}
 	\end{proof}
 \end{prop}
 
 \begin{remark}
 	Adopting the same notation for $\rho:\gc \rightarrow GC^-(\rho\G)$ and $\rho: GC^-(\rho\G) \rightarrow \gc$, it is immediate to note that, independently of which of the two compositions we are considering, $\rho^2=Id$. Observe how, in the case of a symmetric grid $\G$, we defined an $\F$-linear involution:
 	\[ \rho:\gc \rightarrow \gc. \]
 \end{remark}
 
 \begin{remark}
 	Notice that, in view of \cite[Lemma 2.10]{hendricks2025note}, which allows one to shift the markings so that they become fixed points, our involution $\rho$ appears to agree, in the multipointed setting, with the involution $\iota_K\tau_K$ studied in \cite{dai_mallick_stoffregen}. The precise relation between our forthcoming cone construction $\hc$ (see Section~\ref{sec:cono}) and the corresponding construction in \cite{dai_mallick_stoffregen} remains to be investigated.
 \end{remark}
 
 The following proposition is central to the definition of equivariant invariants of \si knots using $\rho$.
 
 \begin{thm}\label{prop:key}
 	Let $\G$ and $\G'$ be two symmetric grids together with the two involutions $\rho:\gc \rightarrow \gc$ and $\rho': GC^-(\G') \rightarrow GC^-(\G')$. Suppose that $\G$ and $\G'$ are connected by a \si grid move $F$ and let $F:\gc \rightarrow \gcc$ be the morphism induced between the complexes.
 	Then the compositions $F\circ \rho$ and $\rho' \circ F$ are chain-homotopic.
 	\begin{proof}
 		Given a non-equivariant grid move between two non-necessarily symmetric grids $c:GC^-(\G_1) \rightarrow GC^-(G_2)$, consider the composition:
 		\[ \overline{c} = \rho_2 \circ c \circ \rho_1: GC^-(\rho \G_1) \rightarrow GC^-(\rho G_2). \]
 		One can easily check that $\overline{c}$ is still a grid move. In particular, it is the grid move symmetric to $c$ under the reflection along the SW to NE axis.
 		
 		As any \si grid move, $F$ is a composition of grid moves, namely $F=c_k\circ\dots\circ c_1$.
 		It follows that $\rho'\circ F \circ \rho$ is also a composition of grid moves:
 		\begin{align*}
 			\rho'\circ F \circ \rho &= \rho'\circ c_k\circ\dots\circ c_1 \circ \rho \\&=  \rho'\circ c_k\circ \rho_k \circ \rho_k \circ c_{k-1} \circ \dots \circ c_2\circ \rho_2 \circ \rho_2 \circ c_1 \circ \rho \\&= \overline{c}_k \circ \dots \circ \overline{c}_1.
 		\end{align*} 
 		
 		By \cite[Section 5]{sarkar2015moving} we know that, up to chain homotopy, the maps induced on the grid complex by grid moves are the natural maps defined in \cite[Theorem 2.33]{jua_thu_zem}. Since the natural maps define a transitive system, the same holds for the composition of grid moves.
 		More precisely, \cite{sarkar2015moving} and \cite{jua_thu_zem} state that the induced maps are natural at the level of chain homotopy classes of chain maps, not only at the level of homology (See also \cite[Proposition 6.1]{hendricks2017involutive}).
 		The claim follows, as $F$ and $\rho'\circ F \circ\rho$ are both natural maps, unique up to chain homotopy; hence, $\rho'\circ F $ and $ F \circ\rho$ are chain homotopic.

 	\end{proof}
 \end{thm}
 
 This proposition establishes the first invariant in this context.
 
 \begin{defn}
 	Let $\G$ be a symmetric grid and call $\rho_*:\gh\rightarrow\gh$ the map induced in homology by $\rho:\gc \rightarrow \gc$. We call the pair $(\gh,\rho_*)$ the \emph{equivariant grid homology} module of $\G$. 
 \end{defn}
 
 \begin{cor}
 	Let $\G$ and $\G'$ be two symmetric grids such that $L(\G)$ and $L(\G')$ are equivalent as \si knots. There exists a isomorphism  $\psi:\gh \rightarrow \ghh$ such that $\rho' = \psi \circ \rho \circ \psi^{-1}$.
 	\begin{proof}
 		We only have to prove the thesis when $\G$ and $\G'$ are connected by a \si grid move $F$. 
 		We already know that $F$ induces an isomorphism $F_*:\gh \rightarrow \ghh$ and, by Theorem~\ref{prop:key}, $\rho'\circ F_* = F_* \circ \rho$. Hence $\psi=F_*$.
 	\end{proof}
 \end{cor}

 \section{The homology of the Cone complex}\label{sec:cono}
 Following \cite{dai_mallick_stoffregen,sano2024involutive}, consider the chain map:
 \[ Id+\rho : \gc \rightarrow \gc. \]
 The \emph{mapping cone} $\cono$ is the bigraded complex:
 \[  \cono_{d,s} = \left( GC^-_{d-1}(\G,s)\oplus GC^-_{d}(\G,s), \partial =\begin{pmatrix}
 	-\partial_\X^- & 0 \\ (Id+\rho) & \partial_\X^-
 \end{pmatrix} \right). \]
 Using a minus sign does not make any difference since we work on the field with two elements, but the definition holds in more general settings. Since $\rho$ is not a module map, so far, $\cono$ is an $\F$-vector space. However, we can endow it with a module structure. To do so, we will use the following well-known result.
 \begin{lemma}{\cite[Lemma A.3.8]{ozsvath2015grid}}\label{lem:induced_map}
 	Let $C, C', E, E'$ be four bigraded complexes, and suppose that there are chain maps fitting into the square:
 	\[\begin{tikzcd}
 		C \arrow{r}{f} \arrow[swap]{d}{\phi} & 
 		C' \arrow{d}{\phi'} \\ E \arrow{r}{g} & 
 		E'.
 	\end{tikzcd}
 	\]
 	that commutes up to homotopy; i.e. the map $\phi'\circ f$ is chain homotopic to the $g\circ \phi$.
 	Suppose moreover that $\phi$ and $\phi'$ are bigraded maps, $f$ and $g$ are homogeneous of bidegree $(m,t)$, and the homotopies are compatible with these gradings.
 	Then, there is an induced bigraded chain map $\Phi:\text{Cone}(f) \rightarrow \text{Cone}(g)$ that fits into the following commutative diagram of short exact sequences:
 	\[\begin{tikzcd}
 		0 \arrow{r}{} & C \arrow{r}{i} \arrow[swap]{d}{\phi} & \text{Cone}(f) \arrow{r}{p} \arrow[swap]{d}{\Phi}& 
 		C' \arrow{r}{} \arrow{d}{\phi'} & 0  \\ 
 		0 \arrow{r}{} & E \arrow{r}{j} & \text{Cone}(g) \arrow{r}{q} & E' \arrow{r}{} & 0.
 	\end{tikzcd}
 	\]
 	If $\phi$ and $\phi'$ are quasi-isomorphisms, so is $\Phi$.
 \end{lemma}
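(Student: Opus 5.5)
We prove the lemma by writing $\Phi$ down explicitly and checking each claim directly. Let $H: C\rightarrow E'$ be a homotopy with $\phi'\circ f + g\circ\phi = \partial_{E'} H + H\partial_C$. We use $\F$ coefficients, or insert the usual signs in general. Write $\text{Cone}(f)=C\oplus C'$ with differential $\begin{pmatrix}-\partial_C & 0\\ f & \partial_{C'}\end{pmatrix}$, so that the inclusion $i$ goes into the first summand and $p$ projects onto the second. We then define
\[ \Phi=\begin{pmatrix}\phi & 0\\ H & \phi'\end{pmatrix}:C\oplus C'\rightarrow E\oplus E'. \]
The proof has four steps.

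\textbf{Chain map.} First we check that $\Phi$ is a chain map by multiplying out $\partial_{\text{Cone}(g)}\Phi$ and $\Phi\,\partial_{\text{Cone}(f)}$ block by block.
\begin{itemize}
\item The diagonal entries agree because $\phi$ and $\phi'$ are chain maps.
\item The upper-right entry vanishes on both sides.
\item The lower-left entry reads $g\phi+\partial_{E'}H$ on one side and $H\partial_C+\phi' f$ on the other (up to sign). These are equal exactly by the homotopy relation.
\end{itemize}

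\textbf{Gradings.} Next we check bigradings. In the cone the $C$ summand is shifted so that $f$ becomes a degree $-1$ component of the differential. Since $f$ and $g$ have the same bidegree $(m,t)$, the same shift is used on both cones. The homotopy $H$ has degree one more than $f$, compatibly with the gradings as assumed, and $\phi,\phi'$ are bigraded. Hence every entry of $\Phi$ is homogeneous of degree $(0,0)$ with respect to the cone gradings.

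\textbf{Commutative diagram.} The diagram of short exact sequences commutes by inspection:
\[ \Phi\circ i=j\circ\phi \quad\text{and}\quad q\circ\Phi=\phi'\circ p. \]
The first holds because the first column of $\Phi$ restricted to $C$ is $(\phi,H)$ and $j$ only sees the $E$ component. More carefully, $i(c)=(c,0)$ maps to $(\phi c,Hc)$, which is not $j(\phi c)=(\phi c,0)$. So we must fix conventions: we take $i$ to include $C'$ and $p$ to project onto $C$. Equivalently, we reorder the sequence as
\[ 0\rightarrow C'\rightarrow\text{Cone}(f)\rightarrow C[-1]\rightarrow 0. \]
The statement as displayed must be read with this standard convention up to relabeling, and then both squares commute on the nose.

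\textbf{Quasi-isomorphism.} Finally, if $\phi$ and $\phi'$ are quasi-isomorphisms, we apply naturality of the long exact sequence in homology to the morphism of short exact sequences. The five lemma then shows that $\Phi_*$ is an isomorphism.

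The only real obstacle is bookkeeping: getting signs and the orientation of the short exact sequence consistent, so that the off-diagonal entry $H$ sits where the squares still commute strictly. Over $\F$ the signs disappear and this is routine.
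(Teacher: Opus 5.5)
Your proposal is correct and matches what the paper relies on: it cites \cite[Lemma A.3.8]{ozsvath2015grid} and records exactly this $\Phi$, with the homotopy as the lower-left entry. The paper's displayed matrix has $f,g$ on the diagonal by a typo, and your $\phi,\phi'$ is the correct version. Your reorientation of the short exact sequence, with $C'$ as the subcomplex and $C$ as the quotient, is the right reading of the statement. It also agrees with the convention $i(c)=(0,c)$, $p(c',c)=c'$ that the paper itself uses in Proposition~\ref{prop:exact_seq}.
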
 
 The proof of Lemma~\ref{lem:induced_map} shows how, if $h:C \rightarrow E'$ is a homotopy between $\phi'\circ f $ and $g \circ \phi$, then:
 \[ \Phi := \begin{pmatrix}
 	f & 0 \\ h & g
 \end{pmatrix}. \]
 
 Let $\G$ be a symmetric grid, fix any index $i=1,\dots,n$, and observe that the square:
 \begin{equation}\label{eq:square}
 	\begin{tikzcd}
 		\gc \arrow{r}{Id+\rho} \arrow[swap]{d}{V_i} & 
 		\gc \arrow{d}{V_i} \\ \gc \arrow{r}{Id+\rho} & 
 		\gc.
 	\end{tikzcd}
 \end{equation}
 commutes up to homotopy, since a straightforward computation shows that:
 \begin{align*}
 	(Id+\rho)\circ V_i + V_i \circ (Id + \rho) &= \rho \circ V_i + V_i \circ \rho = \rho \circ V_i + \rho \circ V_{\sigma(i)} \\&= \partial \circ\rho\circ h_{i\sigma(i)} +  \rho\circ h_{i\sigma(i)}\partial,
 \end{align*} 
 where $h_{i\sigma(i)}$ is a chain homotopy from $V_i$ to $V_{\sigma(i)}$.
 Let us briefly recall the definition of the chain homotopy (\cite[Lemma 4.6.9]{ozsvath2015grid}). 
 Two variables $V_i$ and $V_j$ are \emph{consecutive} via $X_{i}$ if there exists an $X$-marking, which we will call $X_{i}\in\X$, in the same row as $O_i$ and in the same column as $O_j$. Note that every variable is consecutive to exactly two other variables. Suppose that $V_i$ and $V_j$ are consecutive, then a homotopy from the multiplication by $V_i$ to the multiplication by $V_j$ is given by:
 \begin{equation}\label{eq:omotopia}
 	h_{ij}(\xx) = \sum_{\yy\in\St}\sum_{\left\{r\in \text{Rect}^\circ(\xx,\yy) \;|\; Int(r)\cap\X = X_i\right\}}V_1^{O_1(r)}\cdots V_n^{O_n(r)}\yy    
 \end{equation}
 where $\xx\in\St$. We call such a homotopy an \emph{elementary homotopy}.
 In the case where $V_i$ and $V_j$ are not consecutive, there exists a sequence of consecutive variables $V_i=V_{n_1},\dots,V_{n_k}=V_j$ (by a connection argument that we are allowed to use because the grid $\G$ represents a knot). A chain homotopy $h_{ij}$ from $V_i$ to $V_j$ is given by the sum of elementary homotopies:
 \[ h_{ij} = \sum_{c=1}^{k-1} h_{n_cn_{c+1}}. \] 
 \begin{figure}
 	\centering
 	\includegraphics[width=0.3\linewidth]{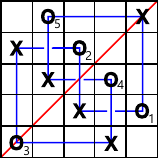}
 	\caption{Example of auxiliary enumeration.}
 	\label{img:aux_enum}
 \end{figure}
 \begin{remark}\label{rmk:2homot}
 	It follows immediately that, given two indices $i,j=1,\dots,n$, we have at least two possible chain homotopies from $V_i$ to $V_j$. Each one is given by one of the two possible ways to reach $O_j$ from $O_i$ through a path of consecutive variables that visits each variable at most once. 
 	
 	We now fix a notation to address such homotopies.
 	We start by fixing an auxiliary enumeration on the $O$-markings, based on the relation of consecutiveness. For each $i=1,\dots,n$, $O_i$ and $O_{n-1}$ are consecutive, and so are $O_n$ and $O_1$.
 	We only need to enumerate one of the $O$-markings; the enumerations of the others will follow.
 	Given a strongly invertible grid $\G$, consider the second marking on the axis, from NW to SE. If it is an $O$-marking, we set it as $O_n$. Otherwise, travel through the knot above the axis and set as $O_n$ the first $O$-marking met in this way (see Figure~\ref{img:aux_enum}).
 	For two distinct indices, wlog $i< j$, we will call:
 	\[ h_{ij} = \sum_{k=i}^{j-1} h_{k,k+1} \]
 	and $\widetilde{h}_{ij}$ the homotopy found adding all the complementary elementary homotopies.  
 	Since we are working with $\F_2$ coefficients, the homotopies are symmetric in the indices, therefore for any $i<j$ we can set:
 	\[ h_{ji} := h_{ij} \;\text{ and }\; \widetilde{h}_{ji} := \widetilde{h}_{ij}. \]
 	
 	It remains to show the same index case.
 	For $i=1,\dots,n-1$, we will call $h_{ii}$ the homotopy given by the zero map, and $\widetilde{h}_{ii}$ the homotopy given by the sum of all the $n$ elementary homotopies. The same holds for $h_{nn}$ and $\widetilde{h}_{nn}$ if $O_n$ is not on the NE to SW axis. When $O_n$ is on the axis, we make the opposite choice. Namely, $\widetilde{h}_{nn}$ will be the zero map, and $h_{nn}$ will be the sum of all the $n$ elementary homotopies.
 	This choice is needed to obtain chain homotopic multiplications on $\cono$, and will be clearer further on.
 \end{remark} 
 Now, through the maps $\{h_{ij},\widetilde{h}_{ij}\}$ and the square in Equation~\ref{eq:square}, we define two sets of multiplications on $\cono$.
 \begin{defn}
 	Let $\G$ be a symmetric grid and enumerate the $O$-markings according to the consecutiveness relation, as in Remark~\ref{rmk:2homot}. 
 	Let $i=1,\dots,n$ such that $O_i\neq O_{\sigma(i)}$. 
 	We call \emph{multiplication by} $\mathbf{V}_i$ and by $\widetilde{\mathbf{V}}_i$ respectively the maps
 	\[ \mathbf{V}_i,\widetilde{\mathbf{V}}_i:\cono \rightarrow\cono \]
 	defined as:
 	\[ \mathbf{V}_i = \begin{pmatrix}
 		V_i & 0 \\ \rho h_{i\sigma(i)} & V_i
 	\end{pmatrix} \;\text{ and }\;
 	\widetilde{\mathbf{V}}_i = \begin{pmatrix}
 		V_i & 0 \\ \rho \widetilde{h}_{i\sigma(i)} & V_i
 	\end{pmatrix}.\]
 \end{defn}
 \begin{remark}
 	Let $\G$ be a symmetric grid. For any index $i=1,\dots,n$, both multiplication by $\mathbf{V}_i$ and multiplication by $\widetilde{\mathbf{V}}_i$ are homogeneous maps of bidegree $(-2,-1)$.
 \end{remark}
 
 Either through the $\{\mathbf{V_i}\}_i$ of $\{\widetilde{\mathbf{V}}_i\}_i$ multiplications, we can endowe $\cono$ with a $\F[V_1,\dots,V_n]$-module structure. 
 
 The following analogue of \cite[Lemma 4.6.9]{ozsvath2015grid} holds. 
 
 \begin{lemma}\label{lem:multipl}
 	Let $\G$ be a symmetric grid. For any pair of integers $i,j \in \{1,\dots, n\}$, multiplication by $\mathbf{V}_i$ is chain homotopic to multiplication by $\mathbf{V}_j$, when thought of as homogeneous maps from $\cono$ to itself of degree $(-2,-1)$. The same holds for the $\widetilde{\mathbf{V}}_i$ multiplications.
 	\begin{proof}
 		With respect to the enumeration defined in Remark~\ref{rmk:2homot}, for every $i,j=1,\dots,n$ we set:
 		\[ H_{ij} = \begin{pmatrix} h_{ij} & 0 \\ 0 & h_{ij} \end{pmatrix}
 		:\text{Cone}_\G(Id+\rho) \rightarrow \text{Cone}_\G(Id+\rho). \]
 		Assume that $O_n$ does not lie on the NW to SE axis. Then
 		\[ \mathbf{V}_i + \mathbf{V}_j = \begin{pmatrix} V_{i}+V_j & 0 \\ \rho h_{i\rho(i)} + \rho h_{j\rho(j)} & V_{i}+V_j \end{pmatrix}, \]
 		and
 		\[ \partial H_{ij} + H_{ij}\partial = \begin{pmatrix} \partial h_{ij} + h_{ij}\partial & 0 \\ h_{ij}+\rho h_{ij} + h_{ij}+ h_{ij}\rho & \partial h_{ij} + h_{ij}\partial \end{pmatrix}. \]
 		A straightforward computation shows that $\rho\circ h_{ij}\circ\rho = h_{\rho(i)\rho(j)}$, hence:
 		\[ \partial H_{ij} + H_{ij}\partial = \begin{pmatrix} \partial h_{ij} + h_{ij}\partial & 0 \\ \rho h_{ij} + \rho h_{\rho(i)\rho(j)} & \partial h_{ij} + h_{ij}\partial \end{pmatrix}. \]
 		It only remains to show that:
 		\[ h_{ij} + h_{\rho(i)\rho(j)} + h_{i\rho(i)} + h_{j\rho(j)} = 0.\]
 		Independently from the order in which $O_i,$ $O_j,$ $O_{\rho(i)}$ and $O_{\rho(j)}$ appear in the auxiliary enumeration we set on the $O$-markings, by writing the four chain homotopies as the sum of elementary homotopies we find that each summand appears exactly two times, hence the sum is zero.
 		
 		If $O_n$ lies on the axis, then for any other index $i$ when we try to show that $\mathbf{V}_i + \mathbf{V_n}$ is equal to $\partial H_{in} + H_{in}\partial$ again the only non-trivial step is checking the bottom-left entry of the matrices. We must verify that:
 		\begin{equation}\label{eq:2sarkar}
 			\rho h_{i\rho(i)} + \rho h_{n\rho(n)} = \rho h_{in} + h_{in}\rho. 
 		\end{equation}
 		We have that:
 		\[ h_{n\rho(n)} = h_{nn} = h_{n1} + \sum_{k=1}^{n-1} h_{k,k+1} \,\text{ and }\, h_{in} \rho = \rho\left(h_{n1} + \sum_{k=1}^{\rho(i)-1}h_{k,k+1}\right). \]
 		So in Equation~\ref{eq:2sarkar} each of the $n$ elementary homotopies appears an even amount of times, hence everything simplifies. 
 		
 		For the $\widetilde{\mathbf{V}}_i$ multiplications, the cases and computations are analogous.
 	\end{proof}
 \end{lemma}
 
 We now know that the $\F[V_1,\dots,n]$-module structure we endowed $\cono$ with (either through $\mathbf{V}_i$ or through $\widetilde{\mathbf{V}}_i$ multiplications) induces a $\F[U]$-module structure on the homology $H(\cono)$.
 If instead of choosing one of the two multiplications sets, we consider $\cono$ as a $F[V_1,\dots,V_n,\widetilde{V}_1,\dots,\widetilde{V}_n]$-module, we can see that $H(\cono)$ inherits a $F[U,\widetilde{U}]$-module structure. As the following remark points out, we can be more precise.
 \begin{remark}\label{rmk:somma_sarkar_map}
 	The map we are referring to as the sum of all the $n$ elementary homotopies:
 	\[ \sum_{i=1}^{n-1} h_{i,i+1} + h_{n1}, \]
 	is a known map in literature. In our setting, it coincides with the map $\Psi=\Psi_1$ in \cite{sarkar2015moving}. It follows that, for any $i=1,\dots,n$:
 	\[ \mathbf{V}_i + \widetilde{\mathbf{V}}_i = \begin{pmatrix}
 		0 & 0 \\ \rho \Psi & 0
 	\end{pmatrix}. \]
 	We can hence apply \cite[Lemma 4.4]{sarkar2015moving} and use that $\Psi^2$ is chain homotopic to the identity to say that, in the homology $H(\cono)$, the multiplication by $(U + \widetilde{U})^2$ is the zero map.
 	In other words, we endowed $H(\cono)$ with a $\F[U,\widetilde{U}]/(U+\widetilde{U})^2$-module structure.
 	This structure coincides with the previous $\F[U]$-module and $\F[\widetilde{U}]$-module structures if the map $\Psi$ is chain homotopic to zero. 
 \end{remark}
 \begin{defn}
 	Let $\G$ be a symmetric grid. We call the (\emph{unblocked}) \emph{cone complex} of $\G$ the bigraded $\F[V_1,\dots,V_n]$-chain complex $(\cono,\partial)$ endowed with the $\mathbf{V}_i$'s multiplications. The (\emph{unblocked}) \emph{cone homology} of $\G$, denoted $\hc(\G)$, the homology of $(\cono, \partial)$ as a bigraded $\F[U]$-module, where the $U$ action is induced by the action of any $\mathbf{V}_i$. 
 \end{defn}
 The grid complex $\gc$ depends directly on the grid $\G$. In contrast, Theorem~\ref{thm:gh} establishes that its homology $\gh$ depends only on the underlying knot, making it a knot invariant. 
 Similarly, the cone complex $\cono$ is not an invariant of the knot represented by the grid $\G$. However, the cone homology is a strongly invertible knot invariant. The proof of this statement is presented in the following section.
 
 Let $\mathcal{R}$ be a ring and consider two bigraded chain complexes over $\mathcal{R}$: $C$ and $D$. It is a classical result (see for example \cite[Lemma A.3.2]{ozsvath2015grid}) that the mapping cone of a $\mathcal{R}$-linear chain map $\varphi:C \rightarrow D$ fits in a short exact sequence:
 \[ 0 \rightarrow C \xrightarrow{i} \text{Cone}(\varphi) \xrightarrow{p} D \rightarrow 0, \]
 where $i$ is bigradded and $p$ is $(-1,0)$-homogeneous.
 The same lemma also shows that the connecting morphism of the induced homology long exact sequence is $\varphi_* : H(C) \rightarrow H(D)$.
 Clearly, therefore the same results hold for $\cono$ at the vector space level. A simple observation reveals that we can still make the same statement if we consider the complexes as module complexes.
 
 \begin{prop}\label{prop:exact_seq}
 	Consider the $\F[V_1,\dots,V_n]$-module structure on the chain complex $\cono$ given either by the $\mathbf{V}_i$- or by the $\widetilde{\mathbf{V}}_i$-multiplications. 
 	The following short exact sequence of bigraded complexes over $\F[V_1,\dots,V_n]$ is exact:
 	\[ 0 \rightarrow \gc \xrightarrow{i} \cono \xrightarrow{p} \gc \rightarrow 0, \]
 	where $i(c)=(0,c)$ and $p(c',c)= c'$.
 	\begin{proof}
 		The classical result (\cite[Lemma A.3.2]{ozsvath2015grid}) allows us to say that the only property to check is that $i$ and $p$ are module maps.
 		This is a straightforward computation, for any index $k=1,\dots,n$:
 		\[ i (V_kc) = (0,V_kc) = \begin{pmatrix}
 			V_k & 0 \\ \rho h_{k\rho(k)} & V_k
 		\end{pmatrix} (0,c) = \mathbf{V}_k i(c), \]
 		and:
 		\[ V_k p (c',c) = V_kc' = p(V_kc', \rho h_{k\rho(k)}c' + V_{k}c) = p(\mathbf{V}_k(c',c)).   \]
 		The computations in the $\widetilde{\mathbf{V}}_k$ case are just as trivial. 
 	\end{proof}
 \end{prop}
 \begin{remark}\label{rmk:long}
 	In Proposition~\ref{prop:exact_seq}, the map $i$ is bigraded, while $p$ is $(-1,0)$-homogeneous. We obtain a long exact sequence in homology:
 	\[  \xrightarrow{p*} GH^-_{d,s}(K) \xrightarrow{(Id+\rho)_*} GH^-_{d,s}(K) \xrightarrow{i_*} \hc_{d,s}(K) \xrightarrow{p_*} GH^-_{d-1,s}(K) \xrightarrow{(Id+\rho)_*} . \]
 \end{remark}
 
 \subsection{Other flavors}\label{sec:flavours}
 In the classical case, alternative versions of the grid complex are defined, such as $\widetilde{GC}(\G)$ and $\widehat{GC}(\G)$. This section constructs the equivariant analogues of these complexes. The simplest is the analogue of $\widetilde{GC}(\G)$, which forgets the module structure.
 
 \subsubsection{The tilde cone complex}
 Consider the fully blocked grid complex $\widetilde{GC}(\G)$, that is, the complex generated over $\F$ by $\St$ together with the differential defined as:
 \[ \widetilde{\partial}(\xx) = \sum_{\yy\in\St} {\#\{r\in \text{Rect}^\circ(\xx,\yy) \;|\; r\cap \X = r\cap \Oo = \emptyset \} } \cdot \yy, \]
 for each $\xx\in\St$. 
 The fully blocked grid homology is $\widetilde{GH}(\G) = H(\widetilde{GC}(\G),\widetilde{\partial})$. 
 When $\G$ is symmetric, we can define a linear involution $\rho : \widetilde{GC}(\G) \rightarrow \widetilde{GC}(\G)$, that sends every generator $\xx \in \St$ to its reflection along the NE to SW axis,
 \begin{defn}
 	Let $\G$ be a symmetric grid. The \emph{fully blocked cone complex} is:
 	\[ \widetilde{\text{Cone}}_\G(Id + \rho) = \text{Cone}(Id + \rho:\widetilde{GC}(\G) \rightarrow \widetilde{GC}(\G)). \]
 	The \emph{fully blocked cone homology}, denoted $\widetilde{\hc}(\G)$, is the $\F$-vector space obtained as the homology of $\widetilde{\text{Cone}}_\G(Id + \rho)$.
 \end{defn}
 Note how, also in the fully blocked case, we obtain an exact short sequence:
 \[ 0 \rightarrow \widetilde{GC}(\G) \rightarrow \widetilde{Cone}_\G(Id +\rho ) \rightarrow \widetilde{GC}(\G) \rightarrow 0. \]
 
 \begin{figure}
 	\centering
 	\includegraphics[width=0.35\linewidth]{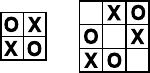}
 	\caption{Two symmetric grids representing the unknot: $\G$ and $\G'$.}
 	\label{img:gg'}
 \end{figure} 
 \begin{figure}
 	\centering
 	\begin{tikzpicture}
 		\draw (0,0) node[above right]{
 			\includegraphics[width=0.65\linewidth]{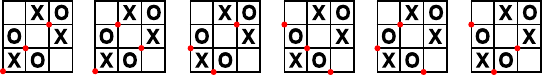}};
 		\draw (0.8,-0.10) node[] (a) {$\xx_1$};
 		\draw (2.22,-0.10) node[] (a) {$\xx_2$};
 		\draw (3.75,-0.10) node[] (a) {$\xx_3$};
 		\draw (5.16,-0.10) node[] (a) {$\xx_4$};
 		\draw (6.6,-0.10) node[] (a) {$\xx_5$};
 		\draw (8.07,-0.10) node[] (a) {$\xx_6$};
 	\end{tikzpicture}
 	\caption{States of the grid $\G'$.}
 	\label{img:x1_x6}
 \end{figure}
 
 \begin{remark}
 	As for the fully blocked grid homology, the fully blocked cone homology $\widetilde{\hc}(\G)$ does depend on the grid $\G$, so it is not a \si knot invariant. For example, consider the two symmetric grids representing the unknot in Figure~\ref{img:gg'}, respectively $\G$ and $\G'$. For the first grid, $\St$ consists of two symmetric states, hence fixed by $\rho$. It follows that $Id + \rho$ is the zero map, and $\widetilde{\text{Cone}}_\G(Id+\rho) \cong \widetilde{GC}(\G) \bigoplus \widetilde{GC}(\G) \cong \F^4$. For $\G'$, we need to be more careful. Referring to Figure~\ref{img:x1_x6}, there are six states. States $\xx_1,\dots,\xx_4$ are symmetric, and $\xx_5$ and $\xx_6$ are exchanged by $\rho$. Via a brief computation, we can see that the kernel of the differential is generated on the cone by:
 	\begin{align*} <&(\xx_2,0),(0,\xx_2),(\xx_3,0),(0,\xx_3),(\xx_4,0),(0,\xx_4), \\ &(\xx_5+\xx_6,0),(0,\xx_5+\xx_6),(0,\xx_5+\xx_1)>, 
 		\end{align*}
 	while a set of generators of the image of the differential is given by \[<(\xx_2,0),(0,\xx_2),(0,\xx_5+\xx_6)>.\] So the dimension of $\widetilde{\hc}(\G')$ is $6$.
 \end{remark}
 It is straightforward (\cite[Section 4.6]{ozsvath2015grid}) that there exist an isomorphism of $\F$-vector spaces:
 \[ \frac{\gc}{V_1=\dots=V_n=0} \cong \widetilde{GC}(\G).  \]
 It is interesting to observe that the same relation holds for the cone complex.
 
 \begin{prop}\label{prop:quozCon_Conquoz_tilde}
 	Let $\G$ be a symmetric grid. Then there exists an isomorphism of $\F$-vector spaces:
 	\[ \frac{\cono}{\mathbf{V}_1=\dots=\mathbf{V}_n=0} \cong \widetilde{Cone}_\G(Id+\rho). \]
 	\begin{proof}
 		By Proposition~\ref{prop:exact_seq}, there exists a short exact sequence (ignoring the shift):
 		\[ 0 \rightarrow \gc \xrightarrow{i} \cono \rightarrow \gc \rightarrow 0. \]
 		Consider the ideal $\text{I}=(\mathbf{V}_1,\dots,\mathbf{V}_n)$ of the base ring, generated by all the variable multiplications. The quotient by $\text{I}$ gives an exact sequence:
 		\[ \widetilde{GC}(\G) \xrightarrow{\iota} \cono/\text{I} \rightarrow\widetilde{GC}(\G) \rightarrow 0. \]
 		We aim to demonstrate that $\iota$ is indeed injective, thereby establishing a short exact sequence.
 		Since $i$ is injective, it suffices to show that $ \text{Im}(i) \cap \text{I}\cdot \cono = i(\text{I}\cdot\gc)$. The $\supseteq$ inclusion is obvious because $i$ is a module map; hence, we shall focus on the $\subseteq$ inclusion. 
 		An element $\text{x}\in \text{Im}(i) \cap \text{I}\cdot \cono$ is necessarily of the form:
 		\[ \text{x} = \left( 0 , \sum_{i=1}^n V_if_i\right), \quad f_i\in\gc \;\forall\,i. \]
 		It follows immediately that $\iota$ is injective. 
 		
 		Consider the following commutative diagram with exact rows:
 		\[
 		\begin{tikzcd}
 			0 \arrow{r}{} & \widetilde{GC}(\G) \arrow{r}{Id+\rho} \arrow[swap]{d}{Id} & 
 			\widetilde{\text{Cone}}_\G(Id+\rho) \arrow{r}{} \arrow{d}{\phi} & \widetilde{GC}(\G) \arrow{r}{} \arrow{d}{Id} & 0 \\ 
 			0 \arrow{r}{} & \widetilde{GC}(\G) \arrow{r}{Id+\rho} & 
 			\cono/\text{I} \arrow{r}{} & \widetilde{GC}(\G) \arrow{r}{} & 0,
 		\end{tikzcd}\]
 		where, $\phi(\text{x},\text{y}) = [\text{x},\text{y}]$ (we are using the fact that we can think $\widetilde{GC}(\G)$ as a subset of $\gc$). We conclude with the Five Lemma.
 	\end{proof}
 \end{prop}
 
 \subsubsection{The hat cone complex}
 The next flavor of the cone is constructed directly from $\cono$.
 The simply blocked grid complex is defined (see \cite{manolescuoz}) as the quotient of the unblocked grid complex by the multiplication by one variable $V_i$. The homology of the resulting complex does not depend, up to isomorphism, on the index $i$ (see \cite[Corollary 4.6.17]{ozsvath2015grid}). We proceed analogously. 
 \begin{defn}
 	Let $\G$ be a symmetric grid of size $n$ and fix any index $i=1,\dots,n$. The \emph{simply blocked cone complex} is the quotient complex \[\cono/\mathbf{V}_i,\] denoted $\hcono$. The \emph{simply blocked cone homology}, denoted $\widehat{\hc}(\G)$, is the $\F$-vector space obtained as the homology of $\hcono=(\cono/\mathbf{V}_i,\partial)$.
 \end{defn}
 Note that, thanks to Lemma~\ref{lem:multipl}, we are really defining only one object, as the choice of the index $i$ does not really make a difference. By writing this, we mean the following.
 \begin{prop}
 	Let $\G$ be a symmetric grid.
 	The simply blocked cone homology:
 	\[ \widehat{\hc}(\G) = H\left( \cono/\mathbf{V}_i \right) \]
 	does not depend on the choice of the index $i=1,\dots,n$.
 	\begin{proof}
 		The map $\textbf{V}_i:\cono \rightarrow \cono$ is injective. It follows (\cite[Lemma A.3.9]{ozsvath2015grid}) that the quotient $\cono/\textbf{V}_i$ is quasi-isomorphic to:
 		\[ \text{Cone}(\textbf{V}_i:\cono \rightarrow \cono). \]
 		Since by Proposition~\ref{lem:multipl} the $\textbf{V}_i$-multiplications are chain homotopic to each other, the isomorphism class of the latter complex does not depend on $i$ (\cite[Lemma A.3.7]{ozsvath2015grid}). This concludes the proof.
 	\end{proof}
 \end{prop}
  
 Observe that, for each $i=1,\dots,n$, there is an induced involution:
 \[ \rho: \frac{\gc}{V_i} \rightarrow \frac{\gc}{V_{\rho(i)}}. \]
 To define a map $Id+\rho$  between quotients of $\gc$, we need an index $i=1,\dots,n$ such that $\rho(i)=i$. 
 Up to stabilizations on the axis, one can always assume to have such a variable on a symmetric grid $\G$, paying attention to the fact that such a move changes the Legendrian type of the knot represented by $\G$.
 Call $\text{Cone}_\G(\overline{Id + \rho})$ the mapping cone of:
 \[ Id + \rho : \frac{\gc}{V_i} \rightarrow \frac{\gc}{V_i}. \]
 Given Proposition~\ref{prop:quozCon_Conquoz_tilde}, one could wonder if an analogue property holds also for the simply blocked complex. The non-diagonal structure of the multiplications on the cone complex implies that the answer would be negative.
 
 We consider $\hcono$ to be a more natural object in light of the following analogous of \cite[Proposition 4.6.18]{ozsvath2015grid}.
 \begin{prop}\label{prop:seq_esatta_meno-hat}
 	There is a long exact sequence relating $\hcono$ and $\cono$:
 	\[ \cdot\cdot \rightarrow \hc_{d+2,s+1}(\G) \xrightarrow{U} \hc_{d,s}(\G) \rightarrow \widehat{\hc}_{d,s}(\G) \rightarrow \hc_{d+1,s+1}(\G) \rightarrow \cdot\cdot . \] 
 	\begin{proof}
 		The wanted sequence is the long exact sequence associated with the short exact sequence:
 		\[  0 \rightarrow \cono \xrightarrow{\mathbf{V}_i} \cono \rightarrow \hcono \rightarrow 0.\]
 	\end{proof}
 \end{prop}
 Section~\ref{sec:prova_inv} is devoted to proving that $\hc(\G)$ only depends on the topological (unoriented) type of the underlying knot, and not on $\G$. Once one knows this, Proposition~\ref{prop:seq_esatta_meno-hat} shows that also $\widehat{\hc}(\G)$ is an invariant of \si knots.

 \subsection{Invariance of the cone homology}\label{sec:prova_inv}
 This section is devoted to the proof of the invariance of the $\hc(\G)$ module that we defined in the previous section.
 To establish this result, two preliminary lemmas are presented.
 Notice that in the following lemma we use the fact that the coefficients lie in the field with two elements.
 
 \begin{lemma}\label{lemma:commut_F_h}
 	Let $\G,\G'$ be two grid diagrams of size $n$. Let $f:\gc \rightarrow GC^-(\G')$ be the quasi-isomorphism induced by a commutation move on $\G$. Fix two indices $i,j=1,\dots,n$ such that the homotopy $h_{ij}$ between the multiplications $V_i$ and $V_j$ is an elementary homotopy.
 	Then $f\circ h_{ij}$ and $h_{ij} \circ f$ are chain homotopic.
 	\begin{proof}
 		We briefly recall the conventions in \cite[Section 5.1]{ozsvath2015grid} for the commutation induced map.
 		Assume that the commutation is a column commutation.
 		We view both $\G$ and $\G'$ simultaneously on the same torus.
 		Regard the $O$- and $X$-markings as fixed for both grids and draw two vertical circles (one for $\G$ and one for $\G'$) curved, as in Figure\ref{fig:simultanea}. The horizontal circles will be $\aalpha=\{ \alpha_1,\dots, \alpha_n \}$ for both $\G$ and $\G'$.
 		The set of vertical circles will be $\bbeta = \{\beta_1 ,\dots, \beta_n \}$ for $\G$ and $\ggamma= \{ \beta_1 ,\dots, \beta_{i-1}, \gamma_i ,\beta_{i+1} ,\dots, \beta_n \}$ for $\G'$. 
 		Draw $\beta_i$ and $\gamma_i$ so that they intersect transversally in two points $a$ and $b$.
 		Name the points so that of the two bigons delimited by $\beta_i$ and $\gamma_i$, the one with $\beta_i$ as western boundary has $a$ as southern tip. 
 		The map $f$ is defined in terms of specific domains called pentagons. 
 		Fix $\xx \in \St$ and $\yy \in \Stt$. A \emph{pentagon from $\xx$ to $\yy$} (\cite[Definition 5.1.1]{ozsvath2015grid}) is an embedded disk $p$ in the torus whose boundary is the union of five arcs, each of which lies on some $\alpha_j$, $\beta_j$, or on $\gamma_i$, that satisfies the following conditions:
 		\begin{itemize}
 	         \item Four of the corners of $p$ are in $\xx \cup \yy$.
 	         \item Note that each corner lies in the intersection between two curves from $\aalpha \cup \bbeta \cup \gamma_i$. A small circle centered in any corner is hence divided into four quadrants by those two curves. The pentagon $p$ contains exactly one of the four quadrants.  
 	         \item Calling $\partial_{\alpha}p = p \cap \aalpha$: $$ \partial( \partial_\alpha p ) = \yy- \xx. $$		
 		\end{itemize}
 		We call $\text{Pent}(\xx,\yy)$ the set of pentagons from $\xx$ to $\yy$. The conditions imply that the fifth corner is $a$. As usual, a pentagon $p \in \text{Pent}(\xx,\yy)$ such that
 		\[ p \cap (\xx \cup \yy) = \emptyset \]
 		is called empty, and we denote by $\text{Pent}^\circ(\xx,\yy)$ the set of such pentagons (see Figure~\ref{fig:pentagono}).
 		 
 		  \begin{figure}
 		 	\begin{subfigure}{\linewidth}
 		 		\centering
 		 		\begin{tikzpicture}
 		 			\draw (0,0) node[above right]{\includegraphics[width=0.33\linewidth]{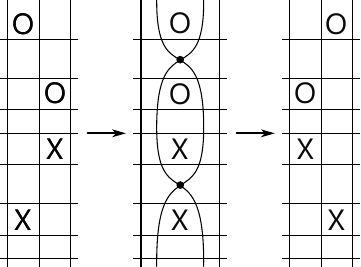}};
 		 			\draw (0.6,3.55) node[scale=1.3] (a) {$\G$};
 		 			\draw (4.08,3.55) node[scale=1.3] (a) {$\G'$};
 		 			\draw (2.5,1.1) node[scale=0.75] (a) {$a$};
 		 			\draw (2.53,2.57) node[scale=0.75] (a) {$b$};
 		 			\draw (2.48,3.45) node[scale=0.7] (a) {$\beta_i$};
 		 			\draw (2.07,3.45) node[scale=0.7] (a) {$\gamma_i$};
 		 		\end{tikzpicture}
 		 		\caption{The diagram in the center encodes simultaneously the commutation between $\G$ and $\G'$.}
 		 		\label{fig:simultanea}
 		 	\end{subfigure}
 		 	\begin{subfigure}{\linewidth}
 		 		\centering
 		 	    \begin{tikzpicture}
 		 			\draw (0,0) node[above right]{\includegraphics[width=0.33\linewidth]{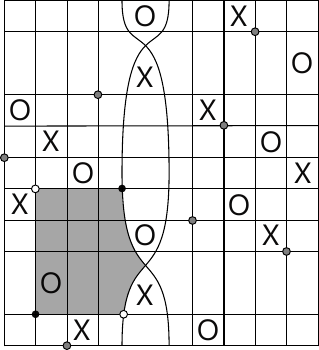}};
 		 			\draw (2.3,1.25) node[scale=0.75] (a) {$a$};
 		 			\draw (2.3,4.18) node[scale=0.75] (a) {$b$};
 		 			\draw (2.42,5) node[scale=0.7] (a) {$\beta_5$};
 		 			\draw (1.85,5) node[scale=0.7] (a) {$\gamma_5$};
 		 		\end{tikzpicture}
 		 		\caption{The dots represent points of the grid states $\xx$ and $\yy$. The shaded ones are shared, the black one belong to $\xx$ and the empty one belong to $\yy$. The shaded region is a pentagon $p \in \text{Pent}^\circ(\xx,\yy)$.}
 		 		\label{fig:pentagono}
 		 	\end{subfigure}
 		 	\caption{}
 		 	\label{fig:simultanei}
 		 \end{figure}
 		    
  		For any $\xx \in \St$:
 		\[ f(\xx) = \sum_{\yy\in\St}\sum_{\{p\in\text{Pent}^\circ(\xx,\yy)\;|\; p\cap\X = \emptyset \}} V_1^{O_i(p)}\dots V_{n}^{O_n(p)}\yy, \] and \[ h(\xx) = \sum_{\yy\in\St}\sum_{\{r\in\text{Rect}^\circ(\xx,\yy)\;|\; r\cap\X = X_i \}} V_1^{O_i(r)}\dots V_{n}^{O_n(r)}\yy. \]
 		So $h_{ij}$ counts empty rectangles that intersect the $X$-marking exactly in $X_i$, while $f$ counts empty pentagons with a corner at $a$, which do not intersect the $X$-markings (see Figure~\ref{fig:pentagono}). The boundary maps $\partial:\gc \rightarrow \gc$ and $\partial':GC^-(\G') \rightarrow GC^-(\G)$ both count empty rectangles that do not intersect the $X$-markings, and as the homotopy we will consider the map $H:\gc \rightarrow GC^-(\G')$ that counts empty pentagons that intersect the $X$-markings exactly  in $X_i$. More precisely, $H$ is defined on each generator $\xx \in \St$ as:
 		\[ H(\xx) = \sum_{\yy\in\St}\sum_{\{p\in\text{Pent}^\circ(\xx,\yy)\;|\; p\cap\X = X_i \}} V_1^{O_i(p)}\dots V_{n}^{O_n(p)}\yy. \]
 		Our claim is that, as maps of $\F[V_1,\dots,V_n]$-modules:
 		\[ \Phi := h_{ij} \circ f + f\circ h_{ij} + H\circ \partial + \partial'\circ H = 0. \]
 		The proof of this claim follows the same steps as the proof of \cite[Lemma 5.1.4]{ozsvath2015grid}, which shows that the commutation map $f$ is a chain map. We will now retrace those steps, adapting them to our claim.
 		
 		For a domain $\psi$, call $N(\psi)$ the number of decompositions it admits as:
 		\begin{itemize}
 			\item a rectangle containing $X_i$ (for $\G$) and an empty pentagon; or
 			\item an empty pentagon and a rectangle containing $X_i$ (for $\G'$); or
 			\item an empty rectangle (for $\G$) and a pentagon containing $X_i$; or
 			\item a pentagon containing $X_i$ and an empty rectangle (for $\G'$).
 		\end{itemize}
 		Then:
 		\[  \Phi(\xx) = \sum_{\zz\in\mathbf{S}(\G')} \sum_{\{\psi\in\pi(\xx,\zz) \;|\; \X\cap\psi = X_i\}} N(\psi) V_1^{O_1(\psi)}\dots V_n^{O_n(\psi)} \zz. \]
 		When $N(\psi) > 0$, the domain $\psi$ falls into one of the following cases:
 		\begin{enumerate}
 			\item $|\xx \setminus (\xx\cap \zz)|=4$. We have two cases, in both of them $N(\psi)=2$. In both cases, $\psi$ decomposes as $r*p$ and $p'*r'$, where $r$ and $r'$ (resp. $p$ and $p'$) share the underlying rectangle (resp. pentagon), that is, they differ only in the grid state they connect. The only difference in the two cases is that in the first case $r$ is empty and $p$ contains $X_i$, while in the second case $r$ contains $X_i$ and $p$ is empty.
 			\item $|\xx \setminus (\xx\cap \zz)|=3$. Assume the local multiplicity of $\psi$ is at most $1$ on the grid. Then $\psi$ has seven corners, where one is a $270^\circ$ corner. We decompose $\psi$ in two ways by cutting at the $270^\circ$ corner along two different directions. In both cases, we obtain a pentagon and a rectangle, but the precise order of the two and which one of the two contains $X_i$ depends on the geometry of $\psi$. If some local multiplicity is $2$, $\psi$ has a $270^\circ$ corner at $a$ and cutting there along the two directions gives two decompositions. 
 			\item $|\xx \setminus (\xx\cap \zz)|=1$. If so, $\psi$ wraps around the torus, forming a thin annulus. Here, by \emph{thin} we mean that the annulus' width is one, otherwise it would necessarily cross an $X$-marking different from $X_i$. There are two cases, in the first $\psi$ is a horizontal annulus minus a small triangle and in the second $\psi$ is the union of a small triangle and a vertical annulus.      
 			In the first case, $N(\psi)=2$. We find the two decompositions by cutting either along $\beta_i$ or along $\gamma_i$.
 			In the second case, the decomposition is unique. For any such vertical annulus there exists exactly one other vertical annulus that gives the same contribution to $\Phi(\xx)$. For more details, see \cite[Figure 5.6]{ozsvath2015grid}, as this case is the analogous of case \textbf{(P-3)(v)} in \cite[Lemma 5.1.4]{ozsvath2015grid}.
 		\end{enumerate}
 		Since we work on $\F_2$ coefficients, this concludes the proof.
 	\end{proof} 
 \end{lemma}
 \begin{lemma}\label{lemma:stab_F_h}
 	Let $\G,\G'$ be two grid diagrams. Let $f:\gc \rightarrow GC^-(\G')$ be the quasi-isomorphism induced by a (de)stabilization move on $\G$. Let $n$ be the size of $\G$, and fix two indices $i,j=1,\dots,n$ such that the homotopy $h_{ij}$ between the multiplications $V_i$ and $V_j$ is an elementary homotopy.
 	Depending on its type, the (de)stabilization introduces or removes one or two $X$-markings: $X_k$ and $X_h$ (eventually $h=k$).
 	Assume that $$\{h,k\} \cap \{i,j\} = \emptyset.$$
 	Then $f\circ h_{ij}$ and $h_{ij} \circ f$ are chain homotopic.
 	\begin{proof}
 		We claim that, unlike the previous lemma, the two commutations commute. We work in the case when $f:\gc \rightarrow GC^-(G')$ is an X:SW destabilization as in Figure~\ref{img:destab_xsw}. The computations in the remaining cases are analogous. Observe that, by hypothesis, $\{i,j\}$ is disjoint from $\{k-1,k\}$. Up to changing the enumeration on the markings, we assume $k=2$.
 		 \begin{figure}
 			\centering
 			\begin{tikzpicture}
 				\draw (0,0) node[above right]{
 					\includegraphics[width=0.48\linewidth]{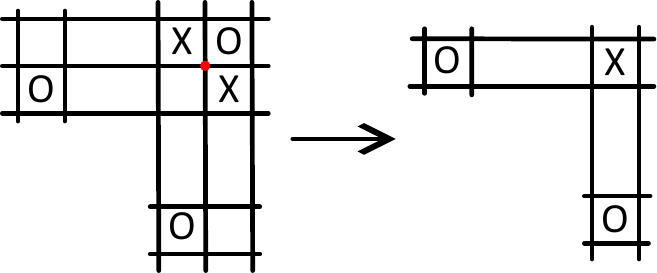}};
 				\draw (1.95,1.95) node[color=red,scale=1] (a) {$c$};
 			\end{tikzpicture}
 			\caption{X:SW-destabilization at $c$.}
 			\label{img:destab_xsw}
 		\end{figure}
 		
 		Split $\St = \text{I}(\G) \sqcup \text{N}(\G)$ into the states $\text{I}(\G)$ that contains the intersection point $c$ (see Figure~\ref{img:destab_xsw}) and the complementary $\text{N}(\G)$. Consequentially, consider the $\F[V_1,\dots,V_n]$-module splitting $\gc = \text{I} \oplus \text{N}$. Any rectangle from a state $\xx\in \text{N}$ to a state $\yy\in \text{I}$ must contain an $X$-marking, hence the submodule $\text{N}$ is a subcomplex and we can write the differential as:
 		\[ \partial^-_{\X} = \begin{pmatrix}
 			\partial^\text{I}_\text{I} & 0 \\ \partial^\text{N}_\text{I} & \partial^\text{N}_\text{N}
 		\end{pmatrix}. \]
 		It follows that $\gc$ is the mapping cone of the map:
 		$ \partial^\text{N}_\text{I}$ from $(\text{I},\partial^\text{I}_\text{I})$ to $(\text{N},\partial^\text{N}_\text{N})$.
 		
 		By \cite[Section 5.2]{ozsvath2015grid}, we know that $f$ is the composition of the following two quasi-isomorphisms:
 		\begin{align*}
 			\text{Cone}(\partial^\text{N}_\text{I}:\text{I}\rightarrow \text{N}) & \rightarrow \text{Cone}(V_2-V_1:GC^-(\G')[V_1]\rightarrow GC^-(\G')[V_1]) \\
 			(\xx,\yy) & \mapsto (e(\xx),e\circ\mathcal{H}_{X_2}(\yy))
 		\end{align*}
 		where $e:\textbf{I}\rightarrow GC^-(\G')$ is the natural identification of the states, and
 		\begin{align*}
 			\text{Cone}(V_2-V_1) &\rightarrow GC^-(\G')[V_1]/(V_1-V_2) \cong GC^-(\G') \\
 			(a,b)&\mapsto \overline{a}.
 		\end{align*} 
 		Consider the homotopy $h_{ij}:\gc \rightarrow  \gc$ from the multiplication by $V_i$ to the multiplication by $V_j$. We can write:
 		\[h_{ij}=\begin{pmatrix}
 			h_{ij}|_\text{I} & \mathcal{H}_{X_i}^\text{N} \\ \mathcal{H}_{X_i}^\text{I} & h_{ij}|_\text{N}
 		\end{pmatrix}: \text{Cone}(\partial^\text{N}_\text{I}) \rightarrow \text{Cone}(\partial^\text{N}_\text{I}).\]
 		Given a map $\phi$ taking values in $GC^-(\G')[V_1]$, call $\overline{\phi}$ its composition with the projection $GC^-(\G')[V_1] \rightarrow GC^-(\G')[V_1]/(V_1-V_2)= GC^-(\G')$. The claim coincides with the commutativity of the following diagram:
 		\[  \begin{tikzcd}
 			\text{Cone}(\partial_\X^-) \arrow{r}{h_{ij}} \arrow[swap]{d}{\overline{(e,0)}} & 
 			\text{Cone}(\partial_\X^-) \arrow{d}{\overline{(e,0)}} \\ GC^-(\G') \arrow{r}{h'_{ij}} & 
 			GC^-(\G'),
 		\end{tikzcd} \]
 		where $h'_{i,j}$ is the homotopy between multiplications $V_i$ and $V_j$ in the complex $\gcc$.
 		What we really want to do is to show that $h'_{ij}\circ e$ is chain homotopic, in the quotient, to $e\circ h_{ij|_\text{I}} + e\circ \mathcal{H}_{X_i}^\text{N}$.
 		
 		Consider the map $\mathcal{H}_{X_i}^\text{N}:\textbf{N}\rightarrow \textbf{I}$. For each $\xx\in\textbf{N}(\G)$ and each $\yy\in \mathbf{I}(\G)$, $\mathcal{H}_{X_i}^\text{N}$ counts empty rectangles from $\xx$ to $\yy$ that intersect the $X$-markings exactly in $X_i$. Note from Figure~\ref{img:destab_xsw} that, since the point $c$ belongs to $\yy$, any empty rectangle from $\xx$ to $\yy$ must intersect the $X$-markings either in $X_2$ or in $X_1$. It follows that $\mathcal{H}_{X_i}^\text{N}$ must be the zero map.
 		
 		We will now show that
 		$e\circ h_{ij}|_\text{I} = h'_{ij} \circ e.$
 		For any $\xx \in \mathbf{I}(\G)$, then:
 		\[ h_{ij}|_\text{I}(\xx) = \sum_{\yy\in\mathbf{I}(\G)} \sum_{\{ r\in \text{Rect}^\circ(\xx,\yy) \;|\; r\cap \X=X_i \}} V_1^{O_1(r)} \cdots V_n^{O_n(r)}\yy, \]
 		and:
 		\[ h'_{ij}(e(\xx)) = \sum_{e(\yy)\in\mathbf{S}(\G')} \sum_{\{ r'\in \text{Rect}^\circ(e(\xx),e(\yy)) \;|\; r\cap \X=X_i \}} V_2^{O_2(r)} \cdots V_n^{O_n(r)}e(\yy). \]
 		For each $\xx,\yy\in\mathbf{I}(\G)$, the bijection between $\mathbf{I}(\G)$ and $\mathbf{S}(\G')$ induces a bijection between $\text{Rect}^\circ(\xx,\yy)$ on $\G$ and $\text{Rect}^\circ(e(\xx),e(\yy))$ on $\G'$. We will also refer to this bijection as $e$. The claim follows showing that for any $i=1,\dots,n$ and for any $r\in \text{Rect}^\circ(\xx,\yy)$, we have that:
 		\[ O_i(r) = O_i(e(r)), \]
 		here we are implicitly setting $O_1(e(r))=0$ for every rectangle, since the grid $\G'$ lacks the index $1$ $X$-marking.
 		
 		For $i=2,\dots,n$, it is clear that the bijection $e$ preserves the multiplicity of $O_i$ in the rectangles. 
 		Hence, we end by observing that, for every $r\in \text{Rect}^\circ(\xx,\yy)$ for $\xx,\yy \in \mathbf{I}(\G)$, we have $O_1(r) = 0$. Suppose by contradiction that $r\in \text{Rect}^\circ(\xx,\yy)$ is such that $O_1(r) = 0$. Since one of the corners of the square containing $O_1$ is exactly the point $c$, it follows that $c$ must be one of the four corners of $r$ (see Figure~). This is absurd, in fact, by definition of $\mathbf{I}(\G)$, the point $c$ belongs to both $\xx$ and $\yy$, so $c$ cannot belong to $\partial r$.      
 	\end{proof} 
 \end{lemma} 
 
 The previous lemmas hold in the general, non-equivariant case. They are two technical results that we will apply to prove the invariance of cone homology.
 
 \begin{lemma}\label{lemma:F_h}
 	Let $\G,\G'$ be two symmetric grids. Let $F:\gc \rightarrow GC^-(\G')$ be an equivariant grid move and let $c_1,\dots,c_d$ be a sequence of non-equivariant grid moves (hence, row/column-commutations and (de)stabilizations) such that: $F= c_d \circ \dots \circ c_1$. Fix a marking point $O_k$ and consider the homotopy $h_{k\rho(k)}$ between the multiplication by $V_k$ and the multiplication by $V_{\rho(k)}$ on $\gc$. Write $h_{k\rho(k)}$ as sum of elementary homotopies: $h_{k\rho(k)} = \sum_{i=1}^m h_{i,i+1}$.
 	
 	Assume that, for every possible choice of indices $i=1,\dots,d$ and $j=1,\dots,m$, the pair $(c_i,h_{j,j+1})$, of a grid move and an elementary homotopy, either satisfies the hypothesis of Lemma~\ref{lemma:commut_F_h} or of Lemma~\ref{lemma:stab_F_h}. Then the homotopy $h'_{k\rho(k)}$, between the multiplications by $V_k$ and by $V_{\rho(k)}$ on $GC^-(\G')$, is well-defined, and the compositions $F\circ h_{k\rho(k)}$ and $h'_{k\rho(k)} \circ F$ are chain homotopic.
 	\begin{proof}
 		The proof follows directly from the preceding lemmas.
 	\end{proof}
 \end{lemma}
 The main result can now be stated and proved.
 \begin{thm}\label{thm:invariance}
 	Let $K$ be a \si knot and $\G$ be a grid diagram such that $L(\G) = K$. The homology of $\text{Cone}_{\G}(Id+\rho)$ only depends on the \si type of $K$.
 	\begin{proof}
 		It is enough to exhibit an isomorphism $\hc(\G) \rightarrow\hc(\G')$ when $\G$ and $\G'$ are two symmetric grids linked by an equivariant grid move, which induces a quasi-isomorphism $F:\gc \rightarrow GC^-(\G')$.
 		Consider the following square:
 		\[  \begin{tikzcd}
 			\gc \arrow{r}{Id+\rho} \arrow[swap]{d}{F} & 
 			\gc \arrow{d}{F} \\ GC^-(\G') \arrow{r}{Id+\rho'} & 
 			GC^-(\G'),
 		\end{tikzcd}. \]
 		By Theorem~\ref{prop:key}, we know that the compositions $\rho'\circ F$ and $F \circ \rho$ are chain homotopic, via a homotopy that we call $\phi$. Hence, there is an induced vector space quasi-isomorphism $\text{Cone}F : \cono \rightarrow \cono$, defined as:
 		\[ \text{Cone}F = \begin{pmatrix}
 			F & 0 \\ \phi_F & F
 		\end{pmatrix}. \]
 		Here we posed $\phi_F= \rho \circ \phi$.
 		To conclude, we must show that $\text{Cone}F_*: \hc(\G) \rightarrow \hc(\G')$ is a module map. To be more precise, we are going to show that it is a $\F[U,\overline{U}]$-module map.
 		
 		Our aim is now to show that, there exist indices $i,j\in\{1,\dots,n\}$ such that $\mathbf{V}_j\circ \text{Cone}F$ is chain homotopic to $\text{Cone} F \circ \mathbf{V}_i$ and indices $i',j'\in\{1,\dots,n\}$ such that $\overline{\mathbf{V}}_{j'} \circ \text{Cone}F$ is chain homotopic to $\text{Cone} F \circ \overline{\mathbf{V}}_{i'}$. By Lemma~\ref{lem:multipl}, this would conclude. In particular, we will find an index $i=1,\dots,n$ such that:
 		\[ \mathbf{V}_i\text{Cone} F  \sim \text{Cone} F\, \mathbf{V}_i \quad\text{ and }\quad \overline{\mathbf{V}}_{i}  \text{Cone}F \sim  \text{Cone}F \; \overline{\mathbf{V}}_{i}. \]
 		
 		Note that \cite[Lemma 4.3]{sarkar2015moving} states that $\Psi F \sim F \Psi$, hence for each $i=1,\dots,n$:
 		\[ (\mathbf{V}_i + \widetilde{\mathbf{V}}_i) \text{Cone}F \sim \text{Cone}F (\mathbf{V}_i + \widetilde{\mathbf{V}}_i). \]
 		It follows that it is enough to show the thesis either for $\mathbf{V}_i$ or for $\widetilde{\mathbf{V}}_i$.
 		
 		We start by writing $F$ as a composition of non-equivariant grid moves: $F= c_n\circ\dots\circ c_1$. 
 		
 		The \si grid move $F$ can be an equivariant row or column commutation, an equivariant (de)stabilization, or a move from Figure~\ref{img:silgm}. In each case, one can easily see that only up to 3 of the $c_i$'s are stabilizations, so if the grid is big enough, there exist an index $\ii$ such that either the pair $(F,h_{\ii\rho(\ii)})$ or the pair $(F,\widetilde{h}_{\ii\rho(\ii)})$ satisfies the hypothesis of Lemma~\ref{lemma:F_h}. The thesis for small grids is trivial and can be directly shown. 
 		
 		Assume that $Fh_{\ii\rho(\ii)} \sim Fh_{\ii\rho(\ii)}$, the other case is perfectly symmetric.
 		Observe that $\mathbf{V}_\ii\circ \text{Cone}F + \text{Cone} F \circ \mathbf{V}_\ii$ is equal to:
 		\[  \begin{pmatrix}
 			V_\ii F & 0 \\ \rho h_{\ii\rho(\ii)}F + V_\ii\phi_F & V_\ii F
 		\end{pmatrix} + \begin{pmatrix}
 			FV_\ii  & 0 \\ F\rho h_{\ii\rho(\ii)} + \phi_FV_\ii & FV_\ii
 		\end{pmatrix}. \]
 		Since $F$ is an $\F[V_1,\cdots,V_n]$-module map, the only non-zero component of the matrix is the bottom-left one. It reads:
 		\begin{equation}\label{eq:asd}
 			V_\ii\phi_F + \phi_FV_\ii + F\rho h_{\ii\rho(\ii)} + \rho h_{\ii\rho(\ii)}F.
 		\end{equation}  
 		Now we will compute the terms of the above sum to show that it is null-homotopic, thereby establishing the thesis. Since $\phi_F=\rho\circ\phi$, we have that $V_\ii\phi_F = \phi_F V_{\rho(\ii)}$. It follows that:
 		\[ V_\ii\phi_F + \phi_FV_\ii = \phi_F (V_{\rho(\ii)} - V_\ii) = \phi_F \partial h_{\ii\rho(\ii)} + \phi_F h_{\ii\rho(\ii)}\partial. \]

 		For the terms $F\rho h_{\ii\rho(\ii)} + \rho h_{\ii\rho(\ii)}F$, remember that $h_{\ii\rho(\ii)}F \sim Fh_{\ii\rho(\ii)}$.
 		We can hence state that:
 		\[ F\rho h_{\ii\rho(\ii)} + \rho h_{\ii\rho(\ii)}F = F\rho h_{\ii\rho(\ii)} + \rho F h_{\ii\rho(\ii)} = \phi_F \partial h_{\ii\rho(\ii)} + \partial \phi_F h_{\ii\rho(\ii)},  \]
 		where the second equality holds because $\phi_F$ is an homotopy between $F\rho$ and $\rho F$.
 		We found that Equation~\ref{eq:asd} equals to:
 		\[ \phi_F \partial h_{\ii\rho(\ii)} + \phi_F h_{\ii\rho(\ii)}\partial + \phi_F \partial h_{\ii\rho(\ii)} + \partial \phi_F h_{\ii\rho(\ii)}. \]
 		The two repeated terms on the right-hand side simplify each other, leaving us with a null-homotopic map. This implies the thesis for $\mathbf{V}_\ii$. As we already observed, this implies that the thesis also holds for $\widetilde{\mathbf{V}}_\ii$, so we conclude.
 	\end{proof}
 \end{thm}

 \subsection{Extraction of further invariants} \label{sec:altri_inv}
 The cone homology $\hc(K)$ of a strongly invertible knot constitutes a refined invariant. Simpler invariants can be extracted from it.
 
 \subsubsection{Tau invariant}
 Let us start by settling on some notation. Let $C$ be a module on the ring $\F[U]$, we call \emph{torsion} of $C$ the submodule given by
 \[ Tors = Tors (C) = \{ m\in C \;|\; \exists r\in \F[U]\setminus\{0\} \text{ such that } r\cdot m = 0 \}. \]
 If $C$ is bigraded, as are our modules, then the quotient $C/Tors$ inherits a bigrading. Since the modules with which we work are finitely generated, there exists an integer $r$ such that $C/Tors \cong \F[U]^r$. We call $r$ the \emph{rank} of $C$: $rk C = r$.  
 
 Let $C$ be a finitely generated, bigraded $\F[U]$ module of rank at least $1$. Then there exists a decomposition: 
 \[ C \cong \bigoplus_{k=1}^{rk C}F_i \oplus Tors, \] 
 where $F_k \cong \F[U]$ for all $k$.
 Since the multiplication by $U$ is homogeneous of degree $(-2,-1)$, the following definition makes sense.
 \begin{defn}
 	We call $F_k$ a \emph{tower of degree} $i$, or $i$-\emph{tower}, if it is supported in bigradings $(d,s)$ such that $d-2s = i$. 
 \end{defn}
 Observe that this definition does not depend on the choice of the decomposition.
 
 It is a well-known fact (see \cite[Proposition 6.1.4]{ozsvath2015grid}) that the grid homology of a knot has rank 1, given by a $0$-tower.
 This observation motivates the following definition.
 \begin{defn}\cite{ozsvath2015grid}
 	For any knot $K$, $\tau(K)$ is $-1$ times the maximal Alexander grading of a homogeneous non-torsion element in $GH^{-}(K)\setminus\{0\}$.  
 \end{defn}
 As the authors observe in \cite[Remark 6.1.6]{ozsvath2015grid}, the sign is chosen to make $\tau$ agree with its original definition in \cite{KFfour-ball}.
 Our first step in extracting an analog of $\tau$ in the equivariant setting is the following lemma.
 
 \begin{lemma}\label{lemma:2torri}
 	Let $K$ be a \si knot. Then $\hc(K)$ has rank 2, given by a $0$-tower and a $1$-tower.
 	
 	\begin{proof}
 		Consider the long exact sequence in homology from Remark~\ref{rmk:long}
 		By exactness, it is straightforward that $rk\hc(K)\leq 2$; let us explain in more detail the equality and the conditions on the bigradings.
 		
 		The first step is to prove that $Im((Id+\rho)_*)\subseteq Tors GH^-(K)$. The image of a torsion element is necessarily a torsion element; therefore, let $x\in GH^-(K)_{d,s}$ be a non-torsion element and suppose that $\rho_*x$ is a torsion element.
 		Then let $r\in \F[U]$ be a non-zero element such that $r\rho_*x=0$. Since $\rho_*$ is an isomorphism, therefore $rx=0$, hence $\rho_*x$ is a non-torsion element. It follows that $x,\rho_*x\in GH^-(K)_{d,s}$ are non-torsion elements with the same bigrading. Since $rkGH^-(K)=1$, we can consider a decomposition $GH^-(K) = H \oplus Tors$, where $H\cong\F\left[U\right]$. Call $h_{d,s}=H\cap GH^-(K)_{d,s}$. We obtain that there exist $t_1,t_2\in Tors_{d,s}$ such that $x=h_{d,s}+t_1$ and $\rho_*x=h_{d,s}+t_2$, hence $x+\rho_*x\in Tors$.
 		
 		We now prove that $GH^-(K)$ contains a $0$-tower. Let $x\in GH^-(K)_{d,s}$ be a non torsion element, hence $d-2s=0$. If there exist an element $r\F[U]\setminus\{0\}$ such that $ri_*x=0$ by exactness $rx\in Im((Id+\rho)_*)$, and that is absurd since $rx$ is a non torsion element.
 		
 		Similarly, assume without loss of generality that $K$ is in Legendrian position and consider the grid invariant $\lambda^-\in GH^-(K)_{d,s}$. Since it is a non-torsion element \cite[Proposition 6.4.8]{ozsvath2015grid}, it follows that $d-2s=0$. Recall that $\lambda^-=[\xx^-]$ and observe that, since $\G$ is symmetric, the same goes for $x^-$. Hence, $\rho_*\lambda = [\rho \xx^-] = [\xx^-] = \lambda^-$. We have that $(Id+\rho)_*\lambda = 0$, so by exactness there exist an element $y\in\hc(K)_{d+1,s}$ such that $p_*y=\lambda^-$, therefore $y$ must be a non torsion element. By the condition on $d$ and $s$, we obtain that $y$ lies in a $1$-tower.       
 	\end{proof}   
 \end{lemma} 
 By Lemma~\ref{lemma:2torri}, the following definition makes sense.
 
 \begin{defn}
 	Let $i\in\{0,1\}$. For any \si knot $K$, $\tau_i(K)$ is $-1$ times the maximal Alexander grading of a homogeneous non-torsion element lying in the $i$-tower of $\hc(K)\setminus\{0\}$. 
 \end{defn} 
 
 \begin{remark}\label{rmk:tauineq}
 	Observe that, for any \sil knot $K$: $\tau_0(K)\leq\tau(K)\leq\tau_1(K)$.
 	This follows from the proof of Lemma~\ref{lemma:2torri}. In fact, let $x\in GH^-(K)_{-2\tau,-\tau}$, we know that $i_*x$ is a non torsion element, hence $-\tau_0\geq-\tau$. The second inequality is analogous.
 \end{remark}
 
 \subsubsection{Grid invariants}
 Given a knot $K$ and a grid $\G$ representing $K$, it is possible to define the \emph{canonical grid states} $\xx^+(\G)$,$\xx^-(\G) \in \St$ (\cite[Definition 6.4.1]{ozsvath2015grid}). 
 As proved in \cite[Lemma 6.4.2]{ozsvath2015grid}, the canonical grid states are cycles in $\gc$, hence they induce well-defined classes in $GH^-(K)$.
 
 When we see $\G$ as a grid representing a Legendrian knot, the homology classes of the canonical states $\lambda^+(K),\lambda^-(K) \in GH^-(K)$ do not depend on the grid $\G$ up to Legendrian grid moves. Hence, $\lambda^+(K)$ and $\lambda^-(K)$ are two relevant invariants of Legendrian isotopy called \emph{Legendrian grid invariants} (\cite[Definition 12.3.1]{ozsvath2015grid}, originally in \cite{legendriangridinvaria}). 
 
 One of the most interesting properties of Legendrian grid invariants is given by the following theorem.
 \begin{thm}\label{thm:u_mult}{\cite[Proposition 12.3.4]{ozsvath2015grid}}
 	Let $\G$, $\G^+$ and $\G^-$ be grid diagrams whose associated \si Legendrian knots are $K$ and its stabilization $K^+$ and $K^-$ respectively. Then, there are bigraded isomorphism:
 	\[ \phi^-:\gh \rightarrow GH^-(\G^-) \qquad\qquad \phi^+:\gh \rightarrow GH^-(\G^+) \]
 	satisfying:
 	\[ \phi^-(\lambda^+(\G))=\lambda^+(\G^-), \qquad\qquad\qquad U \cdot \phi^+(\lambda^+(\G))=\lambda^+(\G^+), \]\[
 	U \cdot\phi^-(\lambda^-(\G))=\lambda^-(\G^-), \qquad\qquad\qquad  \phi^+(\lambda^-(\G))=\lambda^-(\G^+).\]
 \end{thm}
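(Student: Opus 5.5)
The plan is to work at chain level with the explicit quasi-isomorphism attached to a grid stabilization, as in \cite[Section 5.2]{ozsvath2015grid}, and track where the canonical states go. It suffices to treat $K^-$; the case $K^+$ follows by symmetry (rotate the grid by $\pi$, or equivalently reverse orientation, which exchanges the roles of $\xx^+$ and $\xx^-$ and of the two stabilization signs).

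\textbf{Setup.} Legendrian stabilizations of $L(\G)$ correspond to grid stabilizations of the non-Legendrian types $X{:}SW$ and $X{:}NE$ (up to Legendrian grid moves, which fix $\lambda^\pm$ by invariance). I would use the fact that, after commutations, $K^-$ is represented by an $X{:}SW$ stabilization $\G^-$ of $\G$ at a new intersection point $c$, with the picture of Figure~\ref{img:destab_xsw}.

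\textbf{The map.} As in the proof of Lemma~\ref{lemma:stab_F_h}, split $\mathbf{S}(\G^-)=\mathrm{I}\sqcup\mathrm{N}$ according to whether the state contains $c$. Then $GC^-(\G^-)=\mathrm{Cone}(\partial^{\mathrm N}_{\mathrm I})$. The destabilization quasi-isomorphism $D:GC^-(\G^-)\to\gc$ is
\[ (\xx,\yy)\mapsto e(\xx)+e\circ\mathcal{H}_{X_2}(\yy), \]
composed with the quotient identifying $V_1=V_2$. Here $e$ is the identification of states containing $c$ with states of $\G$, and $\mathcal{H}_{X_2}$ counts empty rectangles whose only $X$-marking is $X_2$. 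Take $\phi^-:=(D_*)^{-1}$, which is a bigraded isomorphism.

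\textbf{Tracking $\xx^+$.} Recall that $\xx^+$ is formed by the NE corners of the $X$-markings and $\xx^-$ by the SW corners. I would check directly from the local picture that $\xx^+(\G^-)$ contains $c$ and that $e(\xx^+(\G^-))=\xx^+(\G)$. This gives $\phi^-(\lambda^+(\G))=\lambda^+(\G^-)$.

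\textbf{Tracking $\xx^-$.} The state $\xx^-(\G^-)$ lies in $\mathrm N$. Its image is $e\circ\mathcal{H}_{X_2}(\xx^-(\G^-))$, so one must enumerate the empty rectangles out of $\xx^-(\G^-)$ that contain $X_2$ and no other $X$. I expect exactly one such rectangle: the small square containing $X_2$ and the new $O$-marking adjacent to it. Its endpoint is the state $e^{-1}(\xx^-(\G))$, and it carries coefficient $V_1\equiv U$. Any larger rectangle should be excluded because it would contain another $X$ or a point of $\xx^-$. This yields $D(\xx^-(\G^-))=U\,\xx^-(\G)$, hence $U\cdot\phi^-(\lambda^-(\G))=\lambda^-(\G^-)$ after applying $\phi^-$.

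\textbf{Consistency check.} As a sanity check, the grading formulas $M(\lambda^\pm)=tb\mp rot+1$ and $A(\lambda^\pm)=(tb\mp rot+1)/2$ show that one class keeps its bigrading under stabilization while the other drops by $(-2,-1)$, which matches the factor $U$.

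\textbf{Main obstacle.} The hard step is this rectangle enumeration for $\mathcal{H}_{X_2}(\xx^-(\G^-))$. The grading check alone cannot replace it, because torsion elements in the same bigrading would leave the answer ambiguous. I would carry out the enumeration carefully on the local $2\times 2$ block around $c$, using that $\xx^-$ occupies the SW corners of all $X$-markings.
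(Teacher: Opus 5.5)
Your strategy is the one behind the cited result: evaluate the explicit $X{:}SW$ destabilization map on the canonical states, as in \cite[Lemma 6.4.6]{ozsvath2015grid}, which the paper also invokes in the proof of Proposition~\ref{prop:d_U_molt}. However, your local analysis is wrong, and the $\lambda^+$ half of the argument rests on a false claim. The two $X$-markings of the $2\times 2$ block lie on a diagonal. You use that $\mathrm{N}$ is a subcomplex, i.e.\ that every rectangle with $c$ as its NW or SE corner is blocked by an $X$, and this puts the $X$'s in the NW and SE squares. Their NE corners are the midpoints of the top and right sides of the block, and their SW corners are the midpoints of the left and bottom sides. So \emph{neither} $\xx^+(\G^-)$ nor $\xx^-(\G^-)$ contains $c$: both lie in $\mathrm{N}$. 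No configuration has exactly one canonical state through $c$; with the $X$'s on the other diagonal both contain $c$, which is the Legendrian $X{:}NW$/$X{:}SE$ case with no factor $U$. Homology forces the same conclusion. Since $V_1-V_2$ is injective on $H(\mathrm{I})\cong GH^-(\G)[V_1]$, every class of $GH^-(\G^-)$ is represented by a cycle in $\mathrm{N}$. Hence the summand $e\circ\pi_{\mathrm{I}}$ of your $D$ induces zero in homology and can never carry $\lambda^+$.

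What you need is $e\circ\mathcal{H}_{X_2}$ evaluated on \emph{both} canonical states, and the contributing rectangles are long, not small. Take $X_2$ to be the $X$-marking SE of $c$. A rectangle from $\xx^\pm(\G^-)$ to a state containing $c$ that meets $\X$ only in $X_2$ must have $c$ as its NW corner, since otherwise it contains the $X$ in the NW square. Its NE and SW corners are then forced to be the points of $\xx^\pm(\G^-)$ on the horizontal and vertical circles through $c$. For $\xx^+$ this is the width-one vertical strip running from $c$ downward around the torus, i.e.\ the column of $X_2$ minus the NE square of the block. For $\xx^-$ it is the width-one horizontal strip running from $c$ rightward around the torus, i.e.\ the row of $X_2$ minus the SW square. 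Both strips are empty, both meet $\X$ only in $X_2$, and they end at $e^{-1}(\xx^+(\G))$ and $e^{-1}(\xx^-(\G))$ respectively. In the $X{:}SW$ configuration the new marking $O_1$ sits in the NE square, so the first strip contains no $O$-marking. The second strip contains $O_2$, the $O$-marking of the row of $X_2$, which lies outside the block. Hence $D(\xx^+(\G^-))=\xx^+(\G)$ and $D(\xx^-(\G^-))=V_2\,\xx^-(\G)$, which gives the statement. In particular, the rectangle you expected for $\xx^-$ (a small one containing $X_2$ and the new $O$) does not contribute. Excluding ``larger rectangles'' would discard exactly the terms that do.
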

 Theorem~\ref{thm:u_mult} provides an obstruction to the stabilization of a grid.
 The objective is to define invariants of equivariant Legendrian isotopy that exhibit an equivariant analogue of this obstruction.
 For clarity, the following definition is introduced.
 \begin{defn}
 	Let $\G$ be a symmetric grid. An \emph{equivariant stabilization} on $\G$ is one of the following:
 	\begin{itemize}
 		\item a NE or SW-type stabilization along the axis;
 		\item a pair of NE or SW-type stabilizations, symmetric with respect to the axis. 
 	\end{itemize}
 	After applying an equivariant stabilization to $\G$, we call \emph{equivariantly stabilized} the resulting symmetric grid.
 \end{defn}
 The focus is on constructing invariants that are sensitive to equivariant stabilizations. 
 Let $\G$ be a symmetric grid. Since the canonical grid states are cycles, we can consider two well-defined classes:
 \[ [(0,\xx^+)],[(0,\xx^-)]\in \hc(\G). \]
 Consider the long exact sequence in Remark~\ref{rmk:long}, we have that:
 \[ i_*(\lambda^\pm(\G))=[(0,\xx^\pm)]\in\hc(\G). \]
 Let $F:\gc\rightarrow\gcc$ be the map induced by a strongly invertible Legendrian grid move. We obtain the commutative square:
 \[  \begin{tikzcd}
 	\gc \arrow{r}{i} \arrow[swap]{d}{F} & 
 	\cono \arrow{d}{\text{Cone}\,F} \\ GC^-(\G') \arrow{r}{i'} & 
 	\text{Cone}_{\G'}(Id+\rho'),
 \end{tikzcd}. \]
 Since $F(\lambda^\pm(\G))=\lambda^\pm(\G)$, we get that the two classes $[(0,\xx^+)],[(0,\xx^-)]\in\hc(\G)$ are invariants of the underlying strongly invertible Legendrian knot.
 
 \begin{defn}
 	Let $K$ be a \sil knot. Referring to the notations in Remark~\ref{rmk:long}, we call \emph{\sil grid invariants} the classes $i_*(\lambda^+(K))$ and $i_*(\lambda^-(K))$ in $\hc(K)$.
 \end{defn} 
 \begin{remark}\label{rmk:brutte_notizie}
 	By $U$-equivariance of the map $i_*: \gh \rightarrow \hc(\G)$, the \sil grid invariants are divisible by $U$ any time the grid $\G$ is stabilized.
 	It follows that $i_*(\lambda^+(K))$ and $i_*(\lambda^-(K))$ are not sensitive to the difference between a grid that has been equivariantly stabilized and one non-equivariantly stabilized. 
 \end{remark}
 
 The following construction is inspired by \cite{sano2024involutive}.
 Given a symmetric grid $\G$, consider the two elements:
 \[ (\xx^\pm,0)\in\cono. \]
 Since the two canonical grid states $\xx^\pm$ are symmetric with respect to the SW to NE axis, $(Id+\rho)(\xx^\pm)=0$. This, together with the fact that the canonical grid states are cycles, implies that $(0,\xx^+)$ and $(0,\xx^-)$ are also cycles, and we get two well-defined classes $[(0,\xx^\pm)]\in\hc(\G)$.
 
 However, consider the map $\text{Cone}\,F:\cono \rightarrow \text{Cone}_{\G'}(Id+\rho')$ induced by a \sil grid move. We have:
 \[ \text{Cone}\,F((\xx^\pm(\G),0)) = (\xx^\pm(\G'),\phi_F(\xx^\pm)). \]
 We do know that $\phi_F(\xx^\pm)\in\gcc$ is a cycle, but not if it is a border. This implies that the classes $[(\xx^\pm,0)]$ could not be preserved by \sil grid moves. A natural approach is to extract information from these classes.
 \begin{defn}\label{def:d}
 	Let $\G$ be a symmetric grid. We call \emph{\sil correction terms} the integers:
 	\[d_e^+(\G) = \max  \left\{ n\in \Z_{\geq0} \;|\; \exists\,y\in\hc(\G)\;\text{s.t.}\;[(\xx^+,0)] + U^ny \in \text{Tors}(\hc(\G)) \right\}\]
 	and
 	\[d_e^-(\G) = \max  \left\{ n\in \Z_{\geq0} \;|\; \exists\,y\in\hc(\G)\;\text{s.t.}\;[(\xx^-,0)] + U^ny \in \text{Tors}(\hc(\G)) \right\}\]
 \end{defn}
 \begin{remark}\label{rmk:d}
 	Let $\G$ be a symmetric grid. For a given homogeneous element $y\in\hc(\G)$, call $\text{A}(y)$ the Alexander grading of $y$. 
 	Note that both $[(\xx^+,0)]$ and $[(\xx^-,0)]$ lie in the $1$-tower of $\hc(\G)$. It follows that:
 	\[ d^\pm_e(\G) = \tau_1(\G) - \text{A}([(\xx^\pm,0)]). \]
 \end{remark}
 The next two propositions demonstrate that the \sil correction terms are invariants of \sil knots and are sensitive to equivariant stabilizations.
 \begin{prop}\label{prop:d_inv}
 	Let $\G,\G'$ be two symmetric grids and  let $F:\gc\rightarrow\gcc$ be the quasi-isomorphism induced by a \sil grid move.
 	Then $d_e^+(\G)=d_e^+(\G')$ and $d_e^-(\G) = d_e^-(\G')$.
 	\begin{proof}
 		Start by noting that $F$ is bigraded, namely it is $(0,0)$-homogeneous. This implies that $ (\xx^\pm(\G'),0) $ and $ F((\xx^\pm(\G'),0))=(\xx^\pm(\G'),\phi_F(\xx^\pm))$ are two non-torsion elements lying in the same bidegree. 
 		
 		Lemma~\ref{lemma:2torri} tells us that $\hc(\G)$ contains a $0$-tower and a $1$-tower. It follows that, for a fixed degree $(d,s)$, $\hc(\G)$ contains only one non-torsion element, up to torsion elements. So, up to torsion elements, $\text{Cone}\,F$ sends $(\xx^\pm(\G),0)$ to $(\xx^\pm(\G'),0)$. The claim follows from Remark~\ref{rmk:d} because $\tau_1(\G)=\tau_1(\G')$.  
 	\end{proof}
 \end{prop}
 
 \begin{prop}\label{prop:d_U_molt}
 	Let $\G$, $\G^+$ and $\G^-$ be grid diagrams whose associated \si Legendrian knots are $K$ and its equivariant stabilization $K^+$ and $K^-$. Assume that both stabilizations are NE- or SW-type along the axis. Then:
 	\[ d_e^+(\G^-)=d_e^+(\G) \quad\quad d_e^-(\G^-)=d_e^-(\G)+1, \]
 	and
 	\[ d_e^+(\G^+)=d_e^+(\G)+1 \quad\quad d_e^-(\G^+)=d_e^-(\G). \]      
 	\begin{proof}
 		The procedure is very close to the proof of the previous proposition.
 		As in \cite[Theorem 12.3.4]{ozsvath2015grid}, we provide the proof in the case of an X:SW negative stabilization; the proofs for the other cases follow the same steps. 
 		Let $F:GC^-(\G^-) \rightarrow \gc$ be the bigraded destabilization map. Note that, in this setting (see \cite[Lemma 6.4.6 Case (S-2)]{ozsvath2015grid}):
 		\[ \text{Cone}\,F((\xx^+(\G^-),0)) = (\xx^+(\G),\phi_F(\xx^+(\G^-))),
 		\]and\[\text{Cone}\,F((\xx^-(\G^-),0)) = (V_i\cdot\xx^+(\G),\phi_F(\xx^+(\G^-))),\]
 		where the index $i$ depends on the index of the $X$-marking lying SE to the point at which we destabilize.
 		
 		Hence, the difference in homology between the class of $\text{Cone}\,F((\xx^+(\G^-),0))$ and the class of $(\xx^+(\G),0)$ is given by $[(0,\phi_F(\xx^+(\G^-)))]$, where $\phi_F(\xx^+(\G^-))$ is a torsion element. We conclude as in Proposition~\ref{prop:d_inv} that the correction terms coincide.
 		The difference in homology between the class of $\text{Cone}\,F((\xx^-(\G^-),0))$ and the class of \[\mathbf{V}_i\cdot(\xx^-(\G),0) = (V_i\cdot\xx^-(\G),\rho\circ h_{i\rho(i)}(\xx^-(\G))) \] is given by $[(0,\phi_F(\xx^+(\G^-))+\rho\circ h_{i\rho(i)}(\xx^-(\G^-)))]$. Also in this case, the difference is a torsion element. In fact, since we have a representative where the first coordinate is zero, $[(0,\phi_F(\xx^+(\G^-))+\rho\circ h_{i\rho(i)}(\xx^-(\G^-)))]$ could only be a non-torsion element lying in the $0$-tower, but the bidegree represents an obstruction: the difference between the Maslov grading an twice the Alexander grading is $1$.
 		We conclude, as for the previous cases, that $[(\xx^-(\G^-),0)]$ and $U\cdot[(\xx^-(\G),0)]$ are non-torsion elements lying in the respective $0$-towers and with the same bidegree. Again, Remark~\ref{rmk:d} and the equality $\tau_1(\G)=\tau_1(\G')$ conclude the proof. 
 	\end{proof}
 \end{prop}
 In the case of a non-equivariant stabilization we do not obtain an induced $\text{Cone}\,F$ map. This precludes determination of the behavior of the \sil correction terms. 
 
 \section{Cobordisms of strongly invertible knots}\label{sec:cobbound}
 In analogy with \cite[Chapter 8]{ozsvath2015grid}, in this section, we recover the well-known genus-bounding property of $\tau$.
 At first, we define the cobordisms of our interest, following \cite{sano2024involutive}. 
 \begin{defn}
 	Let $L$ and $L'$ be two \sil links that share the same involution $\rho$ on $S^3$. A \emph{simple equivariant cobordism} between $L$ and $L'$ is an oriented cobordism $S\subseteq S^3\times[0,1]$ such that $(\rho\times id)S = S$. We call $S$ a \emph{simple isotopy-equivariant cobordism} if $(\rho\times id)S$ is isotopic to $S$. 
 \end{defn}
 In the literature, simple equivariant cobordisms are also called \emph{standard equivariant cobordisms}, see for example \cite{manolescu2025rasmussen}.
 \begin{remark}
 	Fix the following notation. Given a \sil knot $K$ we call $\widetilde{g}_3$ the equivariant genus of $K$, $g_4(K)$ the slice genus of $K$, $\widetilde{g}_4(K)$ the equivariant slice genus of $K$, $\widetilde{sig}_4(K)$ the simple isotopy-equivariant genus and $\widetilde{sg}_4(K)$ the simple equivariant genus of $K$.  
 	There are some obvious inequalities:
 	
 	\centering
 	\begin{tikzpicture}
 		\node (A) at (0.2, 0) {$g_4$};
 		\node (B) at (1.5, 0.75) {$\widetilde{g}_4$};
 		\node (D) at (1.5, -0.75) {$\widetilde{sig}_4$};
 		\node (C) at (3, 0) {$\widetilde{sg}_4$};
 		\node (E) at (4.5,0) {$\widetilde{g}_3$};
 		\node[rotate=30, scale=0.85] at (0.75, 0.375) {${}\leq$}; 
 		\node[rotate=-30, scale=0.85] at (2.25, 0.375) {${}\leq$}; 
 		\node[rotate=-30, scale=0.85] at (0.75, -0.375) {${}\leq$}; 
 		\node[rotate=30, scale=0.85] at (2.25, -0.375) {${}\leq$}; 
 		\node[ scale=0.85] at (3.75, 0) {${}\leq$}; 
 	\end{tikzpicture}
 \end{remark}
 Throughout this section, when referring to a cobordism between two \sil links $L$ and $L'$, it will be taken for granted that $L$ and $L'$ share the same involution $\rho$.
 
 In Section~\ref{sec:proof_genus_bound}, we prove the following theorem.
 
 \begin{thm}\label{thm:genus_bound}
 	Let $K$ and $K'$ be two \si knots connected by a genus $g$ simple equivariant cobordism $S\subseteq [0,1]\times S^3$. Then the equivariant tau invariants $\tau_0$ and $\tau_1$ both provide a lower bound to the genus of the cobordism:
 	\[ \max \left\{|\tau_0(K)-\tau_0(K')|\,,\, |\tau_1(K)-\tau_1(K')| \right\} \leq g. \]
 \end{thm}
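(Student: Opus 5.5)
We follow the strategy of \cite[Chapter 8]{ozsvath2015grid}, adapted to the cone complex. First, we establish an equivariant normal form: after an equivariant isotopy, any simple equivariant cobordism $S$ of genus $g$ between $K$ and $K'$ decomposes into a movie of elementary equivariant pieces. These are equivariant isotopies, which by Theorem~\ref{thm:cromwellSI} correspond to equivariant grid moves, and equivariant births and deaths of unknotted components. The saddle moves come in two kinds: pairs of saddles exchanged by $\rho$, and single saddles centred on the fixed axis. The first step is a Morse-theoretic argument on $S/(\rho\times id)$, or equivalently on the quotient orbifold, rearranging critical points so that births come first, then saddles, then deaths. Genus is then counted by an Euler characteristic argument: the number of split saddles minus merge saddles is controlled by $2g$. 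Symmetric pairs contribute their two saddles together.

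Next, for each elementary move we produce a bigraded-up-to-shift map on the cone homology $\hc$ that is $\F[U]$-linear and respects the tower structure of Lemma~\ref{lemma:2torri}. We need the link versions of the cone complex, where the second condition of Definition~\ref{def:sym_grid} is dropped, together with the analogue of Lemma~\ref{lemma:2torri} for symmetric links. Equivariant isotopies induce isomorphisms by Theorem~\ref{thm:invariance}. For an equivariant pair of saddles we realize the saddle on one side of the axis as a non-equivariant $X$-swap in the grid, and its mirror by $\rho$, as in \cite[Section 8.3]{ozsvath2015grid}. The composition of the two swap maps commutes with $\rho$ up to homotopy by the naturality argument of Theorem~\ref{prop:key}, so Lemma~\ref{lem:induced_map} yields a map of cones. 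A saddle on the axis is realized by a single symmetric $X$-swap across the diagonal, which commutes with $\rho$ on the nose. Births and deaths are handled by adding or removing a doubled $X/O$ square on the axis, as in the extended grids of Remark~\ref{rmk:extended}, or in symmetric pairs.

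Composing these maps and the reverse ones, as in the proof of the classical $\tau$ bound, we obtain maps $\hc(K)\to\hc(K')$ and back whose restrictions to the free parts send the $i$-tower to the $i$-tower. They shift Alexander grading by at most $g$ in absolute value, up to multiplication by $U^{g}$. Because the maps are module maps and $U$ has bidegree $(-2,-1)$, the $d-2s$ value of each tower is preserved, so non-torsion elements of the $0$-tower map into the $0$-tower and likewise for the $1$-tower. The argument of \cite[Theorem 8.1.1]{ozsvath2015grid} then gives $|\tau_i(K)-\tau_i(K')|\le g$ for $i=0,1$.

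The main obstacle is the saddle maps. We must show that each induced map commutes with the cone module structure, i.e.\ with the $\mathbf{V}_i$ multiplications, whose off-diagonal terms $\rho h_{i\rho(i)}$ involve homotopies that may cross the $X$-markings being swapped. We expect Lemma~\ref{lemma:F_h} to handle this only after choosing the index $\ii$ far from the swapped markings, which requires enlarging the grid by equivariant stabilizations first. We also need to check the axis saddle against the special convention for $h_{nn}$ in Remark~\ref{rmk:2homot}. Finally, we must verify non-torsionness of the images: each composite of the cobordism map with the reverse map must equal $U^{g}$ on towers, and this requires the tower classes to be controlled via the long exact sequence of Remark~\ref{rmk:long}.
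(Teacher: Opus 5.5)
Your route has a genuine gap at births and deaths. You want an $\F[U]$-linear map on cone homology for every elementary piece, composed into maps $\hc(K)\to\hc(K')$ and back whose composites are powers of $U$ on towers. This is essentially the content of Conjecture~\ref{congettura}, which the paper notes would imply the theorem and leaves open precisely because no suitable birth map on the cone complex is available. ``Adding or removing a doubled $X/O$ square'' does not produce a chain map of the right bidegree. If you instead define birth and death maps on homology through the splitting of Lemma~\ref{lem:hc_con_unknot}, a death map is a projection onto one summand and can kill a tower. The identities $\mu\circ\sigma=\sigma\circ\mu=U^k$ exist only for saddles, so they do not make the full composite a power of $U$ on towers. (For maps of bidegree $(-2g,-g)$ that power would in any case be $U^{2g}$, not $U^{g}$.)

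The argument of \cite[Theorem 8.1.1]{ozsvath2015grid} that you invoke at the end is not a functoriality argument. Its missing idea, which the paper reproduces, is to avoid birth and death maps altogether. The normal form (Theorem~\ref{thm:normal_form}) turns the cobordism into equivariant saddles $\mathcal{U}_b(K)\to J\to J'\to\mathcal{U}_d(K')$, with exactly $2g$ saddles between the knots $J$ and $J'$. The unlinked unknots are handled by computing $c\hc(\mathcal{U}_d(K))$ outright, so the $\tau_0$- and $\tau_1$-sets of $\mathcal{U}_d(K)$ reduce to $\tau_0(K)$ and $\tau_1(K)$ (Proposition~\ref{prop:tau_con_unknots}). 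Each equivariant saddle then only has to satisfy \emph{inequalities} on $\tau_i^{min}$ and $\tau_i^{max}$ (Theorem~\ref{thm:inequalities_tau}), and these follow from $\sigma,\mu$ alone. The merges from $\mathcal{U}_b(K)$ to $J$ and from $\mathcal{U}_d(K')$ to $J'$ preserve $\tau_i$ by the squeeze $\tau_i^{min}(L')\le\tau_i^{min}(L)\le\tau_i^{max}(L)\le\tau_i^{max}(L')$ (Proposition~\ref{prop:8.5.3}). The middle $2g$ saddles move $\tau_i$ by at most $g$ (Proposition~\ref{prop:8.5.1}). This route needs a $0$/$1$ distinction among the $2^{\ell}$ towers of a symmetric link, where $d-2s$ no longer separates them. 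The paper uses the localized exact sequence and calls $0$-towers those meeting the image of $i$.

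A second, fixable problem: you derive the $\rho$-equivariance of the composed pair of saddle maps from ``the naturality argument of Theorem~\ref{prop:key}''. That argument works only for maps induced by grid moves. Those agree up to homotopy with the canonical transition maps of Juh\'asz--Thurston--Zemke and are therefore unique up to homotopy. A saddle map is a cobordism map, and no such uniqueness is available here. So the homotopy between the two orders of composing the saddle on $A$ and the saddle on $\rho(A)$ must be produced by hand. The paper does this by identifying the states of $\G$ and $\G'$ and quotienting by $V_i-V_j$ and $V_{\rho(i)}-V_{\rho(j)}$. Writing $V$ and $\overline{V}$ for the resulting collapsed variables, $\sigma$ and $\mu$ then just multiply each state by $1$, $V$, $\overline{V}$ or $V\overline{V}$, according to whether it has a point on $A$ and/or on $\rho(A)$. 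This makes them strictly $\rho$-equivariant. For the pentagon-counting maps of Lemma~\ref{lemma:due_selle}, the paper builds an explicit hexagon-counting homotopy instead.
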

 \begin{cor}\label{cor:genus_bound}
 	For a \si knot $(K,\rho)$ and for $i=0,1$, $|\tau_i(K)| \leq \widetilde{sg}_4(K)$. In particular, if $K$ is simple equivariantly slice, then $\tau_i(K)=0$.
 	\begin{proof}
 		As we already assumed, we can extend $\rho$, which acts on $S^3=\partial B^4$, to an involution on the whole four-ball $\rho:B^4\rightarrow B^4$.
 		Let $F$ be a $\rho$-invariant surface in $B^4$, such that $\partial F = K$ and such to minimize the simple equivariant genus of $K$.
 		If we remove a small four-ball from $B^4$, centered at a point in $F$ and symmetric with respect to $\rho$, we obtain a simple equivariant cobordism from $K$ to the unknot $\mathcal{U}$. The claim follows from Theorem~\ref{thm:genus_bound}, because $\tau_0(\mathcal{U})=\tau_1(\mathcal{U})=0$.
 	\end{proof}
 \end{cor}

 \subsection{Normal form of simple equivariant cobordism}
 The focus is on simple equivariant cobordism. A proof of the existence of a normal form for cobordisms in this context is provided. For convenience, the ambient manifold of the cobordism is modified.
 \begin{lemma}\label{lemma:morse_self}
 	Let $W\subseteq S^3\times[-1,3]$ be a simple isotopy-equivariant cobordism between two \sil links $L$ and $L'$. Up to equivariant isotopy, we can assume that the restriction of the projection on the $[-1,3]$ factor: \[ f_W=\pi_{[-1,3]}|_{W}:W \rightarrow [-1,3] \] is a self-indexing Morse function.
 	\begin{proof}
 		We can assume that all the critical points map into the interval $[0,2]$.
 		We have two kinds of critical points: equivariant pairs of critical points and critical points lying on the fixed annuli of $\rho\times id$. 
 		
 		The case of index $0$ critical points is simple. For an equivariant pair of index $0$ critical points, consider an equivariant pair of arcs connecting them to the $0$ level, such that the arcs intersect no other critical point. Such a pair of arcs exists by general position arguments. Now, a neighbourhood of this pair of arcs supports an isotopy of $W$ bringing the pair of critical points to the $0$ level. For an index $0$ critical point, we consider an arc lying on the fixed annulus and proceed with the same steps. This procedure applies symmetrically to the index $2$ critical points. 
 		
 		For the index $1$ points, we must be more careful. For the equivariant pairs, we must perform the steps in \cite[Lemma B.5.3]{ozsvath2015grid} equivariantly for the set of critical points on both "sides" of the fixed annulus. We must slightly modify the same procedure to move the set of points lying on the fixed annulus.
 		
 		Consider $t_1<t_2$ two levels with an index $1$ critical point each, such that $f_W^{-1}((t_1,t_2))$ contains no critical points. We can replace the two critical points with two embedded \emph{saddle bands} (see Figure~\ref{img:bands}) via an equivariant isotopy on $W$. The whole band lies on one only level, hence after this modification, the projection is no longer Morse. Note how, either the core or the co-core of the band lies on the fixed $S^1$ at level $t_i$, $i=1,2$.
 		
 		\begin{figure}[t]
 			\centering
 			\begin{tikzpicture}
 				\draw (0,0) node[above right]{\includegraphics[width=0.25\linewidth]{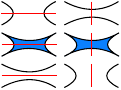}};
 				\draw (-0.5,2.2) node (a) [scale=1] {$t_i+\epsilon$};
 				\draw (-0.5,1.38) node (a) [scale=1] {$t_i$};
 				\draw (-0.5,0.47) node (a) [scale=1] {$t_i-\epsilon$};
 			\end{tikzpicture}
 			\caption{The two possible ways a band may appear. The red axes are portions of the fixed $S^1$, that is intersections of the fixed annulus with a height level.}
 			\label{img:bands}
 		\end{figure}
 				\begin{figure}[t]
 			\centering
 			\includegraphics[width=0.55\textwidth]{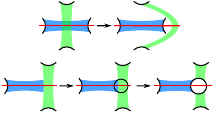}
 			\caption{Arranging bands in the non-disjoint case.}
 			\label{img:arr_bands}  
 		\end{figure}
 		
 		Thanks to the assumption that $f_W^{-1}((t_1,t_2))$ contains no critical points, the cobordism between the levels $t_1$ and $t_2$ is now trivial, that is we can assume it is the product cobordism. We move the band at level $t_1$ to the level $t_2$. If the two bands at level $t_2$ are disjoint we transform them back to index $1$ critical points, that now lie on the same level. Otherwise, the intersection is either on a whole square or at an endpoint of the core of one band. In the first case, we move one of the bands as in the top lane of Figure~\ref{img:arr_bands}. In the second case, we see the union of the two bands as the union of three bands and a circle (see the bottom lane of Figure~\ref{img:arr_bands}).
 		The disk's centre will be an index $0$ critical point, which we move to height $0$ as in the previous steps, ending up with just its boundary $S^1$ at level $t_2$. Transforming the three bands back to critical points, we obtain a critical point on the fixed annulus and a pair of equivariant critical points. We obtained one additional index-$1$ critical point, which compensates for the new index-$0$ critical point. 
 		In both cases, the index $1$ critical points all lie on the same level, iterating this procedure a finite number of times, we conclude.
 	\end{proof}
 \end{lemma}
 The existence of a normal form can now be established.
 Let $K$ be a knot. Call $\mathcal{U}_b(K)$ the disjoint union of $K$ with $b\geq 0$ unknotted components. If $(K,\rho)$ is a \si knot, it is assumed that $\rho$ sends $\mathcal{U}_b(K)$ to itself, switching the orientation on every component. This implies that the unknotted components are either strongly invertible components, which meet the fixed-point axis of $\rho$ at two points, or pairs of components exchanged by $\rho$. 
 \begin{thm}\label{thm:normal_form}
 	Let $K_1$ and $K_2$ be two \sil knots connected by a simple equivariant genus $g$ cobordism $W\subseteq S^3\times [0,2]$. Then there exist \sil knots $K_1'$ and $K_2'$, integers $b,d \geq 0$, and links $\mathcal{U}_b(K_1)$ and $\mathcal{U}_d(K_1)$ such that:
 	\begin{enumerate}
 		\item $\mathcal{U}_b(K_1)$ can be obtained from $K_1'$ via $b$ simultaneous oriented saddle moves. 
 		\item $K_1'$ and $K_2'$ can be connected by a sequence of $2g$ oriented saddle moves.
 		\item $\mathcal{U}_d(K_2)$ can be obtained from $K_2'$ via $d$ simultaneous oriented saddle moves.
 	\end{enumerate}
 	\begin{proof}
 		The proof is the same as the one of \cite[Proposition B.5.1]{ozsvath2015grid}. Again, to simplify the notation, suppose that $W\subseteq S^3\times[-1,3]$. By Lemma~\ref{lemma:morse_self}, we can assume that $f_W$ is a self-indexing Morse function. Then, for some integers $b$ and $d$, we can identify $f_W^{-1}(0.1)$ with $\mathcal{U}_b(K_1)$ and $f_W^{-1}(1.9)$ with $\mathcal{U}_b(K_2)$. Note that $b$ and $d$ are respectively the number of index $0$ and index $2$ critical points of $f_W$.
 		
 		Now we can choose $b$ saddle points such that the corresponding handles turn $\mathcal{U}_b(K_1)$ into a knot, that we call $K_1'$. We move these points to level $0.2$. Similarly, we can choose $d$ saddle points such that the corresponding handles turn $\mathcal{U}_d(K_2)$ into a knot, which we call $K_2'$. We move these points to level $1.8$.
 		We can rename the resulting Morse function $g_W$, so $g_W^{-1}(0.5)$ and $g_W^{-1}(1.5)$ will provide respectively $K_1'$ and $K_2'$.
 		A straightforward computation on the Euler characteristic shows that the remaining saddle points between $K_1'$ and $K_2'$ must be $2g$. 
 	\end{proof}
 \end{thm}
 
 \subsubsection{Collapsed grid homology for links}
  In \cite[Section 8.2]{ozsvath2015grid}, the authors present a version of grid homology for links called \emph{collapsed grid homology}. We briefly recall this definition. 
  Fixed a grid $\G$, the construction of the grid complex $\gc$ does not depend on the number $l$ of components of the link represented by $\G$. Lemma~\ref{eU} only grants the chain homotopy of the multiplication by variables in the case of multiplication by two variables $V_i$ and $V_j$ associated to $O$-markings $O_i$ and $O_j$ belonging to the same component. In order to obtain an $\F[U]$-module in homology, one chooses a subset $\{ O_{i_1},\dots,O_{i_l} \} \subseteq \Oo$ of $O$-markings, one per component. The collapsed grid complex is the quotient:
  \[ c\gc = \frac{\gc}{V_{i_1} = \dots = V_{i_l}}. \]   
  
  One checks that nothing changes in the case of the cone complex, once we prove that the quotient is equivariant.
  Let $\G$ be a symmetric grid representing an $l$-component link $L$.
  Fix an $O$-marking on $\G$ for every component of $L$ in the following way:
  \begin{itemize}
  	\item If the $j$-th component intersects the axis, we can assume up to stabilization to have at least an $O$-marking along the SW to NE axis. Call $O_{i_j}$ such $O$-marking. 
  	\item If the $j$-th component does not intersect the axis, fix any $O$-marking on it as $O_{i_j}$ and its symmetric $\rho(O_{i_j})$ as $O_{i_{\rho(j)}}$.
  \end{itemize}
  If at least one of the fixed $O$-markings lies on the axis, assume that it is $O_{i_1}$. In such case, we obtain $c\gc$ quotienting $\gc$ by the submodule $$(V_{i_1}-V_{i_2},\dots,V_{i_1}-V_{i_l})\gc,$$ which is $\rho$-equivariant.
  Otherwise, we can write the set of fixed $O$-markings as:
  \[ \{ O_1,\dots,O_{l/2},O_{\rho(1)},\dots,O_{\rho(l/2)}. \} \]
  In this case, as a $\rho$-equivariant submodule, consider:
  \[ (V_{1}-V_{\rho(1)},V_{1}-V_{2},\dots,V_{1}-V_{l/2}, V_{\rho(1)}-V_{\rho(2)},\dots,V_{\rho(1)}-V_{\rho(l/2)})\gc. \] 
  In both cases, we then have an induced morphism:
  \[ \rho: c\gc \rightarrow c\gc, \]
  And, consequently, we can define $c\cono$ as:
  \[ c\cono = \text{Cone}\left(  Id+\rho : c\gc \rightarrow c\gc \right). \]
  As in the connected case, the homology $c\hc(\G)$ of the collapsed cone complex only depends on the underlying link $L$ and not on the fixed grid $\G$. 
  The only distinction is that collapsed grid homology for links is sensitive to changes in the orientation of the components. We will assume to work with oriented links. 
  
  Proposition~\ref{prop:exact_seq} and Remark~\ref{rmk:long} still hold.
  In light of \cite[Proposition 8.3.2]{ozsvath2015grid}, it follows that, by exactness, the rank of $c\cono/\text{Tors}$ is $2^l$ and we can define the following set of invariants:
  \begin{defn}
  	Let $\G$ be a symmetric grid representing an $l$-component link $L$.
  	Choose a generating set of $2^l$ elements for $c\hc(L)/\text{Tors}$, with the property that each element is homogeneous with respect to both the Alexander and the Maslov grading.
  	The unordered set obtained by multiplying the Alexander gradings of these generators by $-1$ is the \emph{equivariant $\tau$-set} of $L$.
  \end{defn}
  To reflect the two invariants $\tau_0$ and $\tau_1$ in the case of a knot, we can refine the equivariant $\tau$-set, splitting it into two sets. 
  Since localizing is exact, the cone homology long exact sequence from Remark~\ref{rmk:long} remains exact if we invert $U$. Moreover, since the connecting morphism $(Id+\rho)_*$ has only image in the torsion part of $c\gh$, we obtain a short exact sequence:
  \[ 0 \rightarrow c\gh \otimes \F[U^{\pm1}] \xrightarrow{i} c\hc(L) \otimes \F[U^{\pm1}] \rightarrow c\gh \otimes \F[U^{\pm1}] \rightarrow 0. \]
  It follows that, out of the $2^l$ towers in $c\hc(L)\otimes \F[U^{\pm1}]$, $2^{l-1}$ intersect non trivially $i(c\gh\otimes \F[U^{\pm1}])$.
  We call any such tower a \emph{$0$-tower}.
  Clearly, for any tower in $c\hc(L)\otimes \F[U^{\pm1}]$ we get a corresponding generator in $c\hc(L)/Tors$, and hence a corresponding value in the equivariant $\tau$-set. 
  The $2^{l-1}$ values in the equivariant $\tau$-set corresponding to the $2^{l-1}$ $0$-towers form the $\tau_0$-set of $L$, the remaining $2^{l-1}$ are the $\tau_1$-set of $L$.
  
  For both sets, we are primarily interested in two values. We can characterize them as follows: $\tau_0^{min}$ (resp. $\tau_1^{min}$) is $(-1)$ times the maximal Alexander grading of any homogeneous, non-torsion element in $c\hc(L)$ such that, when tensorizing by $\F[U^{\pm1}]$, it belongs to a $0$-tower (resp. it does not belong to a $0$-tower); $\tau_0^{max}$ (resp. $\tau_1^{max}$) works in the same way, but we look for the minimal Alexander grading of a homogeneous element that is not contained in $U\cdot (c\hc(L)/Tors)$.
 
\subsection{The genus bound}\label{sec:genusbound}
  This section follows the constructions presented in \cite[Chapter 8]{ozsvath2015grid}. 
  As previously established, cobordisms can be described by decomposing them into simpler pieces: isotopies, birth/death of an unknotted unlinked component, and saddle moves.
  For a simple equivariant cobordism, isotopies must be equivariant with respect to the action of $\rho$. The following definitions indicates that this requirement also applies to birth, deaths, and saddle moves. 
  \begin{defn}
  	We call \emph{equivariant saddle move} one of the following:
  	\begin{itemize}
  		\item A saddle move performed along the SW to NE axis.
  		\item A pair of saddle moves that are symmetric to each other with respect to the SW to NE axis.
  	\end{itemize}
  \end{defn}
  \begin{defn}
  	We call \emph{equivariant birth} (respectively \emph{death}) either the addition (resp. removal) of an unknotted unlinked component setwise fixed by $\rho$ intersecting the SW to NE axis in two points, or the addition (resp. removal) of two unknotted unlinked components interchanged by $\rho$.
  \end{defn}
  \begin{remark}\label{rmk:eq_saddle}
  	Elementary equivariant cobordisms connect two links that are symmetric with respect to the fixed-point circle, which we will call \emph{symmetric links} or \emph{involutive links}.
  	Such links are not necessarily \si, since equivariant saddle moves may alter both the number of link components and the number of intersections with the symmetry axis so that the \si condition may no longer be satisfied.   	 
  \end{remark}
  The goal is to demonstrate that both $\tau_0$ and $\tau_1$ provide a lower bound for the genus of a cobordism, analogous to the classical $\tau$ invariant. This will be achieved by utilizing the maps and constructions from \cite[Sections 8.3 and 8.4]{ozsvath2015grid}, decomposing the cobordism into elementary pieces.
  Let $L$ and $L'$ be two involutive links. If $L$ and $L'$ are connected by an equivariant isotopy, then, in analogy with Theorem~\ref{thm:invariance}, we can find a bigraded isomorphism of $\F[U]$-modules between $c\hc(L)$ and $c\hc(L')$, induced by \si grid move.  
  The remaining cases are addressed in the following paragraphs.
  
  \subsubsection{Saddles}
  The analysis begins with the maps associated with saddle moves.
  The first case considered is that of a pair of saddle moves exchanged by $\rho$.
  Let $L'$ be an involutive link obtained from $L$ by a pair of $\rho$-equivariant saddle moves, and let $\G$ and $\G'$ be symmetric grids respectively representing $L$ and $L'$.  
  All the possible configurations occur: the two saddles may form a pair of split moves, a pair of merge moves, or consist of one split move and one merge move.
  We begin with some auxiliary lemma. Let $W= \F_{(0,0)}\oplus\F_{(-1,-1)}$. 
  \begin{lemma}\label{lem:hom_quoz_tens_W}
  	If $O_1$ and $O_{\rho(1)}$ are on the same component of the link specified by $\G$, then there is an isomorphism of bigraded $\F[U]$-modules
  	\[ H\left( \frac{c\cono}{(\mathbf{V}_1-\mathbf{V}_{\rho(1)})} \right) = c\hc(\G) \otimes W. \] 
  	If $O_1$ and $O_2$ are on the same component of the link specified by $\G$, distinct from the one shared by $O_{\rho(1)}$ and $O_{\rho(2)}$, then there is an isomorphism of bigraded $\F[U]$-modules
  	\[ H\left( \frac{c\cono}{(\mathbf{V}_1-\mathbf{V}_2,\mathbf{V}_{\rho(1)}-\mathbf{V}_{\rho(2)})} \right) = c\hc(\G) \otimes W^{\otimes 2}. \] 
  	\begin{proof}
  		Focus on the first case. Multiplication by $\mathbf{V}_1$ and by $\mathbf{V}_{\rho(1)}$ are chain homotopic as endomorphisms of $c\cono$. We have that $\mathbf{V}_{1}-\mathbf{V}_{\rho(1)}$ is an injective morphism, and it is chain homotopic to the $0$ map. By classical results (see for instance \cite[Appendix A.3]{ozsvath2015grid}) these properties give quasi-isomorphisms:
  		\[ c\hc(\G) \otimes W \rightarrow \text{Cone}_\G\left(\mathbf{V}_1-\mathbf{V}_{\rho(1)} \right) \rightarrow \frac{c\cono(\G)}{(\mathbf{V}_1-\mathbf{V}_{\rho(1)})}. \]
  		The second case is obtained by iterating the procedure.
  	\end{proof}
  \end{lemma}
  Let $i=1,\dots,n$ be an index. The submodule $(V_i-V_{\rho(i)})\gc$ is $\rho$-equivariant, hence we obtain a well-defined map:
  \[ \rho:\frac{\gc}{V_i-V_{\rho(i)}} \rightarrow \frac{\gc}{V_i-V_{\rho(i)}}. \]
  As in the non-quotiented case, $\rho$ is an $\F$-linear map, and we can consider the vector space:
  \[ \text{Cone}_\G(\overline{Id+\rho}) := \text{Cone}_\G\left( Id + \rho:\frac{\gc}{V_i-V_{\rho(i)}} \rightarrow \frac{\gc}{V_i-V_{\rho(i)}} \right), \]
  which we endow with the multiplications $\{\mathbf{V}_k \;|\; k\in\{1,\dots,n\}\setminus\{i,\rho(i)\}\}$ that we already know.
  \begin{lemma}\label{lem:conoquoz_quozcono}
  		Let $\G$ be a size $n$ symmetric grid an assume that $\rho(1)=2$.
  		Then there is an isomorphism of $\F[V_3,\dots,V_n]$-modules:
  		\[ \frac{\cono}{\mathbf{V}_1-\mathbf{V}_2} \xrightarrow{f} \text{Cone}_\G(\overline{Id+\rho}).  \]
  		\begin{proof}
  			The projection $\gc \xrightarrow{\pi}\gc/(V_1-V_2)$ is $\rho$-equivariant, so we can induce a surjective projection:
  			\[ \cono \xrightarrow{\mathfrak{p} :=\begin{pmatrix}
  					\pi & 0 \\ 0 & \pi
  			\end{pmatrix}} \text{Cone}_\G(\overline{Id+\rho}). \]
  			As usual, this map is only $\F$-linear. To check that it is a module map is a quick computation. 
  			
  			Note that, by the hypotesis that $\rho(1)=2$, the lower-left coefficients of the multiplications by $\mathbf{V}_1$ and $\mathbf{V}_2$ agree. Hence, working in modulo two coefficients, for any $(x,y)\in\cono$ we get:
  			\[ (\mathbf{V}_1-\mathbf{V}_2)(z,w) = ((V_1-V_2)z,(V_1-V_2)w) \]
  			It is now straightforward to note that $(\mathbf{V}_1-\mathbf{V}_2)\cono$ is contained in $\text{Ker}\,\mathfrak{p}$. Assume that $(x,y)\in \text{Ker}\,\mathfrak{p}$, i.e. $(\pi(x),\pi(y))=(0,0)$. Then there exists $z,w\in\gc$ sucht that $x=(V_1-V_2)z$ and $y=(V_1-V_2)w$, and:
  			\begin{align*}
  				(x,y) &= ((V_1-V_2)z,(V_1-V_2)w) \\
  				&= ((V_1-V_2)z,\rho h_{12}z+\rho h_{12}z +(V_1-V_2)w) \\
  				&= (\mathbf{V}_1-\mathbf{V}_2)(z,w).
  			\end{align*}
  			This implies the thesis.   
  		\end{proof}
  	\end{lemma}
  	\begin{remark}\label{rmk:vero_conoquoz_quozcono}
  		Lemma~\ref{lem:conoquoz_quozcono} can be adapted to the case in which $V_{\rho(1)}= V_1$ and we quotient by the ideal $(V_1-V_2,V_1-V_{\rho(2)})$. It will be enough to apply Lemma~\ref{lem:conoquoz_quozcono} quotienting by $V_2-V_{\rho(2)}$ and then iterate quotienting by $V_1-V_2$. We can invoke the same result, since $V_1-V_2$ is equivariant under the quotient.
  	\end{remark}
  	The following is the final auxiliary lemma. In the previous section, the collapsed grid complex was defined. For a component crossing the axis, the choice of variables to quotient by is rigid. The next lemma provides a more flexible approach.
  \begin{lemma}\label{lem:quella_roba_su_quozientare_diversamente}
  	Let $L$ be an involutive link. Assume that there exists a component $K\subseteq L$ that meets the axis of symmetry. Consider a symmetric grid $\G$ representing $L$. We can assume that:
  	\[ c\gc = \frac{\gc}{(V_{i_1}-V_{i_2},\dots,V_{i_1}-V_{i_l})\gc},  \]
  	Where $O_{i_1}$ belongs to the component representing $K$.
  	Assume that $\rho(2)=3$ and consider the quotient module:
  	\[ c'\gc = \frac{\gc}{(V_{i_1}-V_{a},V_{i_1}-V_{\rho(a)},V_{i_1}-V_{i_4},\dots,V_{i_1}-V_{i_l})\gc},  \]
  	where $O_a$ and its symmetric $O_{\rho(a)}$ both belong to the component associated with $K$. Note that we preserved the $\rho$-equivariance.
  	Then $c\hc$ is isomorphic as $F[U]$-module to:
  	\[ c'\hc = c'H\cono = H\text{Cone}(Id+\rho: c'\gc \rightarrow c'\gc ). \]
  	In particular, $c\hc$ and $c'\hc$ share the $\tau_0$-set and the $\tau_1$-set.
  	\begin{proof}
  		Observe that:
  		\[ \frac{c\gc}{(V_{i_1}-V_a,V_{i_1}-V_{\rho(a)})} = \frac{c'\gc}{(V_{i_1}-V_a,V_{i_1}-V_{\rho(a)})} \]
  		as quotients of $\gc$.
  		By Remark~\ref{rmk:vero_conoquoz_quozcono} we know that
  		\[ \text{Cone}\left(Id+\rho: \frac{c\gc}{(V_{i_1}-V_a,V_{i_1}-V_{\rho(a)})} \rightarrow \frac{c\gc}{(V_{i_1}-V_a,V_{i_1}-V_{\rho(a)})} \right) \]
  		is isomorphic to
  		\[ \frac{c\cono}{\mathbf{V}_{i_1}-\mathbf{V}_{a},\mathbf{V}_{i_1}-\mathbf{V}_{\rho(a)}}. \]
  		By Lemma~\ref{lem:hom_quoz_tens_W}, there exists an $\F[U]$-module isomorphism:
  		\[ H\left( \frac{c\cono}{\mathbf{V}_{i_1}-\mathbf{V}_{a},\mathbf{V}_{i_1}-\mathbf{V}_{\rho(a)}} \right) = c\hc \otimes W^{\otimes 2}. \]
  		Doing the same with $c'\gc$ and remembering about the firs equivalence on this proof, it follows that there is an isomorphism of $\F[U]$-modules:
  		\[ c\hc \otimes W^{\otimes 2} = c'\hc \otimes W^{\otimes 2}.  \]
  		Since we work with free finitely generated complexes of modules over $\F[V_1,\dots,V_n]$, this implies the thesis.
  	\end{proof}
  \end{lemma}
  The next proposition addresses the cases of a pair of $\rho$-equivariant splits or merge saddles.
  
  \begin{prop}\label{prop:2saddle_buono}
  	If $L'$ is obtained from $L$ by a $\rho$-equivariant pair of split moves, then there are $\F[U]$-module maps:
    \begin{align*}
    	\sigma &: c\hc(L) \otimes W^{\otimes 2} \rightarrow c\hc(L') \\
    	\mu &:  c\hc(L') \rightarrow c\hc(L) \otimes W^{\otimes 2}
    \end{align*}
    with the following properties:
    \begin{itemize}
    	\item $\sigma$ is homogeneous of degree $(-2,0)$,
    	\item $\mu$ is homogeneous of degree $(-2,-2)$.
    	\item $\mu \circ \sigma$ and $\sigma \circ \mu$ both are the multiplication by $U^2$.
   \end{itemize}
  	\begin{proof}
  		Let $\G$ and $\G'$ be two symmetric grids representing $L$ and $L'$. Up to stabilizations, and up to planar isotopies, we can assume that the saddles are as specified by Figure~\ref{fig:sad_segm}. 
  			\begin{figure}
  			\centering
  			\begin{tikzpicture}
  				\draw (0,0) node[above right]{\includegraphics[width=0.73\linewidth]{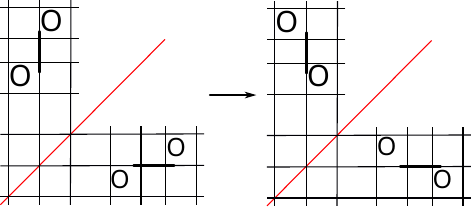}};
  				\draw (1.18,3.28) node[scale=0.9] (a) {$A$};
  				\draw (0.8,2.6) node[scale=0.75] (a) {$i$};
  				\draw (1.43,3.68) node[scale=0.75] (a) {$j$};
  				\draw (2.8,0.46) node[scale=0.55] (a) {$\rho(i)$};
  				\draw (3.92,1.1) node[scale=0.55] (a) {$\rho(j)$};
  				\draw (6.15,3.64) node[scale=0.75] (a) {$i$};
  				\draw (6.82,2.6) node[scale=0.75] (a) {$j$};
  				\draw (8.16,1.08) node[scale=0.55] (a) {$\rho(i)$};
  				\draw (9.29,0.46) node[scale=0.55] (a) {$\rho(j)$};
  			\end{tikzpicture}
  			\caption{The case of a pair of $\rho$-equivariant saddles. The thickened segments indicate $A$ and its symmetric $\rho(A)$.}
  			\label{fig:sad_segm}
  		\end{figure}
  		Identify the states of $\G$ and $\G'$. Define the two sets of states $\mathcal{A}$ and $\overline{\mathcal{A}}=\rho(\mathcal{A})$ as the states which have a component on the segment $A$ and $\rho(A)$ respectively. 
  		Observe that $O_i$ and $O_j$ are on different components of $\G'$ (and hence also $O_{\rho(i)}$ and $O_{\rho(j)}$). By Lemma~\ref{lem:quella_roba_su_quozientare_diversamente}, we can assume that, when defining $c\gcc$, we molded out by $V_i-V_j$ and by $V_{\rho(i)}-V_{\rho(j)}$. This implies that in both $c\gcc$ and in $c\gc/(V_i-V_j,V_{\rho(i)}-V_{\rho(j)})$ ve have that:
  		\begin{itemize}
  			\item $V_i$ and $V_j$ induce the same multiplication, which we call multiplication by $V$.
  			\item $V_{\rho(i)}$ and $V_{\rho(j)}$ induce the same multiplication, that we call multiplication by $\overline{V}$.
  		\end{itemize}
  		Now, \cite[Proposition 8.3.1]{ozsvath2015grid} provide the saddle maps $\sigma$ (for the split) and $\mu$ (for the merge) in the non-equivariant case:
  		 \[ \sigma: \frac{c\gc}{(V_i-V_j)} \rightarrow c\gcc \quad\text{ and }\quad \mu: c\gcc \rightarrow \frac{c\gc}{(V_i-V_j)}\]
  		 defined on states as:
  		 \begin{equation}\label{eq:sigma_mu} \sigma(\xx)=\begin{cases}
  		 	V\cdot\xx  &\xx\in \mathcal{A} \\
  		 	\xx &\xx\notin\mathcal{A}.
  		 \end{cases} \quad\text{ and }\quad
  		 \mu(\xx)=\begin{cases}
  		 	\xx  &\xx\in \mathcal{A}\\
  		 	V \cdot\xx &\xx\notin\mathcal{A}.
  		 \end{cases} 
  		 \end{equation}
  		We compose the two split maps $\sigma$ and the two merge maps $\mu$ from \cite[Proposition 8.3.1]{ozsvath2015grid} (one set of $\{\sigma,\mu\}$ maps for the saddle along $A$ and one for the saddle along $\rho(A)$) obtaining two maps defined on states as:
  		
  		 \[ \sigma: \frac{c\gc}{(V_i-V_j,V_{\rho(i)}-V_{\rho(j)})} \rightarrow c\gcc, \; \sigma(\xx)=\begin{cases}
  		 	V\cdot \overline{V} \cdot\xx  &\xx\in \mathcal{A}\cap\overline{\mathcal{A}}\\
  		 	V\cdot \xx &\xx\in\mathcal{A}\cap\overline{\mathcal{A}}^c\\
  		 	\overline{V}\cdot \xx &\xx\in\mathcal{A}^c\cap\overline{\mathcal{A}}\\
  		 	\xx &\xx\in\mathcal{A}^c\cap\overline{\mathcal{A}}^c.
  		 \end{cases} \]
  		 and:
  		 \[ \mu: c\gcc \rightarrow \frac{c\gc}{(V_i-V_j,V_{\rho(i)}-V_{\rho(j)})},
  		 \;
  		 \mu(\xx)=\begin{cases}
  		 	\xx  &\xx\in \mathcal{A}\cap\overline{\mathcal{A}}\\
  		 	\overline{V}\cdot \xx &\xx\in\mathcal{A}\cap\overline{\mathcal{A}}^c\\
  		 	V\cdot \xx &\xx\in\mathcal{A}^c\cap\overline{\mathcal{A}}\\
  		 	V \cdot \overline{V}\cdot \xx &\xx\in\mathcal{A}^c\cap\overline{\mathcal{A}}^c.
  		 \end{cases}
  		 \]
  		The conditions on the compositions are trivially satisfied.
  		From \cite[Proposition 8.3.1]{ozsvath2015grid} we know that $\sigma$ and $\mu$ are homogeneous chain maps with the correct bigradings. 
  		We must check the $\rho$-equivariance to induce maps on the cone complex. 
  		It is enough to prove it on states. On the $\rho$-invarivariant sets $\mathcal{A} \cap \overline{\mathcal{A}}$ and $\mathcal{A}^c \cap \overline{\mathcal{A}}^c$ we have that $\rho \circ \sigma = \sigma \circ \rho$. If, for example, $\xx \in \mathcal{A} \cap \overline{\mathcal{A}}^c$, we still achieve commutativity thanks to the action of $\rho$ on the multiplications. In fact: 
  		$$\rho \circ \sigma (\xx) = \rho (V\cdot \xx) = \overline{V} \cdot \xx$$ 
  		and
  		$$ \sigma \circ \rho (\xx) = \sigma(\rho(\xx)) = \overline{V} \cdot \xx. $$ 
  		The same computations work with $\mu$. It follows that we can induce maps $\sigma$ and $\mu$ between $c\ccono$ and
  		\[ \text{Cone}_\G\left(Id+\rho: \frac{c\gc}{(V_i-V_j,V_{\rho(i)}-V_{\rho(j)})} \rightarrow \frac{c\gc}{(V_i-V_j,V_{\rho(i)}-V_{\rho(j)})}\right). \]
  		By Lemma~\ref{lem:hom_quoz_tens_W} and Remark~\ref{rmk:vero_conoquoz_quozcono}, we obtain the thesis.
  	\end{proof}
  \end{prop}
  The case of a $\rho$-equivariant pair of saddles consisting of one merge move and one split move is analogous.
  \begin{prop}\label{prop:2saddle_cattivo}
  		If $L'$ is obtained from $L$ by a $\rho$-equivariant pair of saddle moves, made of one merge and one split, then there are $\F[U]$-module maps:
  	\begin{align*}
  		\varphi &: c\hc(L) \rightarrow c\hc(L') \\
  		\psi &:  c\hc(L') \rightarrow c\hc(L)
  	\end{align*}
  	with the following properties:
  	\begin{itemize}
  		\item $\varphi$ is homogeneous of degree $(-2,-1)$,
  		\item $\psi$ is homogeneous of degree $(-2,-1)$.
  		\item $\psi \circ \varphi$ and $\varphi \circ \psi$ both are the multiplication by $U^2$.
  	\end{itemize}
  	\begin{proof}
  		The setting is still the one in Figure~\ref{fig:sad_segm}. This time, $V_i$ and $V_j$ (and hence $V_{\rho(i)}$ and $V_{\rho(j)}$) belongs to different components of both $\G$ and $\G'$. Again, we assume to mold out by $V_i-V_j$ and by $V_{\rho(i)}-V_{\rho(j)}$ in the definition of both $c\gc$ and $c\gcc$.
  		In the previous case, we composed two $\sigma$ maps and two $\mu$ maps. In this case, for both $\varphi$ and $\psi$, we will compose one split map and one merge map to obtain the map we are looking for. 
  		
  	    In this point, we must pay attention. Consider the relative positions of $O_i$ and $O_j$ in Figure~\ref{fig:sad_segm}. We are assuming that the map going from the grid on the left to the grid on the right is a split map, and the merge map goes the other way. We can assume this up to a rotation of the saddle trough grid moves.
  	    If we assume the opposite, namely that we have a split saddle going from right to left, a straightforward computation shows that, referring to Equation~\ref{eq:sigma_mu} in the proof of Proposition~\ref{prop:2saddle_buono}, it is enough to switch the definitions of $\sigma$ and $\mu$.
  	    In the current setting, by symmetry, we have that the relative positions of $O_i$ with respect to $O_j$ and of $O_{\rho(i)}$ with respect to $O_{\rho(j)}$ are the same, but one map is a split and one map is a merge. It follows that, sticking to the definitions of $\sigma$ and $\mu$ that we provided in the proof of Proposition~\ref{prop:2saddle_buono}, we are again composing two $\sigma$ and two $\mu$.
  	    It follows, that the definition of the maps is analogous to the case that we already showed. We then reach the conclusion through the same steps.
  	\end{proof}
  \end{prop}
  For a saddle along the symmetry axis, the procedure is analogous. The computations are simpler, and the steps follow those in \cite[Proposition 8.3.1]{ozsvath2015grid}. The $\rho$-equivariance is achieved because the segment $A$, which defines the subset of states $\mathcal{A}$, collapses to a single point $c$. After appropriate stabilizations, the configuration is shown in Figure~\ref{fig:sad_segm_2}. In fact, switching the $O$-markings introduces two crossings, which can be simplified using a pair of kinks (see Figure~\ref{fig:sad_segm_3}).
  \begin{figure}
  \begin{subfigure}{\linewidth}
  	\centering
  	\begin{tikzpicture}
  		\draw (0,0) node[above right]{\includegraphics[width=0.35\linewidth]{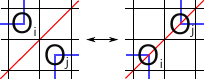}};
  		\draw (-0.15,1) node[scale=1.3] (a) {$\G$};
  		\draw (5.07,1) node[scale=1.3] (a) {$\G'$};
  		\draw (0.86,0.84) node[scale=1.1] (a) {$c$};
  		\draw (4.04,0.82) node[scale=1.1] (a) {$c'$};
  		\draw (1.025,1.01) node[scale=0.65] (a) {$\bullet$};
  		\draw (3.81,1.01) node[scale=0.65] (a) {$\bullet$};
  	\end{tikzpicture}
  	\caption{The case of a split/merge move along the symmetry axis. The blue strands represent the knot portion.}
  	\label{fig:sad_segm_2}
  \end{subfigure}
  \begin{subfigure}{\linewidth}
  	\centering
    \includegraphics[width=0.8\linewidth]{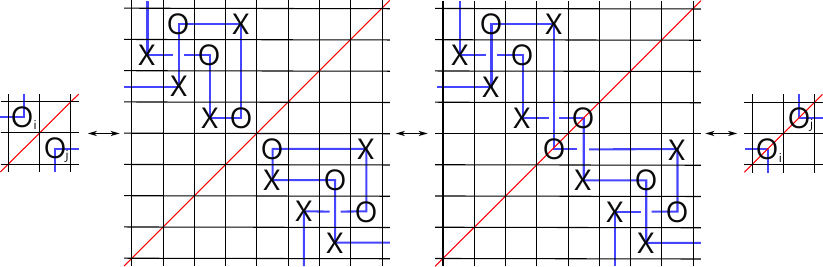}
  	\caption{Left to right. First step: equivariant stabilizations (Reidemeister R1). Second step: saddle. Third step: equivariant commutations and stabilizations (R2).}
  	\label{fig:sad_segm_3}
  \end{subfigure}
  \caption{}
  \end{figure}
  The following proposition is stated without proof.
  \begin{prop}
  		If $L'$ is obtained from $L$ by a split move along the axis, then there are $\F[U]$-module maps:
  	\begin{align*}
  		\sigma &: c\hc(L) \otimes W \rightarrow c\hc(L') \\
  		\mu &:  c\hc(L') \rightarrow c\hc(L) \otimes W
  	\end{align*}
  	with the following properties:
  	\begin{itemize}
  		\item $\sigma$ is homogeneous of degree $(-1,0)$,
  		\item $\mu$ is homogeneous of degree $(-1,-1)$.
  		\item $\mu \circ \sigma$ and $\sigma \circ \mu$ both are the multiplication by $U$.
  	\end{itemize}
  \end{prop}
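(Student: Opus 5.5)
We mirror the proof of Proposition~\ref{prop:2saddle_buono}, with the single-saddle maps of \cite[Proposition 8.3.1]{ozsvath2015grid} replaced by their axis versions. After equivariant stabilizations, commutations and the kink moves of Figure~\ref{fig:sad_segm_3}, we take symmetric grids $\G$ and $\G'$ for $L$ and $L'$ that differ exactly as in Figure~\ref{fig:sad_segm_2}. They have the same $\X$-markings, and two $O$-markings $O_i,O_j$, placed symmetrically about the axis and adjacent to it, are exchanged. We identify $\St$ with $\mathbf{S}(\G')$. The distinguished segment $A$ collapses to the point $c$ on the axis. Let $\mathcal{A}$ be the set of states containing $c$. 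Since $c$ is fixed by $\rho$, $\mathcal{A}$ is $\rho$-invariant. Moreover $\rho(i)=j$, so $\rho(V_i)=V_j$.

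\textbf{Step 1: the collapsed complexes.} $O_i$ and $O_j$ lie on different components of $L'$ but on the same component of $L$. By Lemma~\ref{lem:quella_roba_su_quozientare_diversamente} (used on the $L'$ side) we may take $c\gcc$ to be collapsed by $V_i-V_j$. This ideal is $\rho$-invariant because $\rho$ swaps $V_i$ and $V_j$. On the $L$ side we use $c\gc/(V_i-V_j)$. In both complexes $V_i$ and $V_j$ induce one multiplication $V$, and $\rho(V)=V$.

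\textbf{Step 2: equivariant saddle maps.} On states we define
\[ \sigma(\xx)=\begin{cases} V\cdot\xx & \xx\in\mathcal{A}\\ \xx & \xx\notin\mathcal{A}\end{cases}, \qquad \mu(\xx)=\begin{cases} \xx & \xx\in\mathcal{A}\\ V\cdot\xx & \xx\notin\mathcal{A}\end{cases}. \]
By \cite[Proposition 8.3.1]{ozsvath2015grid} these are chain maps of bidegree $(-1,0)$ and $(-1,-1)$, and both composites are multiplication by $V$.

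\textbf{Step 3: equivariance and passage to cones.} Because $\mathcal{A}$ is $\rho$-invariant and $\rho(V)=V$, we get $\rho\sigma=\sigma\rho$ and $\rho\mu=\mu\rho$ on the nose. Hence $\sigma$ and $\mu$ induce diagonal maps $\mathrm{diag}(\sigma,\sigma)$ and $\mathrm{diag}(\mu,\mu)$ between the cone of $Id+\rho$ on $c\gc/(V_i-V_j)$ and $c\ccono$. We must check that these maps intertwine the $\mathbf{V}_k$-actions up to homotopy. We choose $k$ with $O_k$ away from the saddle, so that the elementary homotopies in $h_{k\rho(k)}$ avoid the $X$-markings near $c$. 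Then we argue as in Lemma~\ref{lemma:stab_F_h}: on each of $\mathcal{A}$ and $\mathcal{A}^c$ the rectangle counts for $\G$ and $\G'$ agree, since the two grids have identical $\X$, so $\sigma$ and $\mu$ commute with $h_{k\rho(k)}$ exactly or up to homotopy. The composites $\mathrm{diag}(\mu\sigma,\mu\sigma)$ and its reverse equal $\mathrm{diag}(V,V)$. This matrix differs from $\mathbf{V}_i$ only in the lower-left entry $\rho h_{i\rho(i)}$, and that entry is null-homotopic in the quotient: since $O_i$ and $O_j$ are adjacent, $h_{ij}$ can be taken elementary, and there $V_i=V_j$. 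So in homology both composites give multiplication by $U$.

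\textbf{Step 4: identification.} By Remark~\ref{rmk:vero_conoquoz_quozcono} (in the form of Lemma~\ref{lem:conoquoz_quozcono}), the source cone is isomorphic to $c\cono/(\mathbf{V}_i-\mathbf{V}_j)$. By the first case of Lemma~\ref{lem:hom_quoz_tens_W}, its homology is $c\hc(L)\otimes W$. This yields $\sigma$ and $\mu$ with the stated degrees.

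\textbf{Main obstacle.} The delicate point is Step 3. On the $L'$ side $O_i$ and $O_{\rho(i)}$ lie on different components, so $\mathbf{V}_i$ is not a priori defined there via $h_{i\rho(i)}$. One must check that the module structure on the collapsed cone, with its off-diagonal terms, is respected by the diagonal saddle maps. I would first try to show that, after collapsing $V_i=V_j$, the off-diagonal entries of all the relevant $\mathbf{V}_k$ agree with those on the $\G$ side, using that the homotopy path for $h_{k\rho(k)}$ can be chosen to avoid the exchanged $O$-markings.
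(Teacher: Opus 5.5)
Your strategy is the one the paper indicates. The paper states this proposition without proof, remarking only that \cite[Proposition 8.3.1]{ozsvath2015grid} goes through because $\mathcal{A}$ is cut out by the single axis point $c$. Your equivariance step is fine: $\rho$ fixes $c$, after collapsing $V_i=V_j$ it fixes $V$, and so $\sigma,\mu$ commute with $\rho$ on the nose.

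However, your local model is misdescribed, and this matters. In the $2\times 2$ block around $c$, the exchanged markings occupy the two \emph{axis} squares (SW and NE of $c$) in one grid and the off-axis symmetric pair (NW and SE) in the other, so $\rho(i)=j$ holds on one side only. For an empty rectangle $r$ from $\xx$ to $\yy$, the number of exchanged $O$'s in $r$ for the on-axis grid, minus that number for the off-axis grid, is $[c\in\xx]-[c\in\yy]$. Hence the chain map out of the on-axis grid must put $V$ on $\mathcal{A}$, and the one out of the off-axis grid must put $V$ on its complement. Your $\sigma$ is therefore a chain map only when the grid of $L$ is the on-axis one (band core on the axis). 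When the co-core lies on the axis, the two formulas must be swapped; the bidegrees are unchanged. Likewise, Lemma~\ref{lem:conoquoz_quozcono}, on which Step 4 rests, needs $\rho(i)=j$ on the $L$ side. In the on-axis case the lower-left entries of $\mathbf{V}_i,\mathbf{V}_j$ are $\rho h_{ii},\rho h_{jj}$, and under the convention of Remark~\ref{rmk:2homot} these may differ by $\rho\Psi$.

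The main gap is Step 3, and both of its claims fail. First, $O_i$ and $O_j$ are never consecutive: the required $X$ would have to sit in the $2\times2$ block, which contains none. Second, an elementary homotopy whose two variables have been identified is a chain map, but it need not be null-homotopic. On the symmetric $2\times 2$ unknot grid with both $X$'s on the axis, after setting $V_1=V_2$ the cone has zero differential, yet $\mathbf{V}_1(\xx,0)=(V\xx,\yy)\neq(V\xx,0)$, where $\xx$ is the grid state lying on the axis. So when $\rho(i)=j$ on the $L$ side, nothing you wrote controls the term $\begin{pmatrix}0&0\\ \rho h_{ij}&0\end{pmatrix}$ separating $\mu\circ\sigma=\mathrm{diag}(V,V)$ from the $U$-action. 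The third bullet is therefore unproved, and the module-map property is also left open.

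Avoiding the saddle is not always possible: if both axis points of a component lie at $c$, every path from $O_k$ to $O_{\rho(k)}$ passes through $O_i$ or $O_j$. It is also not needed, since by the identity above $\sigma,\mu$ commute on the nose with every elementary homotopy $h_X$. The real issue is that the transported $X$-path need not be an admissible homotopy on the other side. For example, $\sigma$ carries $h_{ij}$ to the sum of all elementary homotopies of one component of $L'$, which is a Sarkar map; this is the $\mathbf{V}$ versus $\widetilde{\mathbf{V}}$ ambiguity of Remark~\ref{rmk:somma_sarkar_map}. The paper does not address this either (compare ``trivially satisfied'' in Proposition~\ref{prop:2saddle_buono}).

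In the on-axis configuration the multiplicative claims can be settled. Take $k\in\{i,j\}$ with $h_{kk}=0$, and use the zero homotopy on the $L'$ side, where $V_i=V_j$ holds on the nose. Then every multiplication involved is diagonal, and $\mu\circ\sigma=\mathbf{V}_k$ exactly. In the other configuration you need a genuinely new argument, or off-diagonal corrections to $\sigma$ and $\mu$.
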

  Maps $\sigma$ and $\mu$ are now used to analyze how a saddle move affects the elements of the equivariant $\tau$-sets of interest.
  The result is analogous to \cite[Theorem 8.3.6]{ozsvath2015grid}.
  \begin{thm}\label{thm:inequalities_tau}
  	Let $L$ and $L'$ be two symmetric links.
  	If $L'$ is obtained from $L$ by a pair of $\rho$-equivariant split moves, then:
  	\begin{align}
  		\tau_0^{min}(L)-2 &\leq \tau_0^{min}(L') \leq \tau_0^{min}(L) \label{ineq_doppia} \\
  		\tau_0^{max}(L) &\leq \tau_0^{max}(L') \leq \tau_0^{max}(L) +2.\label{ineq_doppia2}
  	\end{align}
  	If $L'$ is obtained from $L$ by a pair of $\rho$-equivariant saddle moves, namely one merge and one split:
  	\begin{align}
  		\tau_0^{min}(L) - 1 &\leq \tau_0^{min}(L') \leq \tau_0^{min}(L) +1 \\
  		\tau_0^{max}(L) - 1 &\leq \tau_0^{max}(L') \leq \tau_0^{max}(L) + 1.
  	\end{align}
  	If $L'$ is obtained from $L$ by a split move along the axis, then:
  	\begin{align}
  		\tau_0^{min}(L)- 1
  		 &\leq \tau_0^{min}(L') \leq \tau_0^{min}(L) \\
  		\tau_0^{max}(L) &\leq \tau_0^{max}(L') \leq \tau_0^{max}(L) + 1.
  	\end{align}
  	In each of the three cases, every stated inequality holds for $\tau_1^{min}$ and $\tau_1^{max}$ as well.
  	\begin{proof}
  		In all three cases, the proof follows exactly the steps of \cite[Theorem 8.3.6]{ozsvath2015grid}. 
  	\end{proof}
  \end{thm} 
    
 \subsubsection{Births and deaths}
  This section demonstrates that adding or removing unlinked, unknotted components to a link does not alter the $\tau$-sets.
  As in the previous section, once a map is induced between the cone complexes, the procedure is entirely analogous to that in \cite{ozsvath2015grid}. Therefore, the focus here is on the $\rho$-equivariance of the induced maps on the grid chain complex.
  Recall that, if $K$ is a knot, we call $\mathcal{U}_d(K)$ the disjoint union of $K$ with $d>0$ unknotted components.
  The following result is established.
  \begin{prop}\label{prop:tau_con_unknots}
  	If $L$ is a symmetric link of the form $\mathcal{U}_d(K)$ for some \si knot $K$, then, for any orientation on $L$:
  	\[\tau_0^{min}(L) = \tau_0^{max}(L) = \tau_0(K) \quad \text{and} \quad \tau_1^{min}(L) = \tau_1^{max}(L) = \tau_1(K).\]
  \end{prop}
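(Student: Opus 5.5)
We will follow the strategy of \cite[Proposition 8.2.? / Lemma 8.4.?]{ozsvath2015grid} for the classical statement that adding unknotted unlinked components tensors collapsed grid homology with copies of $W$, adapting it equivariantly so that the identification passes to the cone. The key idea is to realize each unknotted component (or symmetric pair of components) by an \emph{extended} symmetric grid in the sense of Remark~\ref{rmk:extended}. That is, we add to a symmetric grid $\G$ for $K$ either one doubly marked square on the SW to NE axis (for a strongly invertible unknotted component), or a pair of doubly marked squares exchanged by $\rho$ (for a pair of components exchanged by $\rho$). We first check that this is legitimate. By equivariant grid moves (Theorem~\ref{thm:cromwellSI} and its link version), any symmetric grid for $\mathcal{U}_d(K)$ is connected to such an extended grid. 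Moreover, the invariance argument of Theorem~\ref{thm:invariance} applies verbatim to extended grids, because the proof only uses Theorem~\ref{prop:key} and the naturality lemmas. Finally, by Lemma~\ref{lem:quella_roba_su_quozientare_diversamente} we may choose the variables to collapse so that the added $O$'s are identified with a fixed axial $O$ on $K$.

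Next we compute the effect of one doubly marked square on the complex. Classically, adding a doubly marked square in a new row and column splits the states into two classes, according to which of the two corners adjacent to the new square they occupy. Rectangles through the doubly marked square are forbidden, so the collapsed complex is isomorphic to $c\gc \otimes W$, with bigradings $(0,0)$ and $(-1,-1)$. We will verify that $\rho$ respects this splitting:
\begin{itemize}
\item For an axial doubly marked square, $\rho$ fixes the square and fixes its two distinguished corners (NE and SW lie on the axis). Hence $\rho$ acts as $\rho \otimes \mathrm{Id}_W$.
\item For a symmetric pair of doubly marked squares, the corners are permuted compatibly. Here one has to be careful: $\rho$ swaps the two new variables, and these have been collapsed to the same $U$. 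Hence $\rho$ acts as $\rho \otimes \mathrm{Id}_{W\otimes W}$, up to the identification of states.
\end{itemize}
Since $\mathrm{Id}+\rho\otimes\mathrm{Id} = (\mathrm{Id}+\rho)\otimes \mathrm{Id}$, the cone splits as $c\cono(\G)\otimes W^{\otimes d'}$. The module structures $\mathbf{V}_i$ also agree, provided the homotopies $h_{i\rho(i)}$ avoid the new markings. This holds after choosing the index $i$ on $K$ away from the extra squares, as in the proof of Theorem~\ref{thm:invariance}.

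Finally, $c\hc(L)\cong \hc(K)\otimes W^{\otimes d'}$ as bigraded $\F[U]$-modules. Tensoring with $W$ creates copies of each tower shifted by $(-1,-1)$. Therefore the minimal value of $-A$ on the towers is unchanged, since the $(0,0)$ copy attains $\tau_i(K)$, and the $(-1,-1)$ copies only raise $-A$. The $0$-tower versus $1$-tower labelling is preserved because $i_*$ is compatible with the tensor decomposition. This gives $\tau_i^{\min}(L)=\tau_i(K)$. For $\tau_i^{\max}$ one argues as in \cite{ozsvath2015grid}: with the collapsed convention and the chosen orientation, the grading shift on the extra generator is exactly absorbed by the collapsing, so the maximal value also equals $\tau_i(K)$. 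One then checks that reversing the orientation of an unknotted component does not change this.

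The main obstacle is the equivariance in the exchanged-pair case and the orientation independence. The relevant point is that $\rho$ reverses the orientation of the components and permutes the two new $V$ variables. We will first try to handle this by collapsing both new variables simultaneously with an axial variable, using Lemma~\ref{lem:quella_roba_su_quozientare_diversamente}, which makes $\rho$ strictly $\F[U]$-linear on the collapsed complex.
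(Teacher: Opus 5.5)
Your overall strategy is the paper's: realize the extra components by doubly marked squares in an extended symmetric grid, identify the collapsed complex with copies of the one for $K$, and check that this identification is compatible with $\rho$. Two steps fail as written.

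The first gap is the grading of the extra summand. An unlinked unknotted component does not contribute a factor $W=\F_{(0,0)}\oplus\F_{(-1,-1)}$. That module arises when two $O$'s on the \emph{same} component are identified, as in Lemma~\ref{lem:hom_quoz_tens_W}. In the collapsed theory the new copy is shifted in Maslov grading only, by $-1$; this is the $\llbracket 1,0\rrbracket$ in the target of the map $\phi_c$ of \cite[Lemma 8.4.7]{ozsvath2015grid} that the paper uses. You can check this on the $2\times 2$ extended grid with two doubly marked diagonal squares (the two-component unlink): its two states have bigradings $(0,0)$ and $(-1,0)$, and the differential vanishes. With your $W$, the shifted copy of the $i$-tower has a generator, not a $U$-multiple modulo torsion, in Alexander grading $-\tau_i(K)-1$. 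Your own argument then gives $\tau_i^{max}(L)\geq\tau_i(K)+1$, contradicting the statement. Saying the shift is absorbed by the collapsing does not repair this, since you are already working in the collapsed complex. With the correct, purely Maslov shift, every tower of $c\hc(L)$ sits in the Alexander grading of a tower of $\hc(K)$ of the same type, and both equalities are immediate.

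The second gap is the equivariance. There is no chain-level isomorphism $cGC^-(\G')\cong cGC^-(\G)\otimes W$: the grid $\G'$ has $(n+1)!$ states against $2\cdot n!$ generators, and a state of $\G'$ need not occupy a corner of the new square. So the claim that $\rho$ acts as $\rho\otimes\mathrm{Id}$ cannot be read off. What exists is the quasi-isomorphism $\phi_c=\mathrm{diag}(e_c,e\circ\mathcal{H}_1)$ on $\mathrm{Cone}(\partial^{\mathbf N}_{\mathbf I})$. The real content of the proof is that this map commutes with $\rho$ up to homotopy, so that Lemma~\ref{lem:induced_map} yields a quasi-isomorphism of cones. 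For an exchanged pair the map is $\phi_{\rho(c)}\circ\phi_c$, and $\rho\,\phi_{\rho(c)}\phi_c\,\rho=\phi_c\phi_{\rho(c)}$, so you must show the two orders of composition agree. The paper splits the states according to whether they contain $c$, $\rho(c)$, both, or neither. On states containing neither, it compares $\mathcal{H}_{\rho(c)}\mathcal{H}_c$ with $\mathcal{H}_c\mathcal{H}_{\rho(c)}$: disjoint rectangles commute, and when $\xx$ and $\zz$ differ in three points the composite domain has two cuts, giving one decomposition for each order. Collapsing the two new variables to a single $U$ does not make this commutation automatic.
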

  This result is a direct consequence of the following lemma, which describes how the collapsed cone homology changes when an unlinked unknotted component is added.
  \begin{lemma}\label{lem:hc_con_unknot}
  	Let $L$ be a symmetric link. Let $L'=\mathcal{U}_1(L)$ be a symmetric link obtained by adding a $\rho$-equivariant unlinked unknotted component along the axis, and let $L''=\mathcal{U}_2(L)$ be a symmetric link obtained by adding a $\rho$-equivariant pair of unlinked unknotted components. Then, there are isomorphism of bigraded $\F[U]$-modules:
  	\[c\hc(L') = c\hc(L) \oplus c\hc(L)\llbracket0,1\rrbracket,\]
  	and
  	\[c\hc(L'') = c\hc(L) \oplus c\hc(L)\llbracket0,1\rrbracket \oplus c\hc(L)\llbracket0,1\rrbracket \oplus c\hc(L)\llbracket0,2\rrbracket.\]
  \end{lemma}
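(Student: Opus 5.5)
The plan is to mimic the proof of \cite[Lemma 8.4.2]{ozsvath2015grid}, where adding an unknotted unlinked component to a grid is realized by adjoining an extra row and column containing a doubly marked square; see Remark~\ref{rmk:extended}. In the equivariant setting I will use extended symmetric grids. For $L'$ the doubly marked square is placed on the SW to NE axis. For $L''$ I place a symmetric pair of doubly marked squares, exchanged by $\rho$. Since collapsed grid homology is unchanged by passing to extended grids (\cite[Section 8.4]{ozsvath2015grid}), and the invariance of $c\hc$ can be run verbatim for extended symmetric grids, it suffices to compute $c\cono$ on these specific grids.

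Let $\G$ be a symmetric grid for $L$ of size $n$. Let $\G'$ be obtained by inserting a new row and a new column crossing on the axis, with the new double marking $O_{n+1}=X_{n+1}$ at their intersection. The new horizontal and vertical circles $\alpha_{n+1},\beta_{n+1}$ meet at two corners of the doubly marked square, and these corners lie on the axis. The key non-equivariant observation is that of \cite[Lemma 8.4.2]{ozsvath2015grid}: any rectangle with a corner on one of these two points, or crossing the new row or column, either contains the double marking or is not empty. After collapsing $V_{n+1}$ with the variable of another component, the states split as $\St\times\{c_1,c_2\}$. The resulting complex is then $c\gc\otimes$ a two-generator complex with zero differential. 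Up to the identification $V_{n+1}=V_{i_1}$, that second factor contributes the shift $\llbracket 0,1\rrbracket$ on the second summand. In \cite{ozsvath2015grid} this is packaged as $c\gcc\cong c\gc\oplus c\gc\llbracket0,1\rrbracket$.

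Next I check $\rho$-equivariance. Both corners $c_1,c_2$ lie on the axis, so $\rho$ fixes them. Hence the decomposition $\St=\St\times\{c_1\}\sqcup\St\times\{c_2\}$ is $\rho$-invariant. On each piece $\rho$ restricts to the involution of $\G$, and $\rho(V_{n+1})=V_{n+1}$. So the isomorphism intertwines $Id+\rho'$ with $(Id+\rho)\oplus(Id+\rho)$, and taking cones gives a vector space isomorphism
\[ c\text{Cone}_{\G'}(Id+\rho')\cong c\cono\oplus c\cono\llbracket0,1\rrbracket. \]
For $L''$ I iterate with a symmetric pair of new rows and columns. Here $\rho$ exchanges the two new doubly marked squares, so it permutes the four corner choices: it fixes the two "diagonal" combinations and swaps the other two. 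This is the main obstacle. The swapped pair does not split as a direct sum of cones naively. However, the cone of $Id+\rho$ on $C\oplus C$ with $\rho$ acting by the swap composed with the old involution is quasi-isomorphic, via the change of basis $(a,b)\mapsto(a,a+\rho b)$, to $\text{Cone}(Id+\rho)$ shifted suitably. I expect this to reassemble into the two middle $\llbracket0,1\rrbracket$ summands, with the remaining fixed pieces giving $c\hc(L)$ and $c\hc(L)\llbracket0,2\rrbracket$. If this doesn't work directly, I would instead handle the pair sequentially by first adding them as a single equivariant step and comparing gradings via the exact sequence of Remark~\ref{rmk:long}.

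Finally I must verify module structures, namely that the $\mathbf{V}_i$ multiplications correspond. The lower-left entries $\rho h_{i\rho(i)}$ count rectangles through one $X$ at a time. By choosing $i$ on $L$ and using the emptiness argument above, these rectangles never touch the new row or column, so they respect the splitting; this mirrors Lemma~\ref{lemma:stab_F_h}. Passing to homology then gives the bigraded $\F[U]$-isomorphisms claimed.
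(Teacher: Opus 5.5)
Your treatment of $L'$ (and of $L''$, which you obtain by iterating) rests on a chain-level splitting that does not exist.

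\emph{The splitting fails.} If $\G$ has size $n$, the extended grid $\G'$ has $(n+1)!$ grid states, whereas $\St\times\{c_1,c_2\}$ has only $2\cdot n!$ elements. Most states of $\G'$ contain neither axis corner of the doubly marked square. (If the square sits in row and column $r$, these corners are $\alpha_r\cap\beta_r$ and $\alpha_{r+1}\cap\beta_{r+1}$, not two intersections of one pair of circles.) Nor must a rectangle crossing the new row or column contain the double marking: it can cross row $r$ in any other column, where the square is unmarked. So $c\gcc$ is not $c\gc$ tensored with a two-generator complex with zero differential. That holds only up to quasi-isomorphism, and the quasi-isomorphism is where the work lies.

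\emph{What is actually needed.} The paper uses the map $\phi_c$ from \cite[Section 8.4]{ozsvath2015grid}. Choose a corner $c$ of the doubly marked square shared with the square of an adjacent $X$-marking. Split the states into $\mathbf{I}$ (containing $c$) and $\mathbf{N}$, and write the complex as $\text{Cone}(\partial^{\mathbf{N}}_{\mathbf{I}})$. Then map it by $\begin{pmatrix} e_c & 0 \\ 0 & e\circ\mathcal{H}_1\end{pmatrix}$, where $\mathcal{H}_1$ counts empty rectangles through the doubly marked square. One must show this quasi-isomorphism commutes with $\rho$ up to homotopy, so that Lemma~\ref{lem:induced_map} gives a quasi-isomorphism of cones. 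The paper does this for the pair by comparing $\phi_{\rho(c)}\phi_c$ with $\phi_c\phi_{\rho(c)}$ on the classes $\mathbf{I},\mathbf{I}_c,\mathbf{I}_{\rho(c)},\mathbf{N}$, the last via the two cuts of the composite domain.

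Your equivariance check (``$\rho$ fixes $c_1,c_2$'') has no splitting to act on. Your module-structure paragraph fails for the same reason: the rectangles counted by $h_{i\rho(i)}$ can cross the new row and column. For $L'$, run the $\phi_c$ argument with $c$ on the axis in a $\rho$-symmetric local configuration. Then $\mathbf{I}$, $\mathbf{N}$, $e_c$ and $\mathcal{H}_1$ are strictly $\rho$-equivariant, and this is the route to the first isomorphism.

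\emph{The pair case cannot succeed.} Take $\rho'(a,b)=(\rho b,\rho a)$ on $C\oplus C$. The chain isomorphism $(a,b)\mapsto(a,\rho b)$ conjugates $\rho'$ to the plain swap. Your own change of basis $(a,b)\mapsto(a,a+\rho b)$ likewise turns $Id+\rho'$ into $(a,b)\mapsto(b,0)$. The kernel and cokernel of $Id+\text{swap}$ are each a copy of $C$, so the cone is quasi-isomorphic to $C$ plus a Maslov-shifted copy of $C$. The involution of $C$ has disappeared: the swapped part contributes two shifted copies of $cGH^-(L)$, not of $c\hc(L)$. For $L$ a strongly invertible knot these have total rank $2$, while $c\hc(L)\llbracket0,1\rrbracket^{\oplus 2}$ has rank $4$ by Lemma~\ref{lemma:2torri}. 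So neither a reassembly nor your fallback through Remark~\ref{rmk:long} can produce the displayed formula.

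\emph{This is a problem with the second formula itself.} Since $\rho$ exchanges $\mathbf{I}_c$ and $\mathbf{I}_{\rho(c)}$, the paper's homotopy $\phi_{\rho(c)}\phi_c\simeq\phi_c\phi_{\rho(c)}$ shows that $\phi$ intertwines $\rho$ with an involution of the four-summand target that exchanges the two middle summands. The computation above then applies. Concretely, for $L$ the unknot, $\rho_*$ swaps the two towers of $cGH^-(L'')$ lying in the same bigrading. The exact sequence of Remark~\ref{rmk:long} then gives $c\hc(L'')$ of rank $6$, whereas the displayed formula predicts rank $8$. The middle summands should be replaced by the homology of $\text{Cone}(Id+\text{swap})$, i.e.\ two suitably shifted copies of $cGH^-(L)$.
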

  Lemma~\ref{lem:hc_con_unknot} is analogous to \cite[Lemma 8.4.2]{ozsvath2015grid}. As in the section on saddles, the proof consists of establishing the $\rho$-equivariance of the quasi-isomorphism $\Phi$ provided in \cite[Section 8.4]{ozsvath2015grid} at the level of grid chain complexes.
  After this step, the induced the map between cone complexes is a quasi-isomorphism, since $\Phi$ possesses this property.
  
  We now work with the extended grid diagrams that we mentioned in Remark~\ref{rmk:extended}. In particular, the unknotted components we add or remove will be represented by a single square marked with both an $O$- and an $X$-marking. 
  The grid complex computed on an extended grid is related by a quasi-isomorphism $\psi$ to the complex computed on a grid containing a $2\times 2$ subgrid representing an unknot. This is a modification of the destabilization induced morphism, and the isomorphism at the homology level preserves the $\tau$-sets (see \cite[Lemma 8.4.6]{ozsvath2015grid}). Since the unknots are $\rho$-equivariant, $\psi$ is also $\rho$-equivariant.
  The latter statement is proved using a procedure analogous to that described below for the map induced by the equivariant removal of unknotted components.
  
  \begin{figure}
  	\centering
  	\begin{tikzpicture}
  		\draw (0,0) node[above right]{\includegraphics[width=0.5\linewidth]{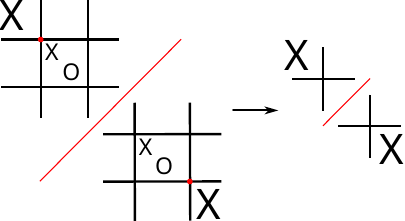}};
  		\draw (-0.15,1.65) node (a) [scale=1.1] {$\G''$};
        \draw (7,1.65) node (a) [scale=1.1] {$\G$};
        \draw (0.93,3.2) node (a) [scale=0.75] {\textcolor{red}{$c$}};
		\draw (3.53,0.97) node (a) [scale=0.75] {\textcolor{red}{$\rho(c)$}};
		\draw (1.14,2.73) node (a) [scale=0.5] {$1$};
		\draw (1.47,2.4) node (a) [scale=0.5] {$1$};
		\draw (2.76,1.24) node (a) [scale=0.45] {$\rho(1)$};
		\draw (3.01,0.88) node (a) [scale=0.45] {$\rho(1)$};
		\draw (0.63,3.2) node (a) [scale=0.75] {$2$};
		\draw (3.98,0.15) node (a) [scale=0.75] {$\rho(2)$};
		\draw (5.21,2.59) node (a) [scale=0.75] {$2$};
		\draw (6.92,1) node (a) [scale=0.75] {$\rho(2)$};
  	\end{tikzpicture}
  	\caption{Removal of a $\rho$-equivariant pair of unknotted components.}
  	\label{fig:birth/death}
  \end{figure}
  Let $\G$ and $\G''$ be two symmetric grids representing $L$ and $L''$. Up to stabilizations, we can assume that the $\rho$-equivariant pair of unknotted components are located close to an $X$-marking each, as in Figure~\ref{fig:birth/death}.
  Call $c$ the intersection point lying on the horizontal and vertical circles that separate one of the doubly marked squares from the $X$-marking that we fixed next to it. We call $\phi_c$ the quasi-isomorphism described in the proof of \cite[Lemma 8.4.7]{ozsvath2015grid}. 
  Recall that we can split $\mathbf{S}(\G'')$ as $\mathbf{I}(\G'') \sqcup \mathbf{N}(\G'')$, where the former is the subset of states containing the intersection point $c$, and the latter is the complementary. 
  Then we can consider submodules $GC^-(\G'')= \mathbf{I} \sqcup \mathbf{N}$, the latter being a subcomplex. It follows that $GC^-(\G'')= \text{Cone}(\partial_\mathbf{I}^\mathbf{N}:\mathbf{I} \rightarrow \mathbf{N})$, where $\partial_\mathbf{I}^\mathbf{N}$ is the component of the differential going from $\mathbf{I}$ to $\mathbf{N}$.
  Call $\G_1$ the grid found by removing from $\G''$ the component relative to $c$.
  Then the map
  \[ \phi_c : \text{Cone}(\partial_\mathbf{I}^\mathbf{N}) \rightarrow GC^-(\G_1)[V_1] \oplus GC^-(\G_1)[V_1] \llbracket1,0\rrbracket  \]
  is defined as:
  \[ \phi_c = \begin{pmatrix}
  	e_c & 0 \\ 0 & e\circ\mathcal{H}_1
  \end{pmatrix}. \]
  The map $e_c$ is an isomorphism induced by the identification of the states in $\mathbf{I}$ with the ones in $\St$, by cancellation of $c$.
  The chain map $\mathcal{H}_1: \mathbf{N} \rightarrow \mathbf{I} $ is defined as:
  \[ \mathcal{H}_1(\xx) = \sum_{y \in \mathbf{I}(\G'')} \sum_{\{ r \in \text{Rect}^\circ(\xx,\yy) \;|\; O_1 \in r,\, r\cap \X = \{X_1\} \}} V_2^{O_2(r)}, \dots, V_n^{O_n(r)} \yy. \]
  
  The map induced by the $\rho$-equivariant removal of unknotted components will then be $\phi = \phi_{\rho(c)}\circ \phi_c$.
  Note that: $$\rho \circ \phi_{\rho(c)}\circ \phi_c \circ \rho = \phi_c \circ \phi_{\rho(c)}.$$
  Then, showing the $\rho$-equivariance of $\phi$ is equivalent to showing that $\phi_{\rho(c)}\circ \phi_c$ is chain homotopic to $\phi_{\rho(c)}\circ \phi_c$.
   
  We can write $\mathbf{S}(\G'')$ as a new disjoint union:
  \[ \mathbf{S}(\G'') = \mathbf{I}(\G'') \sqcup \mathbf{I}_c(\G'') \sqcup \mathbf{I}_{\rho(c)}(\G'') \sqcup \mathbf{N}(\G''), \]
  where:
  \begin{itemize}
  	\item $\mathbf{I}(\G'') \subseteq \mathbf{S}(\G'')$ is the subset of the states that contain both the intersection points $c$ and $\rho(c)$;
  	\item $\mathbf{I}_c(\G'') \subseteq \mathbf{S}(\G'')$ is the subset of the states that contain $c$ but not $\rho(c)$;
  	\item $\mathbf{I}_{\rho(c)}(\G'') \subseteq \mathbf{S}(\G'')$ is the subset of the states that contain $\rho(c)$ but not $c$;
  	\item $\mathbf{N}(\G'') \subseteq \mathbf{S}(\G'')$ is the subset of the states that do not contain $c$ nor $\rho(c)$.
  \end{itemize}
  It is enough to prove the thesis for the states. 
  If $\xx \in \mathcal{I}(\G'')$ then:
  \[ \phi_{\rho(c)}\circ \phi_c (\xx) = \phi_c\circ \phi_{\rho(c)} (\xx) = e_{\rho(c)}e_c\xx, \]
  that is, the state of $\G$ that we find removing $c$ and $\rho(c)$ from $\xx$.
  When no confusion arises, the identifications $e_c$ and $e_{\rho(c)}$ are omitted.
  If $\xx \in \mathcal{I}_c(\G'')$, $\phi_c(\xx)=\mathcal{H}_{\rho(1)}(\xx)$ and $\phi_{\rho(c)}(\xx)= e_c \circ \mathcal{H}_{\rho(1)}(\xx)$. We can say that these two coincide, if every state $\yy$ that appears as an addend in $\mathcal{H}_{\rho(1)}(\xx)$ contains the intersection point $c$. 
  If not, it would follow that $c$ is a vertex of a rectangle going from $\xx$ to $\yy$. This is absurd; in fact, such a rectangle is not admitted by $\mathcal{H}_{\rho(1)}$, as it either contains $X_1$ or $X_2$.
  The case $\xx \in \mathcal{I}_{\rho(c)}(\G'')$ is symmetric to the previous one.
  The case $\xx \in \mathcal{N}(\G'')$ is the most delicate. First, observe that:
  \[ \phi_{\rho(c)}\circ \phi_c (\xx) = \mathcal{H}_{\rho(c)}\circ \mathcal{H}_c (\xx) \quad\text{and}\quad \phi_c \circ \phi_{\rho(c)} (\xx) = \mathcal{H}_c \circ \mathcal{H}_{\rho(c)} (\xx). \]
  In fact, one can prove that $\mathcal{H}_c(\xx)$ does not contain $\rho(c)$ by absurd, analogously to what we previously did. 
  Every addend $\zz \in \gc$ in $\mathcal{H}_{\rho(c)}\circ \mathcal{H}_c(\xx)$ or $\mathcal{H}_c \circ \mathcal{H}_{\rho(c)} (\xx)$ either differs from $\xx$ in $4$ or in $3$ intersection points.
  If $|\xx \cap (\xx -\zz)| = 4$, $\mathcal{H}_{\rho(c)}\circ \mathcal{H}_c$ and $\mathcal{H}_c \circ \mathcal{H}_{\rho(c)}$ count the same two disjoint rectangles in switched order, hence the maps commute.
    \begin{figure}
  	\centering
  	\begin{tikzpicture}
  		\draw (0,0) node[above right]{\includegraphics[width=0.35\linewidth]{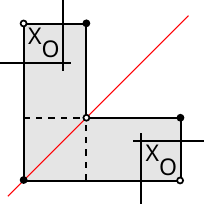}};
  		\draw (0.4,4.18) node (a) [scale=1] {$c$};
  		\draw (4.68,0.72) node (a) [scale=1] {$\rho(c)$};
  	\end{tikzpicture}
  	\caption{Example of the case: $|\xx \cap (\xx -\zz)| = 3$. The points of $\xx$ are represented by black dots, while we use white dots for $\zz$. The dashed lines show the two possible cuts.}
  	\label{fig:frame_birth_death}
  \end{figure}
  When $|\xx \cap (\xx -\zz)| = 3$, the two rectangles merge into one only domain. There are two ways to cut this domain to obtain two squares; see the example in Figure~\ref{fig:frame_birth_death}. This two different cuts provide one decomposition as an empty rectangle containing $X_1$ joined to an empty rectangle containing $X_{\rho(1)}$ (this is counted by $\mathcal{H}_{\rho(c)}\circ \mathcal{H}_c$) and one decomposition as an empty rectangle containing $X_{\rho(1)}$ joined to an empty rectangle containing $X_1$ (this is counted by $\mathcal{H}_c\circ \mathcal{H}_{\rho(c)}$). 
  We proved that the two maps are chain-homotopic. Hence, we can induce a quasi-isomorphism between the cone complexes associated with $\G''$ and $\G$. 
    
  The case of a symmetric unknotted component added or removed along the axis is analogous. Alternatively, it is not necessary to address this case, since any birth or death along the symmetry axis can be replaced by a saddle lying on the axis and a $\rho$-equivariant pair of births or deaths.
  
  \subsubsection{Proof of the bound}\label{sec:proof_genus_bound}
   The proof follows exactly as in \cite[Section 8.5]{ozsvath2015grid}. These final steps are included here for completeness.  
   \begin{prop}\label{prop:8.5.1}
   	Let $i=0,1$. Suppose that $K_1$ and $K_2$ are two \si knots so that $K_2$ is obtained from $K_1$ by a sequence of $2g$ equivariant saddle moves. Then $|\tau_i(K_1) - \tau_i(K_2)| \leq g$.
   	\begin{proof}
   		Let $L_1$ and $L_2$ be two symmetric links connected by a sequence of $m$ merge moves and $s$ split moves. Separating these moves in the three cases of Theorem~\ref{thm:inequalities_tau} and iterating the inequalities, we find that $\tau_i^{max}(L_1) \leq \tau_i^{max}(L_2) + m $ and $\tau_i^{min}(L_2) - m \leq \tau_i^{min}(L_1) $.
   		We are starting and ending with knots, so $ s = g = m $ and $\tau_i^{max}(K_j)=\tau_i^{min}(K_j)=\tau_i(K_j)$ for $j=1,2$; hence the above inequalities imply the thesis.
   	\end{proof}
   \end{prop}
   Recall from \cite{boyle2024equivariant} the definitions of equivariant unknotting numbers. Adopting the notations given by the authors Boyle and Chen, we get the following.
   \begin{thm}\label{rmk:unknot_bound}
   	Let $K$ be a \si knot.
   	Assume to unknot $K$ performing only \emph{Type A} crossing changes, then $|\tau_i(K)| \leq 2\widetilde{u}_A(K)$, $i=0,1$. Performing only \emph{Type B} crossing changes, one gets $|\tau_i(K)| \leq \widetilde{u}_B(K)$, $i=0,1$. 
   	In general: $|\tau_i(K)| \leq 2\widetilde{u}(K)$, $i=0,1$.
   \begin{proof}
   	 Since a crossing change can be decomposed into two saddle moves, the thesis is a straightforward consequence of Proposition~\ref{prop:8.5.1}.
   \end{proof}
   \end{thm}
   \begin{prop}\label{prop:8.5.3}
   	 Let $i=0,1$. Suppose that $K_1$ and $K_2$ are two \si knots, and $K_2$ is obtained by a \si $\mathcal{U}_d(K)$ by exactly $d$ $\rho$-equivariant saddle moves. 
   	 Then $\tau_i(K_1)=\tau_i(K_2)$  
   	 \begin{proof}
   	  	Now we can prove the actual statement.
   	 	Taking the saddle moves one at a time (here we mean that one saddle move is either a saddle along the axis or a $\rho$-equivariant pair of saddles), we obtain a sequence of symmetric links $L_1=\mathcal{U}_d(K_1), L_2, \dots, L_d,L_{d+1}=K_2$.
   	 	We prove that $\tau_i^{max}(L_k)=\tau_i^{min}(L_k)=\tau_i(K_1)$ by induction on $k$, where the case $k=d+1$ is the thesis. Proposition~\ref{prop:tau_con_unknots} provides the case $k=1$; we now prove the inductive step.
   	 	
   	 	Suppose that $L'$ is a symmetric link with the property that $\tau_i^{max}(L')=\tau_i^{min}(L')$; and suppose that $L$ is obtained by $L'$ by a merge move along the axis, or by a $\rho$-equivariant pair os merge moves. Then $\tau_i^{max}(L) = \tau_i^{min}(L) = \tau_i^{max}(L')$.
   	 	The case of a merge move along the axis is exactly \cite[Lemma 8.5.4]{ozsvath2015grid}. We adapt the proof of the same lemma to the case of a $\rho$-equivariant pair of moves.
   	 	Tautologically, $\tau_i^{max}(L) \geq \tau_i^{min}(L)$. Combine this with Inequalities~\ref{ineq_doppia} and~\ref{ineq_doppia2} from Theorem~\ref{thm:inequalities_tau} to get $\tau_i^{min}(L') \leq \tau_i^{min}(L) \leq \tau_i^{max}(L) \leq \tau_i^{max}(L')$.
   	\end{proof}
   \end{prop}
   Now everything is settled for the proof of the bound.
   \begin{proof}[Proof of Theorem~\ref{thm:genus_bound}.]
   	Let $i=0,1$. Fix a genus $g$ simple equivariant cobordism from $K_1$ to $K_2$. Let $K'_1$ and $K'_2$ be as in the statement of the equivariant normal form from Theorem~\ref{thm:normal_form}. By Proposition~\ref{prop:8.5.3}, $\tau_i(K_1)=\tau_i(K'_1)$ and $\tau_i(K_2)=\tau_i(K'_2)$. By Proposition~\ref{prop:8.5.1} $|\tau_i(K'_1)- \tau_i(k'_2)|\leq g$, this concludes.
   \end{proof}
    
 \subsection{The induced morphism}\label{sec:finale}
  This final section is dedicated to the discussion of the following conjecture.
  \begin{conj}\label{congettura}
  	Let $K_1$ and $K_2$ be two \si knots connected by a genus $g$ simple equivariant cobordism.
  	Then there exists an $\F[U]$-linear $(-2g,-g)$-homogeneous homomorphism:
  	\[ \phi: \hc(K_1) \rightarrow \hc(K_2). \]
  	Moreover, $\phi$ is induced by a chain map and its image is not contained in the torsion submodule.
  \end{conj}
  In addition to its intrinsic interest, the existence of such a map would directly imply Theorem~\ref{thm:genus_bound} as a corollary.

  In many knot homology theories, maps induced by cobordisms have been constructed by defining explicit chain-level maps for elementary moves. In Khovanov homology, Bar-Natan \cite{BarNatan2005} associated chain maps to births, deaths, and saddles, yielding functoriality up to sign. In knot Floer homology, cobordism maps were developed in the sutured and decorated settings by Juhász \cite{Juhasz2008} and Juhász–Marengon \cite{JuhaszMarengon2018}, and were subsequently placed in a fully functorial framework by Zemke \cite{Zemke2019}.
  
  In grid homology, explicit chain maps corresponding to births, deaths, and saddles in the $\widehat{GH}$ setting are introduced by Sarkar \cite{Sarkar2011}, who models cobordisms via sequences of diagrammatic grid moves and defines the associated chain maps. Later constructions of cobordism maps in the grid setting build on this diagrammatic framework; for instance, Graham \cite{GrahamThesis} used the Sarkar construction to partially define the maps in the $GH^-$ setting. More recently, Baldwin–Lidman–Wong \cite{baldwin2022lagrangian} provided the cobordism induced maps in the Legendrian and Lagrangian setting for the $\widetilde{GH}$ version. 
 
  The following discussion adopts the constructions in \cite[Section 3]{baldwin2022lagrangian} to attempt the definition of suitable maps at the level of $GC^-$. 
  To satisfy the conjecture, it is necessary to construct, for any elementary equivariant cobordism, a chain map $\phi: \gc \rightarrow \gc$ that is $\rho$-equivariant and homogeneous of the prescribed bidegree. Furthermore, it is essential to control the image of a cycle $\xx \in \gc$ such that the homology class of its image $[\phi(\xx)]$ is of non-torsion. More specifically, it is required to track such a cycle through the composition of all elementary cobordism maps. This approach ensures that the image of the final map is not contained in the torsion part.
  
  Note that in \cite{baldwin2022lagrangian} the authors define contravariant morphisms, whereas we consider covariant maps. Accordingly, births correspond to deaths and vice versa.
  Results are proposed for the removal of a \si unknotted component, the removal of a $\rho$-equivariant pair of unknotted components, and the case of a pair of $\rho$-equivariant saddle moves. After Lemma~\ref{label:1} and Lemma~\ref{lemma:due_selle}, the remaining cases will be discussed. 
  
  \begin{lemma}\label{label:1}
 	Let $L$ and $L'$ be two symmetric links and assume that $L'$ is obtained from $L$ by an equivariant death move. For any pair of grids $\G$ and $\G'$ respectively representing $L$ and $L'$, there exists a homomorphism:
 	\[ \phi: \gcc \rightarrow \gc \]
 	that sends $\lambda^{\pm}(\G')$ to $\lambda^{\pm}(\G)$. Furthermore, this map is $\rho$-equivariant and has Maslov-Alexander bigrading either $(1,0)$ or $(2,0)$. The bidegrees are, respectively, those of a death move on the axis and of a pair of symmetric death moves.  
 	\begin{proof}
 		We must prove the thesis in the case of a single unknotted circle appearing on the axis and in the case of a pair of equivariant unknots.
 		In both cases, the map will be analogous, for the unblocked case, to the map defined in \cite[Lemma 3.9]{baldwin2022lagrangian}.
 		Since isotopies induce homotopic equivalence between grid complexes (see \cite{manolescuoz} or \cite[Chapter 5]{ozsvath2015grid}), it is enough to prove the thesis for any pair $\G$ and $\G'$ of symmetric grids representing $L$ and $L'$.
 		There are two cases:
 		\begin{enumerate}
 			\item death of an unknotted component along the axis;
 			\item death of a pair of unknotted components swapped by $\rho$.
 		\end{enumerate} 
 		\begin{figure}
 			\centering
 			\includegraphics[width=0.3\linewidth]{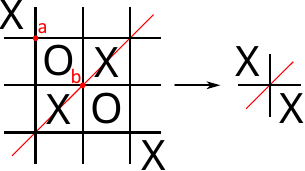}
 			\caption{Death of an equivariant component along the symmetry axis.}
 			\label{fig:axis_birth}
 		\end{figure}
 		\emph{Case i.}  Up to $S$-stabilization, we can state that $\G$ has a pair of $\rho$-symmetric $X$-markings close the axis: $X_1$ and $X_{\rho(1)}$. Assume to obtain $\G'$ adding, between $X_1$ and $X_{\rho(1)}$, a $2\times 2$ square as in Figure~\ref{fig:axis_birth}.
 		Call $a$ and $b$ the intersection points (refer to Figure~\ref{fig:axis_birth}) obtained from the new vertical and horizontal circles. Observe that we can split $\mathbf{S}(\G')$ as a disjoint union:
 		\[ \mathbf{S}(\G') = AB\sqcup NB\sqcup AN\sqcup NN, \]
 		where:
 		\begin{itemize}
 			\item $ AB= \{\xx \in \mathbf{S}(\G') \;|\; \{ a,b \}\subseteq \xx \};$
 			\item $ AN= \{\xx \in \mathbf{S}(\G') \;|\; \{ a,b \}\cap \xx = \{a \}\};$
 			\item $ NB= \{\xx \in \mathbf{S}(\G') \;|\; \{ a,b \}\cap \xx = \{b\}\};$
 			\item $ NN= \{\xx \in \mathbf{S}(\G') \;|\; \{ a,b \}\cap \xx = \emptyset \}.$
 		\end{itemize} 
 		Consequently, there is an induced decomposition:
 		\[ \gcc = AB^-(\G') \oplus AN^-(\G') \oplus NB^-(\G') \oplus NN^-(\G'), \]
 		the summmands being the $\F[V_1,\dots,V_n]$-module freely generated on the corresponding set of states.
 		Note that $\gcc$ is a $\F[V_1,\dots,V_n]$-module, while $\gc$ is a $\F[V_3,\dots,V_n]$-module. Hence we quotient $\gcc$ by the submodule $(V_1,V_2)\gcc$. This quotient preserves the splitting, which we rename for convenience:
 		\[ \frac{\gcc}{(V_1,V_2)\gcc} = AB(\G') \oplus AN(\G') \oplus NB(\G') \oplus NN(\G'). \]
 		There is a sequence $NB(\G')\subseteq AB(\G')\oplus NB(\G') \subseteq \gcc$ of subcomplexes. This follows from the fact that any rectangle that starts in $b$ or ends in $a$ must contain an $X$-marking, either $X_1$ or one of the two new markings. Crossing an $X$-marking is an obstruction to appearing in the differential sum. 
 		Let $AB(\G')$ be the quotient of $ AB(\G')\oplus NB(\G')$ by $NB(\G')$. There is an isomorphism:
 		\[ e: AB(\G') \rightarrow \gc, \]
 		that is, the $\F[V_1,\dots,V_n]$-linear extension of the map defined on states that sends $\xx\in AB$ in $\xx\setminus\{a,b\} \in \St$.
 		The extension is an isomorphism because, for all $\xx,\yy \in AB$ there is a natural bijection:
 		\[ \text{Rect}^{\circ}_{\G'}(\xx,\yy) \rightarrow \text{Rect}^{\circ}_\G(\xx\setminus\{a,b\},\yy\setminus\{a,b\}), \]
 		which preserves the multiplicity of the $O$-markings and the condition to avoid the $X$-markings. Moreover, a quick computation shows that $e$ is bigraded, namely $(0,0)$- homogeneous.
 		
 		Given two states $\xx\in NB$ and $\yy \in AB$, call
 		\[ \text{Rect}_{AB}(\xx,\yy)\subset \text{Rect}_{\G'}(\xx,\yy) \]
 		the set of rectangles $p$ such that:
 		\begin{itemize}
 			\item $p\cap(\Oo\cup\{O_1,O_2\})\supseteq\{O_1,O_2\},$
 			\item $p\cap(\X\cup\{X_1,X_2\})=\{X_1,X_2\},$
 			\item $\text{Int}(p)\cap \xx=\text{Int}(p)\cap \yy = \{b\}.$
 		\end{itemize}
 		Define $\psi:NB(\G')\rightarrow AB(\G')$ as the linear extension of the map defined on the states as:
 		\[ \psi(\xx)= \sum_{\yy\in AB} \sum_{p\in \text{Rect}_{AB}(\xx,\yy)}V_3^{O_3(p)}\cdots V_n^{O_n(p)} \yy. \]
 		Define
 		\[ \phi: \gcc \rightarrow \gc \]
 		to be the $\F[V_1,\dots,V_n]$-linear map given as the composition
 		\[ \phi = e\circ \psi\circ \pi_{NB}, \]
 		where $\pi_{NB}:\gcc \rightarrow NB(\G')$ is the projection onto the summand $NB^-(\G')$ composed with the projection to the quotient $NB(\G')$.
 		
 		To begin with, we must prove that $\phi$ is a chain map. The map $e$ is clearly a chain map, so we must prove:
 		\[ \partial^-_{AB} \circ (\psi\circ\pi_{NB}) = (\psi\circ\pi_{NB}) \circ \partial^-_{\G'}.  \]
 		The left-hand side vanishes on any generator $\xx \notin NB(\G')$. The right-hand side, in particular $\pi_{NB}\circ \partial^-_{\G'}$, vanishes for generators $\xx \notin AB(\G')\cup NB(\G')$. In fact, any generator $\xx \notin AB(\G')\cup NB(\G')$ whose image $\partial^-_{\G'}(\xx)$ has non trivial component in $NB^-(\G')$, is sent to zero going to the quotient $NB(\G')$, as each coefficient contains either $V_1$ or $V_2$. The proof proceeds as in \cite[Lemma 3.10]{baldwin2022lagrangian} (Note that it refers to the computations for the vanishing of the square of the differential in \cite{manolescuoz}, or \cite[Chapter 4]{ozsvath2015grid}). 
 		
 		The conditions on the canonical cycles and the bidegree of the map are proved exactly as in \cite[Lemma 3.11, Lemma 3.12]{baldwin2022lagrangian}. It remains to show $\rho$-equivariance, namely that $\phi\circ\rho$ is chain homotopic to $\rho\circ\phi$. Here, $\rho$ acts on $\gcc$ in the first composition, and on $\gc$ in the latter.
 		Equivalently, we are going to show that $\phi$ is chain homotopic to $\rho\circ\phi\circ\rho$.
 		As in the proof of Proposition~\ref{prop:rhochain}, a computation shows that, by symmetry of the set we do the sum on, we have the commutation of the two compositions.
 		More precisely, $\rho(a)=a'$ and $\rho(b)=b$. We obtain a subset $A'B\subseteq \Stt$ of the states containing $\{a',b\}$, such that $\rho(AB)=A'B$. The same goes defining a subset $N'B\subseteq \Stt$
 		
 		Let us write explicitly the two compositions, on a state $\xx \in \Stt$:
 		\[\phi(\xx)=\sum_{\yy\in AB}\sum_{r\in \text{Rect}_{AB}(\xx,\yy)} V_{\rho(3)}^{O_3(r)}\cdots V_{\rho(n)}^{O_n(r)}\cdot (\yy\setminus \{a,b\})\] 
 		when $\xx\in NB$, and vanishes elsewhere, while
 		\[\rho\circ\phi\circ\rho(\xx)=\sum_{\yy\in A'B}\sum_{r\in \text{Rect}_{A'B}(\xx,\yy)} V_{\rho(3)}^{O_3(r)}\cdots V_{\rho(n)}^{O_n(r)}\cdot (\yy\setminus \{a',b\})
 		\]
 		when $\xx\in N'B$, and vanishes elsewhere.
 		It follows that, for $\xx \in \Stt \setminus (NB \cup N'B)$, the homomorphisms coincide, as they are both trivial. Note that $\text{Rect}_{AB}(\xx,\yy) = \emptyset$ if $\xx$ contains the intersection point $a'$. This is due to the condition that a rectangle in $\text{Rect}_{AB}$ must contain the $2 \times 2$ square representing the unknot. So $\phi$ vanishes on $A'B$. The same goes for $\rho\circ \phi \circ\rho$, which vanishes on $AB$.
 		
 		It follows that the only non-trivial case is $\xx \in NB \cup N'B$.
 		Here, the assumption of the $X$-markings located NW and SE of the $2 \times 2$ square imply that, for $\text{Rect}_{AB}(\xx,\yy)$ to be non-empty for some $\yy$, $\xx$ must intersect at least one between the SW and the NE vertex of the $2 \times 2$ square.
 		In this condition, for any $\yy \in AB$ such that $\text{Rect}_{AB}(\xx,\yy) \neq \emptyset$, we have that the rectangle is unique and it intersects no $O$-markings (aside from $O_1$ and $O_2$). Furthermore, for any such state $\yy$ we have that $\text{Rect}_{A'B}(\xx,\rho(\yy))$ consists of one only rectangle $r$ such that $r \cap \Oo = \{O_1,O_2\}$ and $\yy \setminus \{a,b\}$ coincides with $\rho(\yy) \setminus \{a',b\}$ as a element of $\St$. This concludes the $\rho$-equivariance of the map $\phi$.
 		
 		\begin{figure}
 			\centering
 			\includegraphics[width=0.45\linewidth]{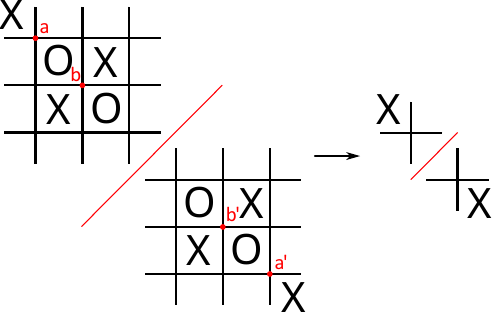}
 			\caption{Death of a pair of $\rho$-equivariant unknotted components.}
 			\label{fig:birth_equiv}
 		\end{figure}
 		\emph{Case ii.} In this case we can assume to perform a move as in Figure~\ref{fig:birth_equiv}. Here we are assuming that in $\G$ we can find a pair of symmetric $X$-markings, that is a valid assumption up to equivariant stabilization. 
 		The constructions are analogous to the ones in the previous case. The distinct intersection points here are four: $a$, $b$, $a'=\rho(a)$ and $b'=\rho(b)$.
 		We split the again the states as $\Stt = AB\sqcup NB \sqcup AN \sqcup NN$, but the definitions are slightly different:
 		\begin{itemize}
 			\item $ AB= \{\xx \in \mathbf{S}(\G') \;|\; \{ a,b,a',b' \}\subseteq \xx \};$
 			\item $ AN= \{\xx \in \mathbf{S}(\G') \;|\; \{ a,a' \}\subseteq \xx ,\; |\xx\cap\{b,b'\}|<2\};$
 			\item $ NB= \{\xx \in \mathbf{S}(\G') \;|\; \{ b,b' \}\subseteq \xx ,\; |\xx\cap\{a,a'\}|<2\};$
 			\item $ NN= \Stt \setminus \{ AB\sqcup NB \sqcup AN \}.$
 		\end{itemize} 
 		The sequence of subcomplexes is the same as before, as the observations on the isomorphism $e$.
 		Of course, this time we have two subsets of $\text{Rect}_{\G'}(\xx,\yy)$: $\text{Rect}_{AB}(\xx,\yy)$ and $\text{Rect}_{A'B'}(\xx,\yy)$. These two subsets are in bijection through $\rho$, and the bijection is such that $O_i(\rho(r))=O_{\rho(i)}(r)$ for any $r\in \text{Rect}_{AB}(\xx,\yy)$. 
 		Referring to this two subset, there are natural definitions of $\psi$ and $\psi'$ as in the previous case. 
 		We define the map $\phi:\gcc \rightarrow \gc$  as:
 		\[ \phi := e \circ \psi' \circ \psi \circ \pi_{NB}. \]
 		Checking that this map is a $(2,0)$-homogeneous chain map that sends the canonical cicles of $\G'$ into the ones of $\G$ is analogous to the previous case. The same goes for the $\rho$ equivariance of $e$ and $\pi_{NB}$. To show that $\rho\circ\psi\circ\psi' \sim \psi\circ\psi'\circ\rho$, it is enough to observe that:
 		\[ \rho\circ\psi\circ\rho = \psi' \]
 		through the same steps of the previous case.
 	\end{proof}
 \end{lemma}

 \begin{lemma}\label{lemma:due_selle}
 	Let $L$ and $L'$ be two symmetric links and assume that $L'$ is obtained from $L$ by a pair of saddle moves, symmetric with respect to the axis. For any pair of symmetric grids $\G$ and $\G'$ respectively representing $L$ and $L'$, there exists a homomorphism:
 	\[ \phi: \gcc \rightarrow \gc \]
 	that sends $\lambda^{\pm}(\G')$ to $\lambda^{\pm}(\G)$. Furthermore, this map is $\rho$-equivariant and has Maslov-Alexander bigrading:
 	\[\begin{cases}
 		(-2,0)  &\text{if }|L'|=|L|+2\\
 		(-2,-2) &\text{if }|L'|=|L|-2\\
 		(-2,-1) &\text{if }|L'|=|L|,
 	\end{cases}\]
 	where $|L|$ is the number of components of the link $L$.
 	\begin{proof}
 	As for the previous lemmas, it is enough to show the thesis for a specific pair of symmetric grids $\G$ and $\G'$, respectively representing $L$ and $L'$.
 	    \begin{figure}
 			\centering
 			\begin{tikzpicture}
 			    \draw (0,0) node[above right]{\includegraphics[width=0.45\linewidth]{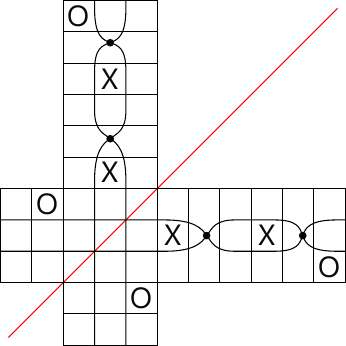}};
 				\draw (2.3,3.65) node (a) [scale=0.75] {$a$};
 				\draw (2.3,5.26) node (a) [scale=0.75] {$b$};
 				\draw (3.62,2.3) node (a) [scale=0.75] {$a'$};
 				\draw (5.28,2.3) node (a) [scale=0.75] {$b'$};
 				\draw (2.25,6.2) node (a) [scale=1] {$\beta$};
 				\draw (1.75,6.2) node (a) [scale=1] {$\beta'$};
 				\draw (-0.05,2.25) node (a) [scale=1] {$\alpha$};
 				\draw (-0.08,1.75) node (a) [scale=1] {$\alpha'$};
 			\end{tikzpicture}
 			\caption{Equivariant saddle on a grid diagram representing simultaneously $\G$ and $\G'$.}
 			\label{fig:croce_saddle}
 		\end{figure}
 		As shown in \cite[Proposition 3.1]{baldwin2022lagrangian}, we can assume that $\G$ and $\G'$ differ only on a $2$-rows $\times$ $2$-columns cross, as in Figure~\ref{fig:croce_saddle}. Observe how the exchanged markings could be $O$'s instead. In \cite{baldwin2022lagrangian}, this case is addressed separately; however, this is due to the Legendrian grid moves constraints. Up to equivariant grid stabilization, we can assume that the saddle always switches $X$-markings. 
 		We can depict both $\G$ and $\G'$ on the same grid diagram at the same time; see Figure~\ref{fig:simultanea}. We call $\alpha$ and $\beta$ the horizontal and vertical circles belonging to $\G$ but not to $\G'$, and we use $\alpha'$, $\beta'$ for the viceversa. We call $a$, $b$ the intersection points of $\alpha$ with $\alpha'$, and $a'=\rho(a)$, $b'=\rho(b)$ the intersection points of $\beta'$ with $\beta'$, as shown in the figure.
 		Again, the relative positions of the $X$-markings we switch are not restrictive, up to \si grid moves.
 		
 		Consider the pentagon counting map $\phi$ defined in \cite[Section 3.1.1]{baldwin2022lagrangian}. To be precise, since we work with the unblocked flavor, we should refer to \cite[Equation 5.2]{ozsvath2015grid}. We define the map $\phi:\gcc \rightarrow \gc$ as the composition of the map counting pentagons in  $a$ with the map counting pentagons in $a'$: $\phi:= \phi_{a'}\circ \phi_{a}$. 
 		Observe that \cite[Lemma 5.1.4]{ozsvath2015grid} implies that also $\phi$ is a chain map, while the statement on the bidegree is a straightforward computation, as in \cite[Lemma 3.4]{baldwin2022lagrangian}. The proof that the canonical states of $\G'$ are sent to the canonical states of $\G$ follows from the observation in \cite[Lemma 3.3]{baldwin2022lagrangian}. We only have to show that the map $\phi$ is $\rho$-equivariant.
 		
 		Start by noticing that the conjugated $\rho\phi\rho$ coincides with $\phi_{a}\phi_{a'}$, so the thesis is equivalent to showing that $\phi_{a'}\phi_{a}+\phi_{a}\phi_{a'}$ is null homotopic. To do so, we must consider a hexagon counting map $H$ on the combined diagram, in analogy with the one in \cite[Equation 5.8]{ozsvath2015grid}. The main difference will be the shape of the hexagons we want to count.
 		Fix grid states $\xx \in \Stt$ and $\yy \in \St$. We call a \emph{hexagon} from $\xx$ to $\yy$ an embedded disk $h$ in the combined torus, whose boundary is in the union of the horizontal and vertical circles and such to satisfy the following conditions:
 		\begin{itemize}
 			\item At any of the six corner points $c$ of $h$, the hexagon contains exactly one of the four quadrants determined by the two intersecting curves at $c$.
 			\item Four of the corner points are in $\xx \cup \yy$, and the remaining two are $a$ and $a'$.
 			\item $\partial(\partial_\alpha h) = y - x$.
 		\end{itemize}    
 		We denote the set of hexagons from $\xx$ to $\yy$ by $\text{Hex}(\xx,\yy)$. A hexagon $h\in \text{Hex}(\xx,\yy)$ is \emph{empty} if $\text{Int}(h)\cap \xx = \text{Int}(h)\cap \yy = \emptyset$. We denote by $\text{Hex}^\circ(\xx,\yy)$ such hexagons.
 		Consider the $\F[V_1,\dot,V_n]$-module homomorphism $H:\gcc \rightarrow \gc$ defined on the states $\xx\in\Stt$ as:
 		\[ H(\xx) = \sum_{\yy\in\St} \sum_{\{ h \in \text{Hex}^\circ(\xx,\yy) \;|\; h\cap\X = \emptyset \}} V_1^{O_1(h)}\dots V_n^{O_n(h)} \cdot \yy. \]
 		We claim that $H$ provides a chain homotopy from $\phi_{a'}\phi_{a}$ to $\phi_{a}\phi_{a'}$.
 		
 		For a domain $\psi \in \pi(\xx,\yy)$ call $N(\psi)$ the number of ways $\psi$ decomposes as either:
 		\begin{itemize}
 			\item $r*h$, where $r$ is an empty rectangle and $h$ is an empty hexagon;
 			\item $h*r$, where $h$ is an empty hexagon and $r$ is an empty rectangle;
 			\item $p*p'$, where $p$ is an empty pentagon with a corner in $a$ and $p'$ is an empty pentagon with a corner in $a'$;
 			\item $p'*p$, where $p'$ is an empty pentagon with a corner in $a'$ and $p$ is an empty pentagon with a corner in $a$.
 		\end{itemize}    
 		Clearly, $\phi_{a'}\phi_{a}+\phi_{a}\phi_{a'} + \partial H + H \partial$ applied on a state $\xx\in\Stt$ coincides with:
 		\[ \sum_{\yy\in\St} \sum_{\psi\in \pi(\xx,\yy)} N(\psi) V_1^{O_1(\psi)}\dots V_n^{O_n(\psi)} \cdot \yy. \]
 		The proof ends showing that the sum above is zero. The strategies and cases are analogous to those in the proof of \cite[Lemma 5.1.6]{ozsvath2015grid}.
 		\end{proof}
 \end{lemma}
 The difficulty in proving Conjecture~\ref{congettura} lies in the remaining cases, which include the birth of an unknotted component along the axis, the birth of a $\rho$-equivariant pair of unknotted components, and a symmetric saddle move on the axis.
 It is not necessary to construct a map for every remaining case. 
 For instance, constructing a homogeneous birth-induced map of bidegree $(1,1)$ or $(2,2)$, depending on the number of components added, is sufficient. In fact, a saddle along the axis can always be substituted with a $\rho$-equivariant pair of saddles and a birth or death along the axis.
 By a similar argument, it suffices to construct the morphism in the case of a saddle on the axis and of a $\rho$-equivariant pair of births.
 
 As regards birth moves, no analogue is found in \cite{baldwin2022lagrangian}. In fact, the birth maps defined there correspond, in our setting, to death maps. However, such maps are not considered, since the authors work with Lagrangian cobordisms. We were unable to identify a suitable candidate map, so this case remains open.
 
 Let $\G'$ be a symmetric grid obtained from a symmetric grid $\G$ through a saddle move on the axis, as in Figure~\ref{fig:sad_segm_2}. Starting from the pentagon (or triangle) counting chain map defined in \cite{baldwin2022lagrangian}, one constructs the map in the minus case:
 \[\phi: \gc \rightarrow \gcc.\]
 The only remaining property to verify is $\rho$-equivariance. 
 In defining the map $\phi$, it is assumed that $\G$ and $\G'$ differ by a horizontal circle. Pentagons, or triangles, are counted on a grid $\overline{\G}$ that simultaneously represents $\G$ and $\G'$, and thus contains two horizontal circles intersecting in two points. Then, the map $\rho \circ \phi \circ \rho$ is the pentagon, or triangle, counting map on the grid $\rho\overline{\G}$, which still represents $\G$ and $\G'$ simultaneously, but now regarded as differing by a vertical circle. This is possible because the saddle lies on the symmetry axis. In this setting, the computations are less straightforward and more delicate, and to date we have not been able to complete them.


\bibliographystyle{amsalpha}
\bibliography{Bibliography.bib}

@article{manolescu2025rasmussen,
	title={A {R}asmussen invariant for links in $\mathbb{RP}^3$},
	author={Manolescu, Ciprian and Willis, Michael},
	journal={Transactions of the American Mathematical Society, Series B},
	volume={12},
	number={21},
	pages={789--830},
	year={2025}
}

@article{Zemke2019,
	author  = {Zemke, Ian},
	title   = {Link cobordisms and functoriality in link {F}loer homology},
	journal = {Journal of Topology},
	volume  = {12},
	number  = {1},
	year    = {2019},
	pages   = {94--220},
}

@article{hendricks2017involutive,
	title={Involutive {H}eegaard {F}loer homology},
	author={Hendricks, Kristen and Manolescu, Ciprian},
	journal={arXiv preprint arXiv:1507.00383},
	year={2017}
}

@article{xiao2026real,
	title={{Real link Floer homology}},
	author={Xiao, Yonghan},
	journal={arXiv preprint arXiv:2604.21240},
	year={2026}
}

@article{parikh2024localization,
	title={{Localization and the Floer homology of strongly invertible knots}},
	author={Parikh, Aakash},
	journal={arXiv preprint arXiv:2408.13892},
	year={2024}
}

@article{hendricks2025note,
	title={{A note on real Heegaard Floer homology and localization}},
	author={Hendricks, Kristen},
	journal={arXiv preprint arXiv:2508.03897},
	year={2025}
}

@article{dioguz,
	title={Equivariant Q-sliceness of strongly invertible knots},
	author={Di Prisa, Alessio and {\c{S}}avk, O{\u{g}}uz},
	journal={Trans. Amer. Math. Soc},
	pages={8},
	year={2025}
}

@article{jua_thu_zem,
	title = "Naturality and Mapping Class Groups in {H}eegard {F}loer Homology",
	author = "Andr{\'a}s Juh{\'a}sz and Thurston, {Dylan P.} and Ian Zemke",
	note = "Publisher Copyright: {\textcopyright} 2021 American Mathematical Society.",
	year = "2021",
	month = sep,
	volume = "273",
	pages = "1--185",
	journal = "Memoirs of the American Mathematical Society",
	publisher = "American Mathematical Society",
	number = "1338",
}

@article{sarkar2015moving,
	title={Moving basepoints and the induced automorphisms of link {F}loer homology},
	author={Sarkar, Sucharit},
	journal={Algebraic \& Geometric Topology},
	volume={15},
	number={5},
	pages={2479--2515},
	year={2015},
	publisher={Mathematical Sciences Publishers}
}

@article{dipriframba1,
	title={Solvability of concordance groups and {Milnor} invariants},
	author={Di Prisa, Alessio and Framba, Giovanni},
	journal={Revista Matem\'atica Iberoamericana},
	year={2025},
	volume = {41},
	number = {5},
	pages = {1957-1972}
}

@article{dipriframba2,
	title={A new invariant of equivariant concordance and results on 2-bridge knots},
	author={Di Prisa, Alessio and Framba, Giovanni},
	journal={Algebraic \& Geometric Topology},
	year={2025},
	volume = {25},
	number = {2},
	pages = {1117-1132}
}

@article{alfieri2021strongly,
	title={Strongly invertible knots, invariant surfaces, and the {A}tiyah--{S}inger signature theorem},
	author={Alfieri, Antonio and Boyle, Keegan},
	journal={Michigan Mathematical Journal},
	volume={74},
	number={4},
	pages={845--861},
	year={2024},
	publisher={University of Michigan, Department of Mathematics}
}

@article{snape,
	title={Homological invariants of strongly invertible knots}, 
	author={Snape, Michael},
	year={2018},
	journal={PhD thesis, University of Glasgow},
}

@article{boyle2021equivariant,
	author = {Boyle, Keegan and Issa, Ahmad},
	title = {Equivariant 4-genera of strongly invertible and periodic knots},
	journal = {Journal of Topology},
	volume = {15},
	number = {3},
	pages = {1635-1674},
	year = {2022}
}

@article{sakuma,
	author = {Sakuma, Makoto},
	year = {1986},
	month = {01},
	pages = {},
	title = {On strongly invertible knots},
	journal = {Algebraic and topological theories (Kinosaki, 1984)}
}

@Book{smith,
	author = { Bass, Hyman and Morgan, John W.},
	title = { The {S}mith conjecture},
	publisher = { Academic Press Orlando [Fla.] },
	pages = { xv, 243 p. : },
	year = { 1984 },
}

@article{collarilisca,
	title={{Strongly Invertible Legendrian Links}},
	author={Collari, Carlo and Lisca, Paolo},
	journal={arXiv preprint arXiv:2311.07974},
	year={2023}
}

@article{manolescuoz,
	title={On combinatorial link {F}loer homology},
	author={Manolescu, Ciprian and Ozsv{\'a}th, Peter and Szab{\'o}, Zolt{\'a}n and Thurston, Dylan P},
	journal={Geometry \& Topology},
	volume={11},
	number={4},
	pages={2339--2412},
	year={2007},
	publisher={Mathematical Sciences Publishers}
}

@article{sano2024involutive,
	author    = {Taketo Sano},
	title     = {Involutive {K}hovanov homology and equivariant knots},
	journal   = {Algebraic \& Geometric Topology},
	volume    = {25},
	number    = {8},
	pages     = {5059--5111},
	year      = {2025},
}

@article{miller2023strongly,
	title={Strongly invertible knots, equivariant slice genera, and an equivariant algebraic concordance group},
	author={Miller, Allison N. and Powell, Mark},
	journal={Journal of the London Mathematical Society},
	volume={107},
	number={6},
	pages={2025--2053},
	year={2023},
	publisher={Wiley Online Library}
}

@phdthesis{GrahamThesis,
	author = {Graham, David},
	title  = {Studying Surfaces in 4-Dimensional Space Using
	Combinatorial Knot {F}loer Homology},
	school = {Brandeis University},
	year   = {2012}
}

@article{baldwin2022lagrangian,
	title={Lagrangian cobordisms and Legendrian invariants in knot {F}loer homology},
	author={Baldwin, John A and Lidman, Tye and Wong, C-M Michael},
	journal={Michigan Mathematical Journal},
	volume={71},
	number={1},
	pages={145--175},
	year={2022},
	publisher={University of Michigan, Department of Mathematics}
}

@article{hirasawa2022invariant,
	title={Invariant {S}eifert surfaces for strongly invertible knots},
	author={Hirasawa, Mikami and Hiura, Ryota and Sakuma, Makoto},
	journal={Essays in geometry--dedicated to Norbert A’Campo},
	pages={325--349},
	year={2023}
}

@article{lobb2021refinement,
	title={A refinement of {K}hovanov homology},
	author={Lobb, Andrew and Watson, Liam},
	journal={Geometry \& Topology},
	volume={25},
	number={4},
	pages={1861--1917},
	year={2021},
	publisher={Mathematical Sciences Publishers}
}

@article{lamm2022symmetric,
	title={Symmetric diagrams for all strongly invertible knots up to 10 crossings},
	author={Lamm, Christoph},
	journal={arXiv preprint arXiv:2210.13198},
	year={2022}
}

@article{boyle2024equivariant,
	title={Equivariant unknotting numbers of strongly invertible knots},
	author={Boyle, Keegan and Chen, Wenzhao},
	journal={arXiv preprint arXiv:2412.09797},
	year={2024}
}

@article{BarNatan2005,
	author  = {Bar-Natan, Dror},
	title   = {Khovanov's homology for tangles and cobordisms},
	journal = {Geometry \& Topology},
	volume  = {9},
	year    = {2005},
	pages   = {1443--1499}
}

@article{KFfour-ball,
	author = {Peter Ozsv{\'a}th and Zolt{\'a}n Szab{\'o}},
	title = {{Knot {F}loer homology and the four-ball genus}},
	volume = {7},
	journal = {Geometry \& Topology},
	number = {2},
	publisher = {MSP},
	pages = {615 -- 639},
	year = {2003},
}

@book{ozsvath2015grid,
	title={Grid homology for knots and links},
	author={Ozsv{\'a}th, Peter S and Stipsicz, Andr{\'a}s I and Szab{\'o}, Zolt{\'a}n},
	volume={208},
	year={2015},
	publisher={American Mathematical Soc.}
}

@article{CROMWELL199537,
	title = {Embedding knots and links in an open book I: {B}asic properties},
	journal = {Topology and its Applications},
	volume = {64},
	pages = {37-58},
	year = {1995},
	author = {Peter R. Cromwell},}

@article{tait,
	title = {On knots I, II, III},
	journal = {Scientific papers},
	volume = {1},
	pages={273-347},
	year = {1898},
	publisher={Cambridge},
	author = {Peter G. Tait},}

@article{legendriangridinvaria,
	title={Legendrian knots, transverse knots and combinatorial {F}loer homology},
	author={Ozsv{\'a}th, Peter and Szab{\'o}, Zolt{\'a}n and Thurston, Dylan P},
	journal={Geometry \& Topology},
	volume={12},
	number={2},
	pages={941--980},
	year={2008},
	publisher={Mathematical Sciences Publishers}
}

@article{Juhasz2008,
	author  = {Juhász, András},
	title   = {Cobordisms of sutured manifolds},
	journal = {Advances in Mathematics},
	volume  = {219},
	year    = {2008},
	pages   = {1361--1430}
}

@article{khovanov2025symmetries,
	author = {Mikhail Khovanov and Taketo Sano},
	title = {Symmetries of equivariant Khovanov homology},
	journal = {arXiv:2509.03785},
	year = {2025}
}

@incollection{lipshitz2022khovanov,
	author       = {Robert Lipshitz and Sucharit Sarkar},
	title        = {Khovanov homology of strongly invertible knots and their quotients},
	booktitle    = {Frontiers in Geometry and Topology},
	series       = {Proceedings of Symposia in Pure Mathematics},
	volume       = {109},
	publisher    = {American Mathematical Society},
	year         = {2024},
	pages        = {157--182},
}

@article{di2023equivariant,
	title={Equivariant algebraic concordance of strongly invertible knots},
	author={Di Prisa, Alessio},
	journal={Journal of Topology},
	volume={17},
	number={4},
	pages={article no. e70006},
	year={2024},
}

@article{boyle2023classification,
	title={A classification of symmetries of knots},
	author={Boyle, Keegan and Rouse, Nicholas and Williams, Ben},
	journal={arXiv preprint arXiv:2306.04812},
	year={2023}
}

@article{Sarkar2011,
	author  = {Sarkar, Sucharit},
	title   = {Grid diagrams and the {Ozsváth--Szabó} tau-invariant},
	journal = {Mathematical Research Letters},
	volume  = {18},
	year    = {2011},
	pages   = {1239--1257}
}

@article{Mallick2022,
	author  = {Abhishek Mallick},
	title   = {Knot {F}loer homology and surgery on equivariant knots},
	journal = {J. Topol.},
	year    = {2024}
}

@article{dai_mallick_stoffregen,
	title={Equivariant knots and knot {Floer} homology},
	author={Dai, Irving and Mallick, Abhishek and Stoffregen, Matthew},
	journal={Journal of Topology},
	volume={16},
	number={3},
	pages={1167--1236},
	year={2023},
	publisher={Wiley Online Library}
}

@article{JuhaszMarengon2018,
	author  = {Juhász, András and Marengon, Marco},
	title   = {Cobordisms of decorated links},
	journal = {Geometry \& Topology},
	volume  = {22},
	year    = {2018},
	pages   = {2915--3000}
}
\end{document}